\theoremstyle{plain}
\newtheorem{theorem}{Theorem}
\newtheorem{lemma}[theorem]{Lemma}
\newtheorem{corollary}[theorem]{Corollary}
\newtheorem{proposition}[theorem]{Proposition}
\theoremstyle{definition}
\newtheorem{definition}[theorem]{Definition}
\newtheorem{remark}[theorem]{Remark}
\newtheorem{comment}{Commment}
\theoremstyle{remark}
\DeclareMathOperator{\Lip}{Lip}
\DeclareMathOperator{\dom}{dom}
\def\bq{\begin{eqnarray}}
\def\eq{\end{eqnarray}}
\def\bqq{\begin{eqnarray*}}
\def\eqq{\end{eqnarray*}}
\def\nn{\nonumber}
\def\eps{\varepsilon}
\def\N{\mathbb{N}}
\def\R{\mathbb{R}}
\def\cF {\mathcal{F}}
\def\cR {\mathbb{R}}
\def\cP {\mathcal{P}}
\title{Some results of the Lipschitz constant of 1-Field on $\R^n$}
\author{Erwan Y. Le Gruyer \thanks{   E. Y. Le Gruyer,  INSA de Rennes \& IRMAR, 
20, Avenue des Buttes de Co\"{e}smes, 
CS 70839 F - 35708 Rennes Cedex 7 , France,
(Erwan.Le-Gruyer@insa-rennes.fr)} 
~and~  Thanh-Viet Phan \thanks{T.V. Phan,  INSA de Rennes \& IRMAR, 
20, Avenue des Buttes de Co\"{e}smes, 
CS 70839 F - 35708 Rennes Cedex 7 , France,
(Thanh-Viet.Phan@insa-rennes.fr)     }}
\begin{document}
\maketitle
\begin{abstract}
We study the relations between the Lipschitz constant of $1$-field introduced in \cite{ELG1} and the Lipschitz constant of the gradient canonically associated with this $1$-field. Moreover, we produce two explicite formulas that make up Minimal Lipschitz extensions for $1$-field. 
As consequence of the previous results, for the problem of minimal extension by continuous functions from $\R^m$ to $\R^n$, we also produce analogous explicite formulas to those of  Bauschke and Wang (see \cite{Ba}).
 Finally, we show that Wells's extensions of $1$-field are absolutely minimal Lipschitz extension  when the domain of $1$-field to expand is finite. We provide a counter-example showing that this result is false in general.
\end{abstract}
\noindent\textbf{Mathematics Subject Classification.} 54C20, 58C25, 46T20, 49\\
\textbf{Key words.} Minimal, Lipschitz, Extension, Differentiable Function, Convex Analysis

\section{Introduction}

Fix $n \in \N$, $n\ge 1$. Let $\Omega$ be a non-empty subset of Euclidean space $\cR^n$. We denote $\left\langle , \right\rangle$ the standard scalar product in $\R^n$. Let $\cP^1(\mathbb{R}^n,\mathbb{R})$ be the set of first degree polynomials mapping $\mathbb{R}^n$ to $\mathbb{R}$, i.e 
$$
\cP^1(\R^n,\R)\triangleq\{P:a\in\R^n\mapsto P(a)=p+\left\langle v,a\right\rangle,where \text{ }p\in\R,v\in \R^n\}.
$$
Let us consider a $1$-field $F$  on domain $\dom(F)  \triangleq \Omega$ defined by
\begin{align}\label{def-field}
F: \, &\Omega \rightarrow \cP^1(\R^n,\R) \nn\\
&x \mapsto F(x)(a) \triangleq f_x + \langle D_xf ; a - x\rangle,
\end{align}
where $a \in \R^n$ is the evaluation variable of the polynomial $F(x)$ and $x\in\Omega \mapsto f_x \in \R$, $x\in\Omega \mapsto D_xf \in \R^n$ are mappings associated with $F$. We will always use capital letters to denote the $1$-field and small letters to denote these mappings.

The Lipschitz constant of $F$ introduced in \cite{ELG1} is 
\bq\label{dncuaGamma}
 \Gamma^1(F;\Omega) \triangleq \sup_{\substack{x,y \in \Omega \\ x
    \neq y}} \Gamma^1(F;x,y),
\eq
where
\bq
 \Gamma^1(F;x,y) \triangleq 2 \sup_{a \in \R^n} \frac{|F(x)(a) -
  F(y)(a)|}{\|x-a\|^2 + \|y-a\|^2}. 
\eq

If $\Gamma^1(F;\Omega)< +\infty$, then the Whitney's conditions \cite{Whitney2}, \cite{Glaeser1} are verified and the 1-field $F$ can be extended in $\R^n$: there exists $g\in \mathcal{C}^1(\cR^n,\cR)$ such that $g(x)=f_x$ and $\nabla g(x)=D_xf$ for all $x\in \Omega$ where $\nabla g$ is the usual gradient.
Moreover, from \cite[Theorem 2.6]{ELG1} we can find $g$ which satisfies $$\Gamma^1(G;\R^n)=\Gamma^1(F;\Omega),$$
where $G$ is the 1-field associated to $g$, i.e
$$G(x)(y)=g(x)+\left\langle \nabla  g(x),y-x \right\rangle,~ \forall x,y\in \Omega.$$
It means that the Lipschitz constant does not increase when extending $F$ by $G$. We say that $G$ is a minimal Lipschitz extension (MLE for short) of $F$ and we have
\begin{align*}
 \Gamma^1(G;\R^n)= {\rm inf} \{\Lip(\nabla h,\R^n)~:~  h(x) = f_x,\nabla h(x)=D_xf, x\in\Omega,~ h \in \mathcal{C}^{1,1}(\R^n,\R) \},
\end{align*}
where the notation $\Lip(u,.)$ means that 

\bq\label{dncualip}
 {\rm Lip}(u;x,y)  \triangleq   \frac{\|u(x)-u(y)\|}{\|x-y\|},~x \neq y \in \Omega,   \mbox{ and }    {\rm Lip}(u;\Omega)\triangleq\sup\limits_{x\ne y\in\Omega} {\rm Lip}(u;x,y).
\eq


It is worth asking what is it the relationship between $\Gamma^1(F,\Omega)$ and $  {\rm Lip}(Df,\Omega)$ ? From \cite{ELG1}, we know that $  {\rm Lip}(Df,\Omega)\le \Gamma^1(F,\Omega)$. In special case $\Omega=\cR^n$ we have $  {\rm Lip}(Df,\cR^n)= \Gamma^1(F,\cR^n)$ but in general the formula $ {\rm Lip}(Df,\Omega)= \Gamma^1(F,\Omega)$ is untrue.   
In this paper we will prove that if 
$\Omega$ is an non empty open subset of $\R^n$ then
\begin{equation}
  \Gamma^1(F,\Omega) = {\rm max}\{\Gamma^1(F, \partial\Omega),  {\rm Lip}(Df,\Omega) \},
\end{equation}
where  $\partial \Omega$ is a boundary of $\Omega$.\\
Moreover if $\Omega$ is a convex subset of $\R^n$ then
\begin{equation}
  \Gamma^1(F,\Omega) \leq 2    {\rm Lip}(Df,\Omega).
\end{equation}
Knowing the set of uniqueness of minimal extensions of a taylonian  biponctual field, allows  better understand the relation between $\Gamma^1(F)$ and ${\rm Lip}(Df)$.
For further more details see Section \ref{chap 2}.

In Section \ref{chap 3}, we present two MLEs $U^+$ and $U^-$  of F  of the form
$$
U^+: \,  x \in \R^n  \mapsto U^+(x)(a) \triangleq u^+(x) + \langle D_xu^+ ; a - x\rangle, a \in \R^n.
$$
where

$$
u^+(x)  \triangleq \max\limits_{v\in\Lambda_x}\inf\limits_{a\in\Omega} \Psi^+(F,x,a,v),~~~ D_xu^+\triangleq \mbox{arg}\displaystyle\max_{v\in\Lambda_x}\inf\limits_{a\in\Omega} \Psi^+(F,x,a,v),
$$
and

$$
U^-: \,  x \in \R^n  \mapsto U^-(x)(a) \triangleq u^-(x) + \langle D_xu^- ; a - x\rangle,~ a \in \R^n,
$$
where

$$
u^-(x)  \triangleq \min\limits_{v\in\Lambda_x}\sup \limits_{a\in\Omega} \Psi^-(F,x,a,v),~~~ D_xu^-\triangleq \mbox{arg}\displaystyle\min_{v\in\Lambda_x}\sup\limits_{a\in\Omega} \Psi^-(F,x,a,v),
$$

where $\Lambda_x$ is a non empty and  convex set of $\R^n$, defined in Definition \ref{Def.14}.  
These maps and their gradients are explicit $\sup$-$\inf$ formulas that is to say they only depend on $F$.  In addition they are {\it extremal}  : the first is \textit{ over} and the second is \textit{ under }that is to say 

$$u^-(x)  \le g_x  \le  u^+(x),~ \forall x\in\cR^n,$$
for all MLE $G$ of $F$.

Now, we draw the connection between  the formula $u^+$ and the formula of Wells \cite{Well1}. From \cite[Theorem 2]{Well1}, we know that if $\kappa>0$ satisfies 
\bqq
f_y\le f_x+\frac{1}{2}\langle D_xf+D_yf,y-x\rangle +\frac{\kappa}{4}(x-y)^2-\frac{1}{4\kappa}(D_xf-D_yf)^2,\forall x,y\in\Omega,
\eqq
then there exists $w^+\in \mathcal{C}^{1,1}(\R^n,\R)$ such that $w^+(x)=f_x$, $\nabla w^+(x)=D_xf$ for all $x\in \Omega$, and $\Lip(\nabla w^+,\cR^n)\le \kappa$.

Further, if $g\in \mathcal{C}^{1,1}(\R^n,\R)$ 
with $g(x)=f_x$, $\nabla g(x)=D_xf$ for all $x\in \Omega$ and  $\Lip(\nabla g,\cR^n)\le \kappa$, then $g(x)\le w^+(x)$ for all $x\in \cR^n$.

The construction of Wells  $w^+$ is explicit when $\Omega$ is finite. The construction of Wells  extend to infinite $\Omega$ by passing to the limit but there is no explicit formula. In sections \ref{chap 3} and \ref{chap 6}, we will prove that if $\kappa$ is assigned the Lipschitz constant of the field $F$,  then $w^+$ is a MLE. Further, in this case $w^+$  is an over extremal extension of $F$ and $u^+=w^+$.  We find a function $w^-$ (similarly from the construction of $w^+$) and we have $u^-=w^-$. 

We pay attention  to the case when $\Omega$ is finite. Say it is interesting despite this restrictive assumption. In this case we have  explicit constructions of $w^\pm$. In section \ref{chap 5}, we will prove that $w^{\pm}$ are  absolutely minimal Lipschitz extensions (AMLEs for short) of $F$.
This means that for any bounded open $D$ satisfying $\overline{D}\subset  \cR^n\backslash \Omega$ we have 
 \begin{equation}
  \Gamma^1(w^{\pm},D) = \Gamma^1(w^{\pm},\partial D).
\end{equation}
These result give the existence of AMLEs of $F$ when $\Omega$ is finite. In general one dose not have uniqueness since it may happen $w^-< w^+$. In fact, we even have infinity solutions AMLE of $F$ (see Corollary \ref{Coro.infinity} ).

When $\Omega$ is infinite, $w^+$ and $w^-$ are extremal MLE, but in general are not AMLE of $F$. To prove this, we present, in section \ref{chap 5}, an example of mapping $F$ for which $w^+$ and $w^-$ are not AMLE of $F$. In this particular example, we can check that $\dfrac{1}{2}(w^++w^-)$ is the unique  AMLE of $F$, moreover this function is not  $\mathcal{C}^2$ although the domain $\Omega$ of $F$ is  regular and $F$  is a regular 1-field. 
The question of the existence of an AMLE remains an open and a difficult problem when $\Omega$ is  infinite  see \cite{ELG2} and the references therein.
 

In Section \ref{chap 4}, we explain how to use the previous ideas and methods to construct MLE of mappings from $\cR^m$ to $\cR^n$, i.e,  to solve the Kirszbraun-Valentine extension problem \cite{Kirs,Va}.  Let us define first $\mathcal{Q}_0$  as the problem of the minimum extension for Lipschitzian functions,  second  $\mathcal{Q}_1$  as the problem of the minimum extension for $1$-fields. Curiously, we will show that the problem  $\mathcal{Q}_0$ is a sub-problem of the problem   $\mathcal{Q}_1$. As a consequence, we obtain two explicit formulas see (\ref{Over-Kir2}) and (\ref{Under-Kir2}) that solve the problem  $\mathcal{Q}_0$. \\
The Bauschke-Wang result \cite{Ba} gives an explicite formula for the Kirsbraun-Valentine problem from $\R^m$ to $\R^n$. By our approach, we produce analogous formulas. Moreover, when the domain of the function  to extend is  finite, the result of Wells gives an explicite construction of minimal Lipschitz extensions.




\section{Preliminaries}
For any  $\Omega$ open subset of   $\cR^n$, denote by  $\mathcal{C}^{1,1}(\Omega,\cR)$ the set of all real-valued function $f$ that is differentiable on $\Omega$ and the differential  $\nabla  f$  is lipschitzian, that is ${\rm Lip}(\nabla f,\Omega) < +\infty$.\\

Let $\Omega$ be a subset of $\cR^n$. The $1$-field $F$ of domain $\Omega$  is define by  (see(\ref{def-field}))

\begin{align}
F: \, &\Omega \rightarrow \cP^1(\R^n,\R) \nn\\
&x \mapsto F(x)(a) = f_x + \langle D_xf ; a - x\rangle, a \in \R^n
\end{align}
with $f_x \in \R$ and $D_xf \in \R^n$, fror all $x \in \R^n$.

\begin{definition}
We call $F$ to be a Taylorian field on $\Omega$ if $F$ is a 1-field on $\Omega$ and $\Gamma^1(F,\Omega)<+\infty$. 
\end{definition}
Denote by  $\cF^1(\Omega)$  the set of all Taylorian fields on $\Omega$.

\textbf{Information and precision for the reader} : Let $\Omega$ be a subset of $\cR^n$ and  $F \in \cF^1(\Omega)$. Let us define the map $$f(x) \triangleq F(x)(x)=f_x,~ x \in \Omega.$$
Suppose that $\Omega$ is open, using \cite[Proposition 2.5]{ELG1} we have  $f \in 
\mathcal{C}^{1,1}(\Omega,\cR)$ and $\nabla f(x) =  D_xf$, $\forall x \in \Omega$.\\
Suppose that $\Omega$ is any subset of $\R^n$ then using  \cite[Theorem 2.6]{ELG1} there exists $\tilde{F} \in \cF^1(\R^n)$ which extends $F$. Moreover $\tilde{f} \in  \mathcal{C}^{1,1}(\R^n,\cR)$ and  $ \nabla f(x)  \triangleq  \nabla  \tilde{f}(x) =  D_xf$, $x \in \Omega$.
Conclusion : in all situations, we can  canonically associate  $ F $ and $ f $.  


Using the notation $V\subset\subset \Omega$ if $\overline{V}$ is compact and $\overline{V}\subset\Omega$.\\
Let $x,y\in\cR^n$. We define:
\bqq
B(x;r)\triangleq\{y\in \cR^n:\|y-x\|< r\}.
\eqq
\bqq
\overline{B(x;r)}\triangleq\{y\in \cR^n:\|y-x\|\le r\}.
\eqq
$B_{1/2}(x,y)$ is the closed ball of center $\frac{x+y}{2}$ and radius $\frac{\|x-y\|}{2}$.\\
The line segment joining two points $x$ and $y$ is denote by $[x,y]$, i.e $[x,y]\triangleq\{tx+(1-t)y:0\le t\le 1\}$. \\
The $|$ symbol designates in restriction to.

\begin{definition} Let $\Omega$ be a subset of $\mathbb{R}^n$ and let $F\in \cF^1(\Omega)$. For any $a\neq b \in \Omega$, we define
\bqq
A_{a,b}(F)&\triangleq& \frac{2(f_a-f_b)+\left\langle{D_af+D_bf,b-a}\right\rangle}{\left\| {a-b} \right\|^2}. \hfill\\
B_{a,b}(F) &\triangleq& \frac{\left\| {D_af-D_bf} \right\|}{\left\| {a-b} \right\|} .\hfill\\
\eqq
\end{definition}

\begin{definition}\label{def:quantrongnhat}
Let $\Omega_1\subset \Omega_2\subset \cR^n$ and $F\in\cF^1(\Omega_1)$.

We call $G\in\cF^1(\Omega_2)$ a {\it extension} of $F$ on $\Omega_2$ if $G(x)=F(x)$ for $x\in\Omega_1$.

We say that $G\in\cF^1(\Omega_2)$ is a {\it minimal Lipschitz extension} (MLE) of $F$ on $\Omega_2$ if $G$ is an extension of $F$ on $\Omega_2$ and $$\Gamma^1(G;\Omega_2)=\Gamma^1(F;\Omega_1).$$

We say that  $G_1\in\cF^1(\Omega_2)$ is an {\it over extremal Lipschitz extension} (over extremal for short) and   $G_2$ is an   {\it under extremal Lipschitz extension} of $F$   on $\Omega_2$  if $G_1$ and $G_2$ are MLEs of $F$ on $\Omega_2$ and 
$$
g_2(x) \leq k(x) \leq g_1(x),~ x \in \Omega_2,
$$
for all $K$  MLE of $F$.

We say that $G\in\cF^1(\Omega_2)$ is an {\it absolutely minimal Lipschitz extension} (AMLE) of $F$ on $\Omega_2$ if $G$ is a MLE of $F$ on $\Omega_2$ and
 $$\Gamma^1(G;V)=\Gamma^1(F;\partial V), $$
for any bounded open V satisfying $\overline{V}\subset \Omega_2\backslash \Omega_1$.

\end{definition}

We recall some results in \cite{ELG1} that will be useful in sections \ref{chap 2} and \ref{chap 3}:
\begin{proposition} \cite[Proposition 2.2 and remark 2.3]{ELG1}  \label{pro:halAB} Let $\Omega$ be a subset of $\mathbb{R}^n$ and let $F\in \cF^1(\Omega)$ then for any $a, b\in \Omega, a \ne b$ we have
\bqq
\Gamma^1(F;a,b)&=&\sqrt{A_{a,b}(F)^2+B_{a,b}(F)^2}+\left| A_{a,b}(F)\right|=2\sup\limits_{y\in B_{1/2}(a,b)}\frac{F(a)(y)-F(b)(y)}{\|a-y\|^2+\|b-y\|^2}.\\
\eqq
\end{proposition}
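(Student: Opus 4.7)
The plan is to reduce both expressions to a single one‑dimensional optimization by changing variables to the midpoint frame, then compute the maximum explicitly.

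\smallskip

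\noindent\textbf{Step 1: Midpoint parameterization.} I would parameterize the evaluation point as $y=\tfrac{a+b}{2}+z$ with $z\in\R^n$. A direct expansion gives
\begin{equation*}
 \|y-a\|^2+\|y-b\|^2 \;=\; \tfrac{1}{2}\|a-b\|^2+2\|z\|^2,
\end{equation*}
while the numerator
\begin{equation*}
 F(a)(y)-F(b)(y) \;=\; (f_a-f_b)+\langle D_af,\, y-a\rangle-\langle D_bf,\, y-b\rangle
\end{equation*}
collapses (after grouping the $\tfrac{a-b}{2}$ contributions from $y-a$ and $y-b$) to
\begin{equation*}
 F(a)(y)-F(b)(y) \;=\; \tfrac{1}{2}\|a-b\|^2\, A_{a,b}(F) + \langle D_af-D_bf,\, z\rangle.
\end{equation*}

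\smallskip

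\noindent\textbf{Step 2: One-variable reduction.} Writing $L=\|a-b\|$, $\alpha=A_{a,b}(F)$, $\beta=\|D_af-D_bf\|$, the problem becomes
\begin{equation*}
 \tfrac12\Gamma^1(F;a,b) \;=\; \sup_{z\in\R^n}\frac{|\tfrac{L^2}{2}\alpha+\langle D_af-D_bf,z\rangle|}{\tfrac{L^2}{2}+2\|z\|^2}.
\end{equation*}
For fixed $r=\|z\|$, Cauchy--Schwarz (and aligning $z$ with $\pm(D_af-D_bf)$) gives a maximal numerator of $\tfrac{L^2}{2}|\alpha|+r\beta$, so the problem becomes the scalar optimization
\begin{equation*}
 \phi(r) \;=\; \frac{\tfrac{L^2}{2}|\alpha|+r\beta}{\tfrac{L^2}{2}+2r^2},\qquad r\ge 0.
\end{equation*}

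\smallskip

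\noindent\textbf{Step 3: Explicit maximization.} Setting $\phi'(r)=0$ yields the quadratic $2r^2+2L^2|\alpha| r - \tfrac{L^2\beta^2}{2\beta}\cdot\beta=0$; after simplification the positive root is $r^{*}=\tfrac{-L^2|\alpha|+\sqrt{L^4\alpha^2+L^2\beta^2}}{2\beta}$ (in the degenerate case $\beta=0$ the function $\phi$ is maximized at $r=0$ giving $|\alpha|$, which matches). Substituting back and using the identity $2(r^{*})^2 = \tfrac{L^2}{2}-\tfrac{4|\alpha|L^2}{2\beta}r^{*}$ to simplify, I expect the denominator and numerator to combine into
\begin{equation*}
 \phi(r^{*}) \;=\; \tfrac{1}{2}\!\left(\sqrt{\alpha^{2}+\beta^{2}/L^{2}}+|\alpha|\right) \;=\; \tfrac{1}{2}\!\left(\sqrt{A_{a,b}(F)^{2}+B_{a,b}(F)^{2}}+|A_{a,b}(F)|\right).
\end{equation*}
Doubling gives the first equality in the statement.

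\smallskip

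\noindent\textbf{Step 4: Restriction to $B_{1/2}(a,b)$.} A point $y=\tfrac{a+b}{2}+z$ lies in $B_{1/2}(a,b)$ iff $\|z\|\le L/2$. It therefore suffices to verify $r^{*}\le L/2$. This reduces to $\sqrt{L^{4}\alpha^{2}+L^{2}\beta^{2}}\le L\beta+L^{2}|\alpha|$, which squares to the trivially true $0\le 2L^{3}|\alpha|\beta$. So the supremum over $\R^n$ is attained inside $B_{1/2}(a,b)$. Finally, the formula $\sqrt{A^{2}+B^{2}}+|A|$ is invariant under the swap $a\leftrightarrow b$ (since $A_{a,b}(F)=-A_{b,a}(F)$), so the same value is obtained whether one takes $F(a)(y)-F(b)(y)$ or $|F(a)(y)-F(b)(y)|$ in the supremum, justifying the omission of the absolute value in the second formulation.

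\smallskip

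\noindent\textbf{Main obstacle.} There is no conceptual difficulty; the entire proof is a careful but routine algebraic calculation. The only delicate point is keeping track of signs and verifying that the optimizer $r^{*}$ actually falls within the closed ball $B_{1/2}(a,b)$, which is what lets one pass from a sup over $\R^{n}$ to a sup over a compact set.
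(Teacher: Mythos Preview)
The paper does not prove this proposition; it is quoted verbatim from \cite{ELG1} (Proposition~2.2 and Remark~2.3) and used as a black box throughout. So there is no ``paper's own proof'' to compare against. Your direct computational argument is essentially correct and is exactly the kind of calculation one would expect lies behind the cited result: pass to midpoint coordinates, reduce to a one-variable optimization via Cauchy--Schwarz, solve the resulting rational function explicitly, and check the maximizer lies in $B_{1/2}(a,b)$.

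Two small points. First, the quadratic you wrote in Step~3 has a typo (the constant term should be $-L^{2}/2$ after dividing by $\beta$, not $-L^{2}\beta/2$), but the root $r^{*}$ you then state is the correct one for the correct equation, so this is cosmetic. Second, your final remark about dropping the absolute value is not quite right as stated: if $A_{a,b}(F)<0$ and $B_{a,b}(F)=0$, the numerator $F(a)(y)-F(b)(y)$ is strictly negative for every $y$, so the unrestricted sup without absolute value cannot equal $2|A_{a,b}(F)|>0$. What is true is that the second displayed formula (without $|\cdot|$) holds once one orders the pair so that $A_{a,b}(F)\ge 0$; your symmetry observation $A_{b,a}=-A_{a,b}$ is exactly what justifies that reordering. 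With that caveat made explicit, the argument is complete.
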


\begin{theorem}\cite[Theorem 2.6]{ELG1} \label{theo:moronggafa} Let $\Omega_1\subset\Omega_2\subset \R^n$ and let $F\in \cF^1(\Omega_1)$ then there exists a MLE $G\in \cF^1(\Omega_2)$ of $F$ on $\Omega_2$.  
\end{theorem}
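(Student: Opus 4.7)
The plan is to prove Theorem~\ref{theo:moronggafa} by a Zorn's lemma argument that reduces everything to a one-point extension lemma. Set $k \triangleq \Gamma^1(F;\Omega_1)$ once and for all. Consider the poset of all pairs $(\Omega',F')$ with $\Omega_1\subset\Omega'\subset\Omega_2$, $F'\in\cF^1(\Omega')$ extending $F$, and $\Gamma^1(F';\Omega')=k$, ordered by inclusion of domains together with agreement of the field. Chains admit upper bounds: the union of the domains carries a well-defined 1-field since values agree on overlaps, and $\Gamma^1$ of the union is $k$ because each $\Gamma^1(\cdot;x,y)$ is computed from a pair of points that sits inside a single element of the chain. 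Zorn then yields a maximal element $(\Omega^\star,F^\star)$, and the proof reduces to showing that if $x\in\Omega_2\setminus\Omega^\star$ then the one-point extension lemma below contradicts maximality.

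The one-point extension lemma is set up as a convex feasibility problem. Writing the unknowns as $(p,v)\in\R\times\R^n$ (the candidate $f_x$ and $D_xf$), Proposition~\ref{pro:halAB} tells us that $\Gamma^1(\tilde F;a,x)\le k$ for a given $a\in\Omega^\star$ is equivalent to $\sqrt{A_{a,x}^{2}+B_{a,x}^{2}}+|A_{a,x}|\le k$, and a short manipulation converts this into the closed convex constraint
\begin{equation*}
K_a \triangleq \bigl\{(p,v)\in\R^{n+1} : 2k\,\bigl|2(f_a-p)+\langle D_af+v,\,x-a\rangle\bigr| + \|D_af-v\|^{2} \le k^{2}\|a-x\|^{2}\bigr\}.
\end{equation*}
The extension lemma thus amounts to showing $\bigcap_{a\in\Omega^\star}K_a\ne\emptyset$. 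Convexity of each $K_a$ is immediate (sub-level set of a convex function of $(p,v)$), and the quadratic term forces $\|v-D_af\|\le k\|a-x\|$, which together with the affine constraint on $p$ confines each $K_a\cap K_{a_0}$ (for any fixed reference $a_0\in\Omega^\star$) to a compact subset of $\R^{n+1}$.

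The next step is a finite-intersection argument. I would verify that any finite $\{a_1,\dots,a_N\}\subset\Omega^\star$ satisfies $\bigcap_{i=1}^{N} K_{a_i}\ne\emptyset$. By Helly's theorem applied in $\R^{n+1}$ this reduces to the case $N\le n+2$, where one exhibits a concrete compatible $(p,v)$ from the jets $(f_{a_i},D_{a_i}f)$ using the hypothesis $\Gamma^1(F^\star;\{a_i\})\le k$; the natural candidates are the saddle-point values produced by the Wells-type sup-inf formulas anticipated as $u^\pm$ in the introduction. Once the finite intersection property is established, closedness of each $K_a$ plus the uniform compactness inside $K_{a_0}$ upgrades it to $\bigcap_{a\in\Omega^\star}K_a\ne\emptyset$ by a standard compactness argument.

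The main obstacle I expect is the base case of the Helly step, that is, producing an explicit $(p,v)$ simultaneously feasible for up to $n+2$ data points. This is a genuinely geometric two-/few-point extension problem whose solvability encapsulates the non-trivial content of the theorem: the algebra of the quadratic constraint $2k|A|+B^{2}\le k^{2}$ must be exploited to guarantee that the convex sets $K_{a_i}$ meet whenever the pairwise Lipschitz bounds hold. The one-variable McShane trick is insufficient here because $(p,v)$ couples a scalar height and a vector slope; the right candidate is precisely the extremal sup-inf construction $u^\pm$ that the remainder of the paper develops. After that base case, Helly, compactness, and Zorn's lemma assemble into the theorem by purely formal steps.
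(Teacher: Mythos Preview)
The paper does not supply its own proof of this theorem; it is quoted from \cite{ELG1}. What the present paper does reveal about that proof (see the sentence preceding Proposition~\ref{pro:xddn}, Proposition~\ref{pro:the2.61}, and the comment closing Section~\ref{chap 4}) is that it proceeds via Zorn's lemma, the substantive step being the one-point extension, phrased there as the nonemptiness of the set $\Lambda_x\subset\R^n$ of admissible gradients. Your outline follows the same Zorn-plus-one-point architecture, so at the structural level you are aligned with the cited argument.

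There is a packaging difference: you work in $\R^{n+1}$ with convex sets $K_a$ indexed by single points $a$, whereas $\Lambda_x$ lives in $\R^n$ and is an intersection of closed balls indexed by \emph{pairs} $(a,b)$. By Proposition~\ref{pro:the2.61} the two formulations are equivalent: projecting $\bigcap_a K_a$ to the $v$-coordinate gives exactly $\Lambda_x$, and once $v\in\Lambda_x$ the scalar $p$ ranges over the nonempty interval $[\sup_a\Psi^-,\,\inf_a\Psi^+]$.

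One genuine caution on your finite-intersection step: appealing to the formulas $u^\pm$ for the Helly base case is circular, since $u^\pm$ are defined as optimizations over $\Lambda_x$ and therefore presuppose $\Lambda_x\neq\emptyset$. The non-circular fix, available within this paper and rendering Helly unnecessary, is Wells' direct finite-set construction (Theorem~\ref{theo:W+}): for any finite $\{a_1,\dots,a_N\}\subset\Omega^\star$ one has $\Gamma^1(F;\{a_i\})\le k$, so Theorem~\ref{theo:W+} applied with $\kappa=k$ produces a $C^{1,1}$ extension whose jet at $x$ lies in $\bigcap_{i} K_{a_i}$. With that substitution your compactness-plus-Zorn scheme goes through, and the Helly reduction to $n+2$ points becomes superfluous.
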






\section{Relations between $\Gamma^1(F;\Omega)$ and $\Lip(Df;\Omega)$}\label{link gamma and lip}\label{chap 2}
In this section $\Omega$ is an open subset of $\R^n$. Let $F\in\cF^1(\Omega)$. 

From Proposition \ref{pro:halAB}, we have  $\Gamma^1(F;\Omega)  \geq  \Lip(Df;\Omega)$. When $\Omega =  \R^n$, we know that (see \cite[Proposition 2.4]{ELG1})
 $$\Lip(Df,\cR^n)= \Gamma^1(F,\cR^n),$$ 
but in general $\Gamma^1(F;\Omega)$ may be strictly bigger than $\Lip(Df;\Omega)$. For example, let $A$ and $B$ be open sets in $\cR^n$ such that $A\cap B=\emptyset$. Let $\Omega=A\cup B$, then $\Omega$ is open. Let  $F\in \cF^1(\Omega)$ such that $f_x=0$ if $x\in A$, $f_x=1$ if $x\in B$, and $D_xf=0$ $,\forall x\in \Omega$. Then  $\Lip(Df;\Omega)=0$ and from Proposition  \ref{pro:halAB} we have $$\Gamma^1(F;\Omega)=\sup_{x\in A}\sup_{y \in B} \frac{4}{\|x-y\|^2}>0.$$ 

We now give two new results where we have $\Gamma^1(F,\Omega)=\Lip(Df,\Omega)$.
\begin{proposition}\label{pro:lipagatp} Let $F\in\cF^1(\Omega)$. Suppose there exist $a,b\in \Omega,a\ne b$ such that 
$\Gamma ^1(F;a,b)=\Gamma ^1(F;\Omega)$, then $\Gamma^1(F;\Omega)=\Lip(Df;\Omega)$. 
\end{proposition}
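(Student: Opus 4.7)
\noindent\emph{Plan of proof.} Write $\gamma := \Gamma^1(F;\Omega)$ and $L := \Lip(Df;\Omega)$. Proposition~\ref{pro:halAB} already yields $L\le\gamma$, so the task is to establish $\gamma\le L$ using the attainment hypothesis (we may assume $\gamma>0$). The second identity in Proposition~\ref{pro:halAB}, after swapping $a$ and $b$ if necessary, gives
\[
\gamma \;=\; 2\sup_{y\in B_{1/2}(a,b)} \frac{F(a)(y)-F(b)(y)}{\|a-y\|^2+\|b-y\|^2},
\]
and compactness of $B_{1/2}(a,b)$ produces a maximizer $y^\ast$ with
\[
F(a)(y^\ast)-F(b)(y^\ast) = \tfrac{\gamma}{2}\bigl(\|a-y^\ast\|^2+\|b-y^\ast\|^2\bigr).
\]
Since $a\ne b$, at least one of $a,b$ differs from $y^\ast$; up to relabeling I may assume $y^\ast\ne a$ and perturb $a$.

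For any unit vector $v$ and any $t>0$ small enough that $a+tv\in\Omega$ (allowed because $\Omega$ is open), the hypothesis $\Gamma^1(F;a+tv,b)\le\gamma$ applied to the test point $y=y^\ast$ gives
\[
\frac{2\bigl(F(a+tv)(y^\ast) - F(b)(y^\ast)\bigr)}{\|a+tv-y^\ast\|^2+\|b-y^\ast\|^2}\;\le\;\gamma.
\]
Subtracting the equality above at $t=0$ leads to the extremal inequality
\[
F(a+tv)(y^\ast) - F(a)(y^\ast) \;\le\; \gamma t \langle v,\, a-y^\ast\rangle + \tfrac{\gamma}{2}\, t^2.
\]
From the remark recalled in Section~\ref{chap 2} (drawn from \cite[Proposition~2.5]{ELG1}) one has $f\in\mathcal{C}^{1,1}(\Omega,\R)$ with $\nabla f = Df$ and $\|D_{a+tv}f - D_a f\|\le Lt$, so a direct Taylor expansion shows
\[
F(a+tv)(y^\ast) - F(a)(y^\ast) = \langle D_{a+tv}f - D_a f,\; y^\ast - a\rangle + O(t^2),
\]
the $O(t^2)$ constant depending only on $L$ and $\gamma$. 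Choosing $v = (y^\ast-a)/\|y^\ast-a\|$ (legitimate as $y^\ast\ne a$) makes the right-hand side of the extremal inequality equal to $-\gamma t\|y^\ast-a\| + O(t^2)$, while Cauchy--Schwarz bounds the left-hand side below by $-Lt\|y^\ast-a\|$. Combining yields $(\gamma-L)\,t\,\|y^\ast-a\|\le O(t^2)$; dividing by $t>0$ and letting $t\to 0^+$ gives $\gamma\le L$, as required.

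The main obstacle is that $Df$ is only Lipschitz and not differentiable, so the Taylor expansion of $F(a+tv)(y^\ast)$ cannot invoke a derivative of $Df$; one is limited to the a priori estimate $\|D_{a+tv}f - D_a f\|\le Lt$. The trick is to align the perturbation direction $v$ with $y^\ast-a$, which makes Cauchy--Schwarz sharp enough to recover exactly the Lipschitz constant $L$ (and not a larger multiple of it) on the right-hand side. The degenerate corner $y^\ast\in\{a,b\}$ is absorbed by the symmetric freedom to choose which of $a$ or $b$ to perturb.
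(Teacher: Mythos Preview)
Your argument is correct, and it follows a genuinely different route from the paper's proof. The paper does not perturb $a$ at all: it restricts $F$ to the two-point set $\{a,b\}$, extends minimally to $\R^n$, and then invokes structural lemmas from \cite{ELG2} (Lemmas~8 and~10) asserting that for any MLE $U$ of a two-point field there is a point $c\in B_{1/2}(a,b)$ along which the gradient increment is exactly $\kappa$ on the segments $[a,c]$ and $[b,c]$; openness of $\Omega$ then produces two nearby points in $\Omega\cap[a,c]$ where $\Lip(Df;\cdot,\cdot)$ equals $\gamma$ exactly. Your proof instead reads the attainment $\Gamma^1(F;a,b)=\gamma$ as a first-order optimality condition at the maximizer $y^\ast$ and extracts $\gamma\le L$ by perturbing $a$ toward $y^\ast$, using only Cauchy--Schwarz and the $C^{1,1}$ Taylor estimate. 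The trade-off: the paper's route yields more, namely concrete pairs in $\Omega$ at which the Lipschitz ratio of $Df$ is \emph{attained}, whereas your limiting argument only gives the inequality; on the other hand your proof is self-contained and avoids importing the two-point MLE geometry from \cite{ELG2}. Two minor remarks: the swap of $a$ and $b$ is indeed harmless (the equality in Proposition~\ref{pro:halAB} is symmetric via $A_{b,a}=-A_{a,b}$), and when $y^\ast=a$ the companion computation perturbing $b$ goes through with the inequality reversed, exactly as you indicate.
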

\begin{proof}
It is enough to prove that $\Gamma^1(F;\Omega)\le \Lip(Df;\Omega)$.

Let $G=F|_{\{a,b\}}$ be a Taylorian field on $\dom(G)=\{a,b\}$ with $G(a)=F(a)$ and $G(b)=F(b)$. Let $U$ be a MLE of $F$ on $\R^n$. We have $U(a)=F(a)=G(a)$, $U(b)=F(b)=G(b)$ and $\Gamma^1(U;\mathbb{R}^n)=\Gamma^1(F;\Omega)=\Gamma^1(F;a,b)=\Gamma^1(G;{\rm dom}(G))$. Therefore $U$ is a MLE of $G$ on $\R^n$. 

Using \cite[Lemma 8 and Lemma 10]{ELG2}, there exists a point $c\in B_{1/2}(a,b)$ such that 
\bq\label{ctqurnan}
{\rm Lip}(Du;x,y)  = {\rm Lip}(Du;s,t)  =\Gamma ^1(G;a,b) =\Gamma ^1(F;a,b),
\eq

for all $x, y\in[a,c]$ ($x\ne y$) and $s,t\in[b,c]$ ($s\ne t$).

Since $a\ne b$, we have $c\ne a$ or $c\ne b$. We can assume $c\ne a$. Because $\Omega$ is open, there exists $x\ne y\in [a,c]\cap\Omega$, thus from (\ref{ctqurnan}) we have
\bqq
\Lip(Df;\Omega)\ge {\rm Lip}(Df;x,y)  = {\rm Lip}(Du;x,y)  = \Gamma^1(F;a,b)=\Gamma^1(F;\Omega).
\eqq
\end{proof}

\begin{proposition}\label{pro:lhomep}
 Let $F\in\cF^1(\Omega)$. Suppose there exists $\Omega'\subset\subset\Omega$ such that $\Gamma^1(F;\Omega')=\Gamma^1(F;\Omega)$, then $\Gamma^1(F;\Omega)=\Lip(Df;\Omega)$.
\end{proposition}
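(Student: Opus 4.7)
My plan is to reduce Proposition~\ref{pro:lhomep} to the already-established Proposition~\ref{pro:lipagatp} via a compactness argument that uses the assumption $\Omega'\subset\subset\Omega$. Set $\gamma:=\Gamma^1(F;\Omega')=\Gamma^1(F;\Omega)$, and pick a maximizing sequence $(x_n,y_n)\in\Omega'\times\Omega'$ with $x_n\ne y_n$ and $\Gamma^1(F;x_n,y_n)\to\gamma$. Since $\overline{\Omega'}$ is compact and contained in the open set $\Omega$, a subsequence converges to a limit $(a,b)\in\overline{\Omega'}\times\overline{\Omega'}\subset\Omega\times\Omega$. From there I would split into two cases.

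If $a\ne b$, the explicit formula in Proposition~\ref{pro:halAB} together with the continuity of $f$ and $Df$ on $\Omega$ (recalled in the ``Information and precision for the reader'' paragraph) shows that $(x,y)\mapsto\Gamma^1(F;x,y)$ is continuous on $\{(x,y)\in\Omega\times\Omega\colon x\ne y\}$. Passing to the limit then gives $\Gamma^1(F;a,b)=\gamma=\Gamma^1(F;\Omega)$, and Proposition~\ref{pro:lipagatp} applies directly to yield the desired equality $\Gamma^1(F;\Omega)=\Lip(Df;\Omega)$.

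If $a=b$, the maximizing pairs collapse to a single point $a\in\Omega$. Since $\Omega$ is open, I would pick $\delta>0$ with $B(a,\delta)\subset\Omega$; the tail of the sequence lies in $B(a,\delta)$, so $\Gamma^1(F;B(a,\delta))=\gamma$. To finish I would invoke the MLE $U$ of $F$ on $\R^n$ given by Theorem~\ref{theo:moronggafa}: the gradient $\nabla u$ is Lipschitz on $\R^n$ with constant $\gamma$ and agrees with $Df$ on $\Omega$. Using Rademacher's theorem for $\nabla u$ combined with the formula of Proposition~\ref{pro:halAB}---controlling how $A_{x_n,y_n}(F)$ and $B_{x_n,y_n}(F)$ behave as $(x_n,y_n)\to(a,a)$---one extracts, at a suitable Rademacher point of $B(a,\delta)$, a lower bound on the local Lipschitz constant of $Df$ equal to $\gamma$; hence $\Lip(Df;\Omega)\ge\gamma$. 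The reverse inequality $\Lip(Df;\Omega)\le\Gamma^1(F;\Omega)$ was already noted at the start of the section.

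The hard part will be the degenerate case $a=b$: because $f$ is only $\mathcal{C}^{1,1}$ (not $\mathcal{C}^2$), the term $A_{x_n,y_n}(F)$ need not tend to zero along diagonal limits, so identifying the precise local Lipschitz constant at the limit point---and transferring it from the global extension $\nabla u$ back to $Df$ on $\Omega$---requires a careful use of the a.e.\ second differentiability of $f$ coming from Rademacher's theorem applied to $\nabla u$.
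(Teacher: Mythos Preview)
Your overall architecture---compactness on $\overline{\Omega'}\times\overline{\Omega'}$, then a dichotomy on whether the limit pair is diagonal---matches the paper's proof, and your treatment of the case $a\ne b$ is essentially the same as the paper's (continuity of $(x,y)\mapsto\Gamma^1(F;x,y)$ off the diagonal via Proposition~\ref{pro:halAB}, then Proposition~\ref{pro:lipagatp}).

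The gap is in the degenerate case $a=b$. Your plan there is to invoke Rademacher for $\nabla u$ and ``extract, at a suitable Rademacher point of $B(a,\delta)$, a lower bound on the local Lipschitz constant of $Df$ equal to $\gamma$''. This is not justified. From $\Gamma^1(F;x_n,y_n)\to\gamma$ you only know that $\sqrt{A_n^2+B_n^2}+|A_n|\to\gamma$ with $B_n=\Lip(Df;x_n,y_n)$; nothing prevents $\limsup B_n<\gamma$ while $|A_n|$ carries the rest. Almost-everywhere second differentiability of $u$ at points near $a$ gives you pointwise bounds $\|D^2u\|\le\gamma$, not the lower bound you need on $\Lip(Df;\Omega)$, and there is no mechanism in your outline linking a Rademacher point to the particular maximizing sequence $(x_n,y_n)$.

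The paper bypasses all of this with a single clean observation, taken from the proof of \cite[Proposition~2.4]{ELG1}: whenever $B_{1/2}(x,y)\subset\Omega$, one has $\Gamma^1(F;x,y)\le\Lip(Df;\Omega)$. Since $x_n,y_n\to a\in\Omega$ and $\Omega$ is open, eventually $B_{1/2}(x_n,y_n)\subset\Omega$, so $\Gamma^1(F;x_n,y_n)\le\Lip(Df;\Omega)$ for large $n$, and letting $n\to\infty$ gives $\gamma\le\Lip(Df;\Omega)$ directly. No second-order analysis is needed; the whole $a=b$ case is two lines once you have this lemma.
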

\begin{proof}Again, it is enough to prove that $\Gamma^1(F;\Omega)\le \Lip(Df;\Omega)$.
Let $h>0$, we define 
$\Lambda_h=\{(a,b)\in\overline{\Omega'}\times\overline{\Omega'}:|a-b|\ge h\}$ and $\Gamma^1_h(F;\overline{\Omega'})=\sup\limits_{(a,b)\in\Lambda_h}\Gamma^1(F;{a,b})$. Applying Proposition \ref{pro:halAB}, the mapping $(a,b)\mapsto\Gamma^1(F;a,b)$ is continuous  on $\Lambda_h$. Moreover, $\Lambda_h$ is compact, thus there exists $(a_h,b_h)\in\Lambda_h$ such that 
\bq\label{eq:fahbhj}
\Gamma^1(F;a_h,b_h)=\Gamma^1_h(F;\overline{\Omega'}).
\eq
{\it*Case 1:} There exists $h>0$ such that 
\bq\label{eq:fahbhjc1}
\Gamma^1_h(F;\overline{\Omega'})=\Gamma^1(F;\overline{\Omega'})
\eq
From (\ref{eq:fahbhj}),(\ref{eq:fahbhjc1})  and the condition $\Gamma^1(F;\Omega')=\Gamma^1(F;\Omega)$, we have $\Gamma^1(F;a_h,b_h)=\Gamma^1(F;\Omega)$. Applying Proposition \ref{pro:lipagatp} we have $\Gamma^1(F;\Omega)=\Lip(Df;\Omega)$.\\
{\it*Case 2:} For all $h>0$, we always have 
\bq\label{eq:fahbhjc2}
\Gamma^1_h(F;\overline{\Omega'})<\Gamma^1(F;\overline{\Omega'}).
\eq

Let $h=1/n$, then for any $n\in \mathbb{N}$ there exist $(a_n,b_n)\in \Lambda_{1/n}$ such that $\Gamma^1(F;a_n,b_n)=\Gamma^1_{1/n}(F;\overline{\Omega'})$. Since $(a_n),(b_n)\subset \overline{\Omega'}$ and $\overline{\Omega'}$ is compact, there exist a subsequence $(a_{n_k})$ of $(a_n)$ and a subsequence $(b_{n_k})$ of $(b_n)$ such that $(a_{n_k})$ converges to an element $a$ of $\overline{\Omega'}$ and  $(b_{n_k})$ converges to an element $b$ of $\overline{\Omega'}$.

If $a\ne b$ then 
\bqq
\Gamma^1(F;\overline{\Omega'})=\lim\limits_{k\to\infty}\Gamma^1_{1/n_k}(F;\overline{\Omega'})=\lim\limits_{k\to\infty}\Gamma^1(F;a_{n_k},b_{n_k})=\Gamma^1(F;a,b).
\eqq
But this is not possible because for $l=|a-b|>0$ we deduce from (\ref{eq:fahbhjc2}) that $\Gamma^1(F;a,b)\le \Gamma^1_l(F;\overline{\Omega'})<\Gamma^1(F;\overline{\Omega'})$.

Therefore, we must have $a=b$. From the proof of \cite[Proposition 2.4]{ELG1}, we see that if $B_{1/2}(x,y)\subset\Omega$ then $\Gamma^1(F;x,y)\le \Lip(Df;\Omega)$. We will use this property for proving in the case $a=b$. For any $\eps>0$, since $(a_{n_k})$ and $(b_{n_k})$ are both converge to $a\in \overline{\Omega'}\subset\Omega$, there exists $k\in\mathbb{N}$ such that $B_{1/2}(a_{n_k},b_{n_k})\subset\Omega$ and $$\eps+\Gamma^1(F;a_{n_k},b_{n_k})\ge\Gamma^1(F;\overline{\Omega'})=\Gamma^1(F;\Omega).$$ Since $B_{1/2}(a_{n_k},b_{n_k})\subset\Omega$ we have  $$\Gamma^1(F;a_{n_k},b_{n_k})\le \Lip(Df;\Omega).$$Therefore $$\eps+\Lip(Df;\Omega)\ge \Gamma^1(F;\Omega).$$ This inequality holds for any $\eps>0$, so that we have $\Lip(Df;\Omega)\ge \Gamma^1(F;\Omega)$.
\end{proof}

\begin{proposition}\label{pro:be2}
Let $\Omega$ be an open and convex set in $\R^n$ and let $F\in\cF^1(\Omega)$. Then 
\bqq
\Gamma^1(F;\Omega)\le 2\Lip(Df;\Omega).
\eqq
\end{proposition}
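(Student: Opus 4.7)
By the ``Information and precision'' paragraph, since $\Omega$ is open the function $f(x)\triangleq f_x$ belongs to $\mathcal{C}^{1,1}(\Omega,\R)$ with $\nabla f(x)=D_xf$, and therefore $\Lip(Df;\Omega)=\Lip(\nabla f;\Omega)$. Denote $L\triangleq\Lip(Df;\Omega)$. Using Proposition~\ref{pro:halAB}, it suffices to show that for every $a\neq b$ in $\Omega$,
\[
\sqrt{A_{a,b}(F)^{2}+B_{a,b}(F)^{2}}+|A_{a,b}(F)|\le 2L.
\]
The estimate $B_{a,b}(F)\le L$ is immediate from the definition of $B$ and the Lipschitz property of $\nabla f$. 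The work is to prove the refined estimate $|A_{a,b}(F)|\le L/2$; combining the two bounds via $\sqrt{A^{2}+B^{2}}\le |A|+B$ gives $\Gamma^{1}(F;a,b)\le 2|A|+B\le L+L=2L$, from which one concludes by taking the supremum over $a\neq b$.

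\textbf{The key estimate $|A_{a,b}(F)|\le L/2$.} Fix $a\neq b$ in $\Omega$. Since $\Omega$ is convex, the segment $[a,b]$ lies in $\Omega$, so I can define
\[
g(s)\triangleq \langle \nabla f(a+s(b-a)),\,b-a\rangle,\qquad s\in[0,1].
\]
Because $\nabla f$ is $L$-Lipschitz on $\Omega$, $g$ is Lipschitz on $[0,1]$ with constant $L\|b-a\|^{2}$, hence absolutely continuous with $|g'(s)|\le L\|b-a\|^{2}$ a.e. The fundamental theorem of calculus gives $f_{b}-f_{a}=\int_{0}^{1}g(s)\,ds$, and moreover $g(0)=\langle D_{a}f,b-a\rangle$, $g(1)=\langle D_{b}f,b-a\rangle$. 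Integrating by parts with the antiderivative $s-\tfrac12$ of the constant $1$,
\[
\int_{0}^{1}g(s)\,ds=\frac{g(0)+g(1)}{2}-\int_{0}^{1}\bigl(s-\tfrac12\bigr)g'(s)\,ds.
\]
Rearranging, the numerator of $A_{a,b}(F)$ satisfies
\[
2(f_{a}-f_{b})+\langle D_{a}f+D_{b}f,b-a\rangle=\int_{0}^{1}(2s-1)\,g'(s)\,ds.
\]
Since $\int_{0}^{1}|2s-1|\,ds=1/2$, the bound $|g'|\le L\|b-a\|^{2}$ yields $|A_{a,b}(F)|\,\|a-b\|^{2}\le \tfrac{L}{2}\|b-a\|^{2}$, i.e., $|A_{a,b}(F)|\le L/2$.

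\textbf{Main obstacle.} The only delicate point is getting $|A|\le L/2$ rather than the naive $|A|\le L$ that comes from a one-sided Taylor expansion ($|f_{a}-f_{b}+\langle D_{b}f,b-a\rangle|\le \tfrac{L}{2}\|a-b\|^{2}$ and the symmetric inequality, then triangle-summed). The naive bound gives only $\Gamma^{1}(F;a,b)\le (1+\sqrt{2})L>2L$, which is insufficient. The trick is to use the symmetric antiderivative $s-\tfrac12$ (equivalently, a trapezoidal-rule error representation), exploiting the cancellation $\int_{0}^{1}(2s-1)\,ds=0$; once this is in place the remainder of the argument is entirely routine.
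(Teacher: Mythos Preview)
Your proof is correct but follows a different route from the paper's. The paper works directly with the definition $\Gamma^1(F;x,y)=2\sup_{z}\frac{|F(x)(z)-F(y)(z)|}{\|x-z\|^2+\|y-z\|^2}$: it writes
\[
F(x)(z)-F(y)(z)=\int_0^1\langle\nabla f(y+t(x-y))-\nabla f(x),\,x-z\rangle\,dt+\int_0^1\langle\nabla f(y+t(x-y))-\nabla f(y),\,z-y\rangle\,dt,
\]
bounds each integrand by $L(1-t)\|x-y\|\|x-z\|$ and $Lt\|x-y\|\|z-y\|$ respectively, and after integration in $t$ obtains $|F(x)(z)-F(y)(z)|\le L(\|x-z\|^2+\|z-y\|^2)$. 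So the factor $\tfrac12$ in the paper comes from $\int_0^1 t\,dt=\int_0^1(1-t)\,dt=\tfrac12$, without ever passing through the $A,B$ formula of Proposition~\ref{pro:halAB}.

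Your approach instead exploits that closed-form formula and proves the sharper separate estimates $B\le L$ and $|A|\le L/2$, the latter via the trapezoidal-rule remainder identity; the factor $\tfrac12$ here is $\int_0^1|2s-1|\,ds$. This buys a bit more: you isolate exactly how convexity enters (only in bounding $|A|$), and the intermediate bound $|A_{a,b}(F)|\le L/2$ is of independent interest. The paper's argument, on the other hand, is self-contained and does not rely on Proposition~\ref{pro:halAB}. Both are short and rest on the same underlying mechanism of integrating $\nabla f$ along the segment $[a,b]$.
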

\begin{proof}
Let $f$ be the canonical associate to $F$. We can write
\bqq
f(x)-f(y)&=&\int\limits_0^1 \langle \nabla f(y+t(x-y)),x-y\rangle dt\\
\eqq

For any $x,y\in\Omega$ and $z\in\R^n$ we have
\bqq
&&F(x)(z)-F(y)(z)\\
&&=f(x)-f(y)+\langle \nabla f(x),z-x\rangle-\langle \nabla f(y),z-y\rangle\\
&&=\int\limits_0^1 \langle \nabla f(y+t(x-y))-\nabla f(x),x-z\rangle dt+\int\limits_0^1 \langle \nabla f(y+t(x-y))-\nabla f(y),z-y\rangle dt.
\eqq
Hence
\bqq
&&\left|F(x)(z)-F(y)(z)\right|\\
&&\le\int_0^1 \Lip(\nabla f;\Omega)\|x-y\|\|x-z\|(1-t) dt+\int_0^1 \Lip(\nabla f;\Omega)\|(x-y)\|\|z-y\|t dt,\\
&&=\frac{1}{2}\Lip(\nabla f;\Omega)\|x-y\|(\|x-z\|+\|z-y\|),\\
&& \leq \frac{1}{2} \Lip(\nabla f;\Omega) (\|x-z\|+\|z-y\|)^2,\\
&&\le \Lip(\nabla f;\Omega)\left(\|x-z\|^2+\|z-y\|^2\right).
\eqq
Therefore $\Gamma^1(F;\Omega)\le 2\Lip(\nabla f;\Omega)=2\Lip(Df;\Omega)$.
\end{proof}
From the proof of Proposition \ref{pro:be2}, we obtain
\begin{corollary} \label{cor:gam<lip}
Let $\Omega$ be an open and convex set in $\R^n$ and $f\in \mathcal{C}^{1,1}(\Omega,\mathbb{R})$.  Then $F\in\cF^1(\Omega)$ where $F$ is the 1-field associated to $f$.
\end{corollary}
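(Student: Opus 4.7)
The plan is to extract, rather than rework, the computation inside the proof of Proposition \ref{pro:be2}. A careful reading of that proof shows that the hypothesis $F\in\cF^1(\Omega)$ is never genuinely used: the computation only manipulates $f$ and $\nabla f$, and its real inputs are (i) that $\Omega$ is open, so $f$ is differentiable on $\Omega$; (ii) that $\Omega$ is convex, so for any $x,y\in\Omega$ the segment $[x,y]$ lies in $\Omega$ and the fundamental theorem of calculus
$$f(x) - f(y) = \int_0^1 \langle \nabla f(y + t(x-y)), x-y\rangle\, dt$$
applies; and (iii) that $\nabla f$ is globally Lipschitz on $\Omega$, i.e.\ $f\in\mathcal{C}^{1,1}(\Omega,\R)$.

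First I would confirm that $F$ as prescribed by the corollary, namely $F(x)(a)\triangleq f(x) + \langle \nabla f(x); a - x\rangle$, is indeed a $1$-field in the sense of (\ref{def-field}); this is immediate from the differentiability of $f$, with the associated maps $x\mapsto f(x)\in\R$ and $x\mapsto \nabla f(x)\in\R^n$. Then I would simply transcribe the chain of inequalities of Proposition \ref{pro:be2} to obtain, for every $x,y\in\Omega$ and $z\in\R^n$,
$$|F(x)(z) - F(y)(z)| \le \Lip(\nabla f;\Omega)\bigl(\|x-z\|^2 + \|z-y\|^2\bigr),$$
from which $\Gamma^1(F;x,y)\le 2\Lip(\nabla f;\Omega)$ for all $x\neq y\in\Omega$, and finally $\Gamma^1(F;\Omega)\le 2\Lip(\nabla f;\Omega) <+\infty$, which is precisely $F\in\cF^1(\Omega)$.

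There is no genuine obstacle; the only subtlety is logical rather than computational. One must be convinced that the estimates in the earlier proof are purely pointwise in $x,y,z$ and never secretly invoke a pre-existing finiteness of $\Gamma^1(F;\Omega)$, so that they survive the weakening of the hypothesis from ``$F$ is a Taylorian field'' to merely ``$f\in\mathcal{C}^{1,1}(\Omega,\R)$.'' Once this is observed, the corollary is essentially a tautological consequence of the previous proof, which is exactly why the authors introduce it as being \emph{obtained from} that proof.
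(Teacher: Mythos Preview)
Your proposal is correct and is exactly the paper's approach: the paper simply states that the corollary is obtained from the proof of Proposition~\ref{pro:be2}, and your write-up makes explicit precisely why that proof goes through under the weaker hypothesis $f\in\mathcal{C}^{1,1}(\Omega,\R)$, namely that the chain of estimates uses only convexity of $\Omega$, differentiability of $f$, and $\Lip(\nabla f;\Omega)<\infty$, never an a priori bound on $\Gamma^1(F;\Omega)$.
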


\begin{lemma}\label{lem:libe}
Let $u\in \mathcal{C}^{1}(\R^n,\R)$ then 

\bqq
{\rm Lip}( \nabla  u;x,y) \leq \inf_{z \in [x,y]}  {\rm max}\{{\rm Lip}( \nabla  u;x,z) , {\rm Lip}( \nabla  u;z,y) \}, ~ \mbox{ for all }  x,y\in \R^n.
\eqq
\end{lemma}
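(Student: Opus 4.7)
The plan is to exploit the collinearity of $x$, $z$, $y$ when $z\in[x,y]$ so that the lengths $\|x-z\|$ and $\|z-y\|$ sum exactly to $\|x-y\|$, and then apply the triangle inequality to $\nabla u(x)-\nabla u(y)=(\nabla u(x)-\nabla u(z))+(\nabla u(z)-\nabla u(y))$. The observation $u\in\mathcal{C}^1$ is only used to ensure $\nabla u$ is defined; no deeper regularity (e.g.\ $\mathcal{C}^{1,1}$) is actually needed for the inequality.

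In detail, I would fix $x\neq y$ in $\R^n$ and a point $z\in[x,y]$ with $z\neq x$ and $z\neq y$ (the endpoint cases of the infimum are either trivial or can be excluded by convention since the corresponding ratio is undefined). Writing $z=(1-t)x+ty$ for some $t\in(0,1)$ gives $\|x-z\|=t\|x-y\|$ and $\|z-y\|=(1-t)\|x-y\|$. Setting $M\triangleq \max\{\Lip(\nabla u;x,z),\Lip(\nabla u;z,y)\}$, the definition \eqref{dncualip} yields
\begin{equation*}
\|\nabla u(x)-\nabla u(z)\|\le M\,t\,\|x-y\|,\qquad \|\nabla u(z)-\nabla u(y)\|\le M\,(1-t)\,\|x-y\|.
\end{equation*}

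The triangle inequality then gives
\begin{equation*}
\|\nabla u(x)-\nabla u(y)\|\le \|\nabla u(x)-\nabla u(z)\|+\|\nabla u(z)-\nabla u(y)\|\le M\bigl(t+(1-t)\bigr)\|x-y\|=M\|x-y\|,
\end{equation*}
so that $\Lip(\nabla u;x,y)\le M$. Taking the infimum over $z\in[x,y]$ yields the claim.

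There is no serious obstacle here: the lemma is essentially a one-line application of the triangle inequality combined with the convex-combination identity $t+(1-t)=1$. The only mild subtlety is the convention for $z\in\{x,y\}$, which is handled by restricting the infimum to the open segment (where the Lipschitz ratios are well-defined); this does not change the value of the infimum since as $z\to x$ or $z\to y$ the corresponding ratio tends to an a priori bound that is at least $\Lip(\nabla u;x,y)$ by the very inequality we are proving in the interior.
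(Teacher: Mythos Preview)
Your proof is correct and essentially identical to the paper's: both use that $\|x-z\|+\|z-y\|=\|x-y\|$ for $z\in[x,y]$, apply the triangle inequality to $\nabla u(x)-\nabla u(y)$, and bound the resulting convex combination by the maximum. The paper writes the intermediate step as a weighted average with weights $\|x-z\|/\|x-y\|$ and $\|z-y\|/\|x-y\|$, while you parametrize via $t$, but this is purely cosmetic.
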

\begin{proof}
Let $x,y\in\Omega$ and $z\in [x,y]$ then we have $\|x-y\|=\|x-z\|+\|z-y\|$. It follows 

\bqq
\begin{array}{lll}
{\rm Lip}( \nabla  u;x,y)  & \le & \dfrac{\|x-z\|}{\|x-z\|+\|y-z\|}{\rm Lip}( \nabla  u;x,z)+\dfrac{\|z-y\|}{\|x-z\|+\|y-z\|}{\rm Lip}( \nabla  u;z,y), \\
                           &      & \\
                           & \le & \max\left\{  {\rm Lip}( \nabla  u;x,z) , {\rm Lip}( \nabla  u;z,y)      \right\}.
\end{array}
\eqq

\end{proof}

\begin{proposition}\label{co-ex:lipgam}
There exist $\Omega$ open convex set and $F\in\cF^1(\Omega)$ such that 
\bqq
\Lip(\nabla f;\Omega)<\Gamma^1(F;\Omega).
\eqq
\end{proposition}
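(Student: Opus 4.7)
My plan is to construct an explicit counter-example, i.e.\ an open convex $\Omega\subset\R^n$ and a Taylorian field $F\in\cF^1(\Omega)$ whose canonical associate $f$ satisfies $\Lip(\nabla f;\Omega)<\Gamma^1(F;\Omega)$. Dimension $n=1$ must be excluded: any open convex subset of $\R$ is an interval, and a $\cC^{1,1}$ function on an interval extends to all of $\R$ by prolonging $\nabla f$ with its one-sided limits at the endpoints as constants; such an extension has the same gradient-Lipschitz constant, which forces $\Gamma^1(F;\Omega)=\Lip(\nabla f;\Omega)$. Hence the construction must be genuinely in dimension $n\geq 2$. Propositions \ref{pro:lipagatp} and \ref{pro:lhomep} impose a further structural constraint on the candidate: $\Gamma^1(F;\Omega)$ cannot be attained by any pair of $\Omega$, and cannot even be approached by pairs inside a compactly included subset $\Omega'\subset\subset\Omega$. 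Consequently, the supremum must be witnessed only in a limit along a sequence of pairs $(a_k,b_k)$ escaping to $\partial\Omega$.

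Concretely, I would start from a 1-field $F_0$ prescribed on a two-point set $\{a,b\}\subset\R^2$, with data $(f_a,f_b,D_af,D_bf)$ tuned so that, by the explicit formula of Proposition~\ref{pro:halAB}, the bipunctual constant equals some definite $\kappa>0$ while the pure-gradient ratio $B_{a,b}(F_0)=\|D_af-D_bf\|/\|a-b\|$ is strictly less than $\kappa$ (this gap is what will allow the strict inequality to survive). I would then apply the minimal Lipschitz extension procedure of Section~\ref{chap 3} --- for instance the Wells-type extension $w^+$, which has an explicit piecewise formula --- to extend $F_0$ to a 1-field $F$ on all of $\R^2$, with canonical $f\in\cC^{1,1}(\R^2,\R)$ satisfying $\Lip(\nabla f;\R^2)=\kappa$. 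Finally, I would choose a bounded open convex $\Omega\subset\R^2$ with $\{a,b\}\subset\Omega$ but small enough that the worst pair for the gradient of the explicit extension lies outside $\Omega$. Since $a,b\in\Omega$ gives $\Gamma^1(F;\Omega)\geq\Gamma^1(F;a,b)=\kappa$ and the extension is minimal, one has $\Gamma^1(F;\Omega)=\kappa$, and the choice of $\Omega$ yields $\Lip(\nabla f;\Omega)<\kappa$, as desired.

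The main obstacle is this last step: verifying that the Lipschitz constant of the explicit extended gradient strictly decreases on $\Omega$, i.e.\ locating where $\|\nabla f(x)-\nabla f(y)\|/\|x-y\|$ is maximized on $\R^2$ and confirming that a convex neighbourhood of $\{a,b\}$ can be chosen to avoid those worst pairs. This relies on the explicit piecewise-quadratic nature of the minimal Lipschitz extension formula from Section~\ref{chap 3}, which lets one compute $\nabla f$ pointwise and estimate its Lipschitz ratios on each region. Once a suitable $(\Omega,F)$ is exhibited, the strict inequality $\Lip(\nabla f;\Omega)<\Gamma^1(F;\Omega)$ follows by a direct computation using Proposition~\ref{pro:halAB} applied to a pair $(a,b)$ near $\partial\Omega$.
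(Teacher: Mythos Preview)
Your proposal contains a genuine and fatal contradiction. You correctly note that Proposition~\ref{pro:lipagatp} forbids the supremum $\Gamma^1(F;\Omega)$ from being attained at a pair inside the open set $\Omega$, yet your construction places $a,b\in\Omega$ and uses $\Gamma^1(F;a,b)=\kappa=\Gamma^1(F;\Omega)$. These two statements together are exactly the hypothesis of Proposition~\ref{pro:lipagatp}, which then forces $\Lip(\nabla f;\Omega)=\Gamma^1(F;\Omega)=\kappa$, destroying the strict inequality you are after. So the plan refutes itself.

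Moving $a,b$ to $\partial\Omega$ does not rescue the idea of simply \emph{restricting} an MLE of the two-point field. For the explicit piecewise-quadratic MLE you invoke, the point $a$ lies in the interior of one of the quadratic regions (say $\{p\ge 0,\,q\le 0\}$ in the paper's notation), and on that region $\nabla f$ is affine with $\|\nabla f(x)-\nabla f(y)\|=\kappa\|x-y\|$ whenever $x-y$ is parallel to $a-c$. Any open set with $a\in\overline{\Omega}$ therefore contains an open subset of this region, hence contains such a pair, giving $\Lip(\nabla f;\Omega)=\kappa$ again. The ``worst pairs'' are not isolated; they fill an open cone near $a$ and $b$ and cannot be avoided by shrinking $\Omega$.

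The paper sidesteps this by \emph{not} restricting the global MLE at all. Instead it extracts the values of $F$ only on three carefully chosen one-dimensional sets $\omega_1,\omega_2,\omega_3$ (two rays $\Delta_a,\Delta_b$ orthogonal to the critical directions, together with $\{a\}$ and $\{b\}$), on which direct computation gives $\Gamma^1(F;\omega_i)\le k\kappa$ with $k<1$. It then takes \emph{new} MLEs $G_i$ of $F|_{\omega_i}$ on convex pieces $A_i$, glues them into a field $G$ on a convex open $\Omega$, and uses Lemma~\ref{lem:libe} to bound $\Lip(\nabla g;\Omega)\le k\kappa$. The pair $a,b$ sits in $\overline{\Omega}$ with $G(a)=F(a)$, $G(b)=F(b)$, so $\Gamma^1(G;\Omega)\ge\kappa$. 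The essential extra idea you are missing is this re-extension step: one must discard the global MLE near $a,b$ and rebuild the field from data on lower-dimensional sets where the gradient constant is already strictly below $\kappa$.
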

\begin{proof}
We consider in $\cR^2$. Let $a=(-1,0), b=(1,0)$. We define $U\in\cF^1(\{a,b\})$ as 
$$  D_au=(0,1),~D_bu=(0,-1),~ u_a=\frac{1}{\sqrt{3}},~ u_b=-\frac{1}{\sqrt{3}}. $$

We have   $A_{a,b}(U)=\frac{1}{\sqrt{3}},~B_{a,b}(U)=1, $ and  $\Gamma^1(F;\{a,b\})=\sqrt{3}.$ 

Using \cite[Lemma 7,8]{ELG2}, we define 
\bqq
&&\kappa=\Gamma^1(F,\{a,b\})=\sqrt{3},\\
&&c=\frac{a+b}{2}+\frac{D_a u-D_b u}{2\kappa},\\
&&\widetilde u_c=U(a)(c)-\frac{\kappa}{2}\|a-c\|^2= U(b)(c) + \frac{\kappa}{2}\|b-c\|^2,\\
&&D_c \widetilde u=D_au+\kappa(a-c)  = D_bu-\kappa(b-c),\\
&& \widetilde U(c)(z)= \widetilde u_c+\langle D_c \widetilde u,z-c\rangle,~ z\in\cR^n,
\eqq
and define 
\bqq
f(z) \triangleq \left\{ \begin{array}{ll}
\widetilde U(c)(z)- \dfrac{\kappa}{2}\dfrac{\langle z-c,a-c\rangle^2}{\|a-c\|^2}, &\text{if }p(z)\ge 0\text{ and }q(z)\le 0, \\
\widetilde U(c)(z)+ \dfrac{\kappa}{2}\dfrac{\langle z-c,b-c\rangle^2}{\|b-c\|^2}, &\text{if }p(z)\le 0\text{ and }q(z)\ge 0,  \\
\widetilde U(c)(z),&\text{if }p(z)\le 0\text{ and }q(z)\le 0,  \\ 
\widetilde U(c)(z)- \dfrac{\kappa}{2}\dfrac{\langle z-c,a-c\rangle^2}{\|a-c\|^2}+ \dfrac{\kappa}{2}\dfrac{\langle z-c,b-c\rangle^2}{\|b-c\|^2},& \text{if }p(z)\ge 0\text{ and }q(z)\ge 0,  \\ 
\end{array}  \right.
\eqq
where $p(z)=\langle a-c,z-c\rangle $  and $q(z)=\langle b-c,z-c\rangle$. 

From \cite[Lemma 8]{ELG2}, we know that if $F$ is the 1-field associated to $f$, then $F$ is a MLE of $U$ on $\cR^2$. Moreover, from \cite[Lemma 9]{ELG2} we have 
\bq \label{eq:Axy=0}
A_{x,y}(F)=0,
\eq
if $$x,y\in \{z\in\cR^2:p(z)\ge 0\text{ and }q(z)\le 0\},$$ or $$x,y\in \{z\in\cR^2:p(z)\le 0\text{ and }q(z)\ge 0\},$$ or $$x,y\in \{z\in\cR^2:p(z)\le 0\text{ and }q(z)\le 0\},$$ or $$x,y\in \{z\in\cR^2:p(z)\ge 0\text{ and }q(z)\ge 0\}.$$

We define \[\left\{ \begin{gathered}
  {w_a=h-\frac{1}{\beta^2}v} \hfill \\
  {w_b=-h-\frac{1}{\beta^2}v} \hfill \\ 
\end{gathered}  \right.\]
where $h=\frac{b-a}{2},~v=\frac{D_au-D_bu}{2\kappa}$ and $\beta=\|v\|\in (0,1)$.

Since $\|h\|=1,~\beta=\|v\|$, $\langle h,v \rangle=0$, $c-a=h+v$ and $c-b=-h+v$, we have 
$$\langle c-a,w_a \rangle=0 {\rm ~and~} \langle c-b,w_b \rangle=0.$$ 

There exists $\alpha_0\in (0,+\infty)$ and  $\alpha_0$ is small such that $x_a=c+\alpha_0w_b,$ and  $x_b=c+\alpha_0w_a$ are in the interior of convex hull of $\{a,b,c\}$. We define 
\bqq
\Delta_a&=&\{x\in\cR^2: x=c+\alpha w_b, \alpha\ge \alpha_0\},\\
\Delta_b&=&\{x\in\cR^2: x=c+\alpha w_a, \alpha\ge \alpha_0\},
\eqq
and
$$
\omega_1 =\Delta_a \cup \{a\},~ \omega_2 =\Delta_a \cup \Delta_b,~ \omega_3 =\Delta_b \cup \{b\}.
$$
\begin{figure}[ht]
\begin{center}
\includegraphics[scale=0.4]{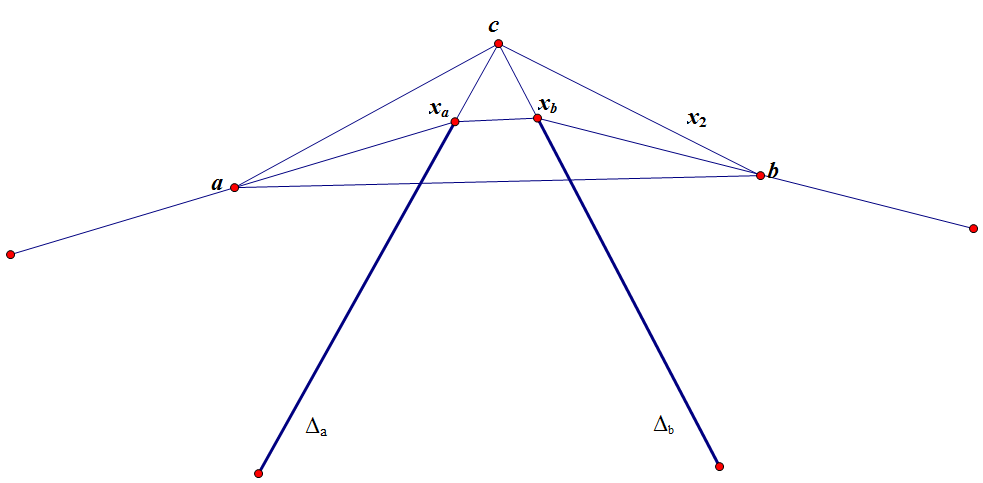}\\
\end{center}
\caption{}
\end{figure}
*Step 1: In this step, we will prove that there exist a constant $k\in(0,1)$ depending on $\beta$ and $\alpha_0$ such that
\bqq
\max_{i =1,2,3} \Gamma^1(F;\omega_i)  \le k\kappa.
\eqq

Since $\|h\|=1,~\|v\|=\beta\in (0,1) $, $c-a=h+v,~ c-b=-h+v,~\langle h,v \rangle=0$,~ $\langle c-a,w_a \rangle=0$ and $\langle c-b,w_b \rangle=0,$  we have\\
*If $x,x'\in\Delta_a$, then

\bq\label{eq:pvdlpg1}
\dfrac{1}{\kappa}{\rm Lip}( \nabla  f;x,x')&=&\frac{\|\langle x-x',a-c\rangle\|}{\|x-x'\|\|a-c\|}=1-\frac{(\beta-1)^2}{1+\beta^2}.
\eq
*If $x,x'\in\Delta_b$, then 
\bq\label{eq:pvdlpg2}
\dfrac{1}{\kappa}{\rm Lip}( \nabla  f;x,x')&=&\frac{\|\langle x-x',b-c\rangle\|}{\|x-x'\|\|b-c\|}=1-\frac{(\beta-1)^2}{1+\beta^2}.
\eq
*If $x\in\Delta_a$, $x'\in\Delta_b$, then 
\bq\label{eq:pvdlpg3}
\dfrac{1}{\kappa}{\rm Lip}( \nabla  f;x,x')&=&\left\|\frac{-\langle x-x',c-a\rangle(c-a)}{\|x-x'\|\|c-a\|^2}+\frac{\langle x-x',c-b\rangle(c-b)}{\|x-x'\|\|c-b\|^2}\right\|\nn\\
&=&1-\frac{(\beta-1)^2}{1+\beta^2}.
\eq
*If $x=c+\alpha w_b\in\Delta_a$, then
\bq\label{eq:pvdlpg4}
\dfrac{1}{\kappa}{\rm Lip}( \nabla  f;x,a)&=&\frac{\|\langle x-a,a-c\rangle\|}{\|x-a\|\|a-c\|}\nn\\
&=&1-\frac{\alpha^2(\beta^2-1)^2}{(1+\beta^2)(\beta^2(1-\alpha)^2+(\beta^2-\alpha)^2)}.
\eq
*If $x=c+\alpha w_a\in\Delta_b$, then
\bq\label{eq:pvdlpg5}
\dfrac{1}{\kappa}{\rm Lip}( \nabla  f;x,b)&=&\frac{\|\langle x-b,b-c\rangle\|}{\|x-b\|\|b-c\|}\nn\\
&=&1-\frac{\alpha^2(\beta^2-1)^2}{(1+\beta^2)(\beta^2(1-\alpha)^2+(\beta^2-\alpha)^2)}.
\eq
From (\ref{eq:Axy=0}), (\ref{eq:pvdlpg1}),(\ref{eq:pvdlpg2}),(\ref{eq:pvdlpg3}),(\ref{eq:pvdlpg4}),(\ref{eq:pvdlpg5}) and Proposition \ref{pro:halAB}, we have 

\bq \label{equ:consk}
\max_{i =1,2,3} \Gamma^1(F;\omega_i)  \le k\kappa,
\eq
where $k\in (0,1)$ is a constant depending on $\beta$ and $\alpha_0$.

*Step 2: We will construct $\Omega$ open convex set, and a 1-field $G\in \cF^1(\Omega)$ such that 
\bqq
\Lip(\nabla g;\Omega)<\Gamma^1(G;\Omega).
\eqq
Using the notation $R_{x,y}$  to  denote the ray starting $x$ and passing through another point $y$. Let $A_1$ be the convex hull of $\Delta_a\cup\{R_{x_a,a}\}$, $A_2$ be the convex hull of $\Delta_a\cup \Delta_b$ and $A_3$ be the convex hull of $\Delta_b\cup\{R_{x_b,b}\}$. Let $\Omega$ be the interior of $A_1\cup A_2\cup A_3$. Then $\Omega$ is open and convex.
\begin{figure}[ht]
\begin{center}
\includegraphics[scale=0.4]{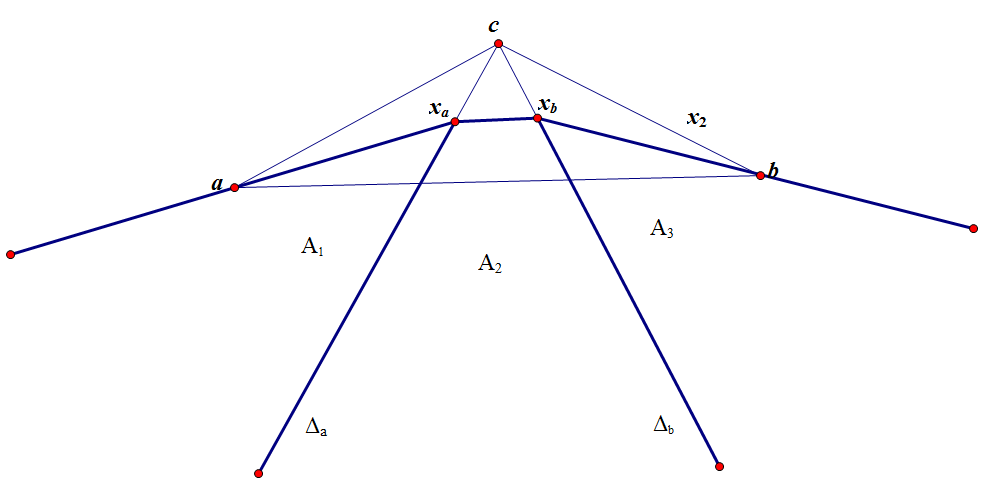}\\
\end{center}
\caption{}
\end{figure}

Let  $G_i$ be MLE of $F|_{\omega_i}$  ,   for $i\in \{1,2,3\}$.
 We define a 1-field $G$ on $\overline{\Omega}$ by $G(x)=G_i(x)$ if $x\in {A_i}$ for $i\in \{1,2,3\}$.\\
 We will prove that $G\in\cF^1(\Omega)$ and $\Lip(\nabla g;\Omega  )<\Gamma^1(G;\Omega )$.

For all $x \in \Omega$,  there exists $r>0$ such that $B(x,r) \subset \Omega$. For all $h \in B(x,r)$, we have
$$
\begin{array}{lll}
| g_{x+h}  - g_{x} - \langle D_xg ; h \rangle| & = & | G(x+h)(x+h) - G(x)(x+h)|, \\
                                               & \leq & \dfrac{1}{2} \displaystyle \max_{i =1,2,3} \Gamma^1(G_i;A_i) \|h \| ^2.
\end{array}
$$
Hence $g  \in \mathcal{C}^1(\Omega,\R)$.

Thus, by applying Lemma \ref{lem:libe} and Proposition \ref{pro:halAB} we have
\bq\label{eq:3}
\Lip(\nabla g;\Omega ) \le \displaystyle \max_{i =1,2,3} \Lip(\nabla g;{A_i})  \leq \displaystyle \max_{i =1,2,3} \Gamma^1(F;A_i) \leq k\kappa.
\eq

Therefore, by applying Corollary \ref{cor:gam<lip} we have $G\in\cF^1(\Omega)$.

On the other hand, we have
\bq\label{eq:4}
\Gamma^1(G;{\Omega)}=\Gamma^1(G;\overline{\Omega})\ge \Gamma^1(G;a,b)=\kappa .
\eq
From (\ref {eq:3}) and (\ref {eq:4}). We have 
\bqq
\Lip(\nabla g;\Omega  )\le k\kappa\le k\Gamma^1(G;\Omega )<\Gamma^1(G;\Omega ).
\eqq
\end{proof}
\begin{remark}
With the same notation as the proof of Proposition \ref{co-ex:lipgam}. There exist an open strictly convex $\Omega'$  subset of $\Omega$   such that $a,b\in \overline{\Omega'}$, and we have 
\bqq
\Lip(\nabla g;\Omega'  )\le\Lip(\nabla g;\Omega  )\le k\kappa\le k\Gamma^1(G;\Omega' )< \Gamma^1(G;\Omega' ).
\eqq
Thus in Proposition \ref{co-ex:lipgam} we can replace {\it $\Omega$ open convex} by {\it $\Omega$ open strictly convex}.

Moreover, when we let $x_a$, $x_b$ such that ${\rm dist}(x_a,[a,b])$ and ${\rm dist}(x_b,[a,b])$ converge  to $0$. 
 Then the smallest constant $k$ that is chosen satisfying (\ref{equ:consk})
converges to $\frac{\sqrt{3} }{2}$. An interesting question is that what 
is the optimal constant $c$ that is the largest constant and satisfies $\Lip(\nabla g,\Omega)\ge c\Gamma^1(F,\Omega)$ for all 
$\Omega$ open convex set and for all $F\in\cF^1(\Omega)$ ? We do not  exact value of the optimal constant $c$, but from above 
consideration and Proposition 8, we obtain
$c\in[\frac{1}{2},\frac{\sqrt{3}}{2} ]$.

\end{remark}

\begin{theorem} Let $\Omega$ be an open set in $\R^n$ and let $F\in \cF^1(\overline\Omega)$. We have 
\bqq
\Gamma^1(F;\Omega)= \max\left\{\Lip(Df;\Omega),\Gamma^1(F;\partial\Omega)\right\}.
\eqq
\end{theorem}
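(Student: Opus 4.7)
I will prove the two inequalities separately.

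\emph{Lower bound.} $\Gamma^1(F;\Omega)\ge\Lip(Df;\Omega)$ is immediate from Proposition~\ref{pro:halAB}, since $B_{a,b}(F)\le\Gamma^1(F;a,b)$ for every $a\ne b$ in $\Omega$. For $\Gamma^1(F;\Omega)\ge\Gamma^1(F;\partial\Omega)$, note that $F\in\cF^1(\overline\Omega)$ forces $f$ and $Df$ to be (Lipschitz) continuous on $\overline\Omega$ (using the bounds on $A_{a,b}(F)$ and $B_{a,b}(F)$ given by Proposition~\ref{pro:halAB}), so the same proposition makes $(x,y)\mapsto\Gamma^1(F;x,y)$ continuous on $\{x\ne y\}\subset\overline\Omega\times\overline\Omega$. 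Density of $\Omega$ in $\overline\Omega$ then gives $\Gamma^1(F;\Omega)=\Gamma^1(F;\overline\Omega)\ge\Gamma^1(F;\partial\Omega)$.

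\emph{Upper bound, non-degenerate case.} Suppose for contradiction that $\kappa:=\Gamma^1(F;\Omega)>M:=\max\{\Lip(Df;\Omega),\Gamma^1(F;\partial\Omega)\}$ and fix $(x_n,y_n)\in\Omega\times\Omega$ with $\Gamma^1(F;x_n,y_n)\to\kappa$. By the contrapositive of Proposition~\ref{pro:lhomep} applied to any $\Omega'\subset\subset\Omega$, the pairs cannot eventually remain in a compact subset of $\Omega$ (otherwise $\kappa=\Lip(Df;\Omega)\le M$); passing to a subsequence, $(x_n,y_n)\to(x^*,y^*)\in\overline\Omega\times\overline\Omega$ with at least one limit in $\partial\Omega$. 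If $x^*\ne y^*$, continuity yields $\Gamma^1(F;x^*,y^*)=\kappa$: when both limits lie in $\partial\Omega$ we contradict $\kappa>\Gamma^1(F;\partial\Omega)$, and if, say, $x^*\in\Omega$, the argument of Proposition~\ref{pro:lipagatp} applies verbatim. An MLE $U$ of $F$ on $\R^n$ realises $\kappa$ on the pair $(x^*,y^*)$, so \cite[Lemmas~8 and 10]{ELG2} furnish $c\in B_{1/2}(x^*,y^*)$ on whose segment through $x^*$ (of positive length, since $c=x^*$ forces the other segment $[y^*,x^*]$ to carry the same property) one has $\Lip(\nabla u;s,t)=\kappa$. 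Openness of $\Omega$ at $x^*$ then produces distinct points of that segment inside $\Omega$ at which $\nabla u=Df$, forcing $\Lip(Df;\Omega)\ge\kappa$, a contradiction.

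\emph{Upper bound, coalescing case.} The remaining sub-case is $x^*=y^*=:p$, so $\|x_n-y_n\|\to 0$. When $p\in\Omega$, openness gives $B_{1/2}(x_n,y_n)\subset\Omega$ for all large $n$, and the proof of \cite[Proposition~2.4]{ELG1} yields $\Gamma^1(F;x_n,y_n)\le\Lip(Df;\Omega)$, contradicting $\kappa>\Lip(Df;\Omega)$. The genuinely delicate sub-case is $p\in\partial\Omega$, which I expect to be the main obstacle of the proof, because now $B_{1/2}(x_n,y_n)$ may always spill outside $\Omega$. My strategy is, for each $n$, to apply \cite[Lemmas~8 and 10]{ELG2} to a two-point MLE $U_n$ of $F|_{\{x_n,y_n\}}$, obtaining an extremal segment $[x_n,c_n]\subset\overline{B_{1/2}(x_n,y_n)}$ on which $\Lip(\nabla u_n)=\kappa_n:=\Gamma^1(F;x_n,y_n)\to\kappa$, and then to blow up $F$ and $\overline\Omega$ around $p$ at the vanishing scale $\tau_n=\|x_n-y_n\|$: the rescaled pairs $(\hat x_n,\hat y_n)$ remain at unit distance, a subsequential $C^1_{\rm loc}$-limit of the rescaled $u_n$ exists with Lipschitz gradient $\kappa$, and its data is determined on the tangent cone $T_p\overline\Omega$ by the Hessian-type 1-field of $F$ at $p$. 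At regular points where $T_p\overline\Omega$ has non-empty interior, the rate $\kappa$ is then witnessed by pairs in $T_p^\circ\overline\Omega$, which in the original scale produces pairs of $\Omega$ arbitrarily close to $p$ with $\|Df(x)-Df(y)\|/\|x-y\|\to\kappa$, giving $\Lip(Df;\Omega)\ge\kappa$ and the desired contradiction. The genuine difficulty is this last transfer, because the auxiliary MLEs $u_n$ only match $F$ at the two points $\{x_n,y_n\}$ and $\nabla f$ has no pointwise Hessian a priori; the heart of the proof is to use Rademacher for $\nabla\tilde f$ at $p$ together with a careful choice of the blow-up to keep the Lipschitz rate $\kappa$ alive in the limit.
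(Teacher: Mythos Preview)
Your lower bound is fine and matches the paper. The upper bound, however, has genuine gaps, and the paper's argument is both different and much simpler.

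\textbf{Gaps in your approach.} First, you extract a subsequence $(x_n,y_n)\to(x^*,y^*)\in\overline\Omega\times\overline\Omega$, but $\Omega$ is an arbitrary open set and need not be bounded; nothing prevents $\|x_n\|\to\infty$, and you do not address this. Second, and more seriously, the case $x^*=y^*=p\in\partial\Omega$ is not proved. Your blow-up sketch relies on (i) Rademacher for $\nabla\tilde f$ at the \emph{specific} point $p$, which is not an a.e.\ point you get to choose, and (ii) the tangent cone $T_p\overline\Omega$ having non-empty interior, which fails for general open $\Omega$ (cusps, dense complements, etc.). You acknowledge that ``the genuine difficulty is this last transfer'' and that a ``careful choice of the blow-up'' is needed; as written this is a plan rather than a proof, and for arbitrary open $\Omega$ I do not see how to close it.

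\textbf{What the paper does instead.} The paper bypasses all sequential and blow-up analysis by a global gluing argument. Take a MLE $G$ of $F|_{\partial\Omega}$ on $\R^n\setminus\Omega$, and define $H=F$ on $\Omega$, $H=G$ on $\R^n\setminus\Omega$. Using \cite[Proposition~2]{ELG2} on two-point fields (for mixed pairs $x\in\Omega$, $y\notin\Omega$, the broken segment $[x,c]\cup[c,y]$ meets $\partial\Omega$), one checks $H\in\cF^1(\R^n)$ with $\Gamma^1(H;\R^n)\le\Gamma^1(F;\Omega)$. On all of $\R^n$ one has $\Gamma^1(H;\R^n)=\Lip(\nabla h;\R^n)$, and the elementary segment-splitting Lemma~\ref{lem:libe} gives
\[
\Lip(\nabla h;\R^n)\le\max\{\Lip(\nabla h;\Omega),\Lip(\nabla h;\R^n\setminus\Omega)\}\le\max\{\Lip(Df;\Omega),\Gamma^1(F;\partial\Omega)\}.
\]
This yields the upper bound with no case analysis on limiting configurations and no regularity assumption on $\partial\Omega$.
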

\begin{proof}\text{}
From \cite[Proposition 2.10]{ELG1} we know that $\Gamma^1(F;\Omega)=\Gamma^1(F;\overline\Omega)$. Thus 
\bq\label{eq:fbr}
\Gamma^1(F;\Omega)\ge \Gamma^1(F;\partial\Omega). 
\eq
Furthermore, we know that $\Gamma^1(F;\Omega)\ge \Lip(Df;\Omega)$. Therefore, 
\bqq
\Gamma^1(F;\Omega)\ge \max\left\{\Lip(Df;\Omega),\Gamma^1(F;\partial\Omega)\right\}.
\eqq
Conversely, let us turn to the proof of the opposite inequality:
\bq\label{eq:ccmlg}
\Gamma^1(F;\Omega)\le \max\left\{\Lip(Df;\Omega),\Gamma^1(F;\partial\Omega)\right\}.
\eq
Let $F _{|\partial \Omega }$ be the restriction of F to $\partial \Omega$ and let G be a MLE of $F _{|\partial \Omega }$ on $\R^n\backslash \Omega$. We have $G=F$ on $\partial\Omega$ and
\bq\label{eq:qfmr}
\Gamma^1(G;\R^n\backslash \Omega)=\Gamma^1(F;\partial \Omega).
\eq
We define 
\bqq
H(x) \triangleq \left\{ \begin{array}{ll}
F(x), \text { if }x\in\Omega,\\
G(x), \text { if }x\in\R^n\backslash \Omega.
\end{array}  \right.
\eqq
*Step 1:  We will prove that $H\in\cF^1(\R^n)$. Indeed, let $x,y\in\R^n$ ($x\ne y$). We have three cases:\\
{\it Case 1:} If $x,y\in \Omega$ ($x\ne y$) then $\Gamma^1(H;x,y)=\Gamma^1(F;x,y)\le \Gamma^1(F;\Omega)$.\\
{\it Case 2:} If $x,y\in\R^n\backslash\Omega$ ($x\ne y$) then since (\ref{eq:fbr}) and (\ref{eq:qfmr}) we have  
\bqq
\Gamma^1(H;x,y)=\Gamma^1(G;x,y)\le \Gamma^1(G,\R^n\backslash \Omega)=\Gamma^1(F;\partial \Omega)\le\Gamma^1(F,\Omega).
\eqq 
{\it Case 3:} If $x\in\Omega$ and $y\in\R^n\backslash\Omega$. Let $H_{|\{x,y\}}$ be the restriction of $H$ to $\dom (H_{|\{x,y\}})=\{x,y\}$. From \cite[Proposition 2]{ELG2}, there exists $c\in B_{1/2}(x,y)$ such that: 
\bqq
\Gamma^1(H;x,y)\le \max\{\Gamma^1(H;x,z),\Gamma^1(H;z,y)\}, ~\text{for all } z\in [x,c]\cup[y,c].
\eqq
Let $z\in \left([x,c]\cup[y,c]\right)\cap\partial \Omega$, we obtain
\bqq
\Gamma^1(H;x,y)\le \max\{\Gamma^1(H;x,z),\Gamma^1(H;z,y)\}.
\eqq
Moreover, since $x,z\in\overline \Omega$ we get
\bqq
\Gamma^1(H;x,z)&=&\Gamma^1(F;x,z)\le \Gamma^1(F;\overline \Omega)=\Gamma^1(F;\Omega),
\eqq
and since $y,z\in\R^n\backslash\Omega$ we get
\bqq
\Gamma^1(H;z,y)&=&\Gamma^1(G;z,y)\le \Gamma^1(G;\R^n\backslash\Omega)=\Gamma^1(F;\partial\Omega)\le \Gamma^1(F;\Omega).
\eqq
Therefore $\Gamma^1(H;x,y)\le \Gamma^1(F;\Omega).$\\
Combining these three cases we have $\Gamma^1(H;\R^n)\le \Gamma^1(F;\Omega)<+\infty.$ This implies that $H\in\cF^1(\R^n)$.

*Step 2: We will prove (\ref{eq:ccmlg}). Since $H\in\cF^1(\R^n)$, we have $\Gamma^1(H;\R^n) = \Lip(\nabla h;\R^n)$  by  (\cite[Proposition 2.4]{ELG1}). Thus 
 $$\Gamma^1(F;\Omega)=\Gamma^1(H;\Omega)\le \Gamma^1(H;\R^n)=\Lip(\nabla h;\R^n).$$ 
 On the other hand, $\Lip(Df;\Omega)=\Lip(\nabla h;\Omega)$ and $$\Gamma^1(F;\partial\Omega)=\Gamma^1(G;\R^n\backslash\Omega)=\Gamma^1(H;\R^n\backslash\Omega)\ge \Lip(\nabla h;\R^n\backslash\Omega).$$ 
Therefore, to prove (\ref{eq:ccmlg}), it suffices to
show that 
\bqq
\Lip(\nabla h;\R^n)\le \max\left\{\Lip(\nabla h;\Omega),\Lip(\nabla h;\R^n\backslash\Omega)\right\}.
\eqq
The final inequality is true from Lemma  \ref{lem:libe}.
\end{proof}



\section{Sup-Inf Explicit Formulas for Minimal Lipschitz Extensions for 1-Fields on $\mathbb{R}^n$}\label{chap 3}

In this section let $\Omega$ be a nonempty subset of $\cR^n$. Fix $F\in \cF^1(\Omega)$ and define $\kappa \triangleq \Gamma^1(F;\Omega)$. We will give two explicit formulas for extremal extension problem of  $F$ on $\cR^n$. 
\begin{definition}\label{Def.14}
 For any $a,b\in\Omega$ and $x\in\cR^n$, we define
\bqq
v_{a,b}&\triangleq&\frac{1}{2}(D_af+D_bf)+\frac{\kappa}{2}(b-a),\\
\alpha_{a,b}&\triangleq& 2\kappa(f_a-f_b)+\kappa\left\langle D_a f+D_b f,b-a\right\rangle-\frac{1}{2}\|D_a f-D_b f\|^2+\frac{\kappa^2}{2}\|a-b\|^2\\
&\triangleq& (\kappa A_{a,b}(F)-\frac{B_{a,b}(F)^2}{2}+\frac{\kappa^2}{2})\|a-b\|^2,\\
\beta_{a,b}(x)&\triangleq&\left\|\frac{1}{2}(D_af-D_bf)+\frac{\kappa}{2}(2x-a-b)\right\|^2,\\
\eqq
We know (see [\cite{ELG1}, Theorem 2.6]) that  $\alpha_{a,b}\ge 0$ thus we can define
\bqq
r_{a,b}(x)&\triangleq&\sqrt{\alpha_{a,b}+\beta_{a,b}(x)},\\
\Lambda_x&\triangleq&\left\{v\in\cR^n:\|v-v_{a,b}\|\le r_{a,b}(x),\forall a,b\in\Omega\right\}.
\eqq
\end{definition}

\begin{definition} For any $a\in\Omega$, $x\in\R^n$ and $v\in \Lambda_x$ we define 
\bqq
\Psi^+(F,x,a,v)&\triangleq&f_a+\frac{1}{2}\langle D_af+v,x-a\rangle+\frac{\kappa}{4}\|a-x\|^2-\frac{1}{4\kappa}\|D_af-v\|^2,\\
\Psi^-(F,x,a,v)&\triangleq&f_a+\frac{1}{2}\langle D_af+v,x-a\rangle-\frac{\kappa}{4}\|a-x\|^2+\frac{1}{4\kappa}\|D_af-v\|^2,\\
u^+(x)&\triangleq&\sup\limits_{v\in\Lambda_x}\inf\limits_{a\in\Omega} \Psi^+(F,x,a,v),\\
\eqq
\end{definition}
An important part of the proof of the  [Theorem 2.6, \cite{ELG1}] shows that  $\Lambda_x$ is nonempty  for all $x \in \R^n$. Since   $\Lambda_x$ is nonempty and compact, and  the map $v\longmapsto \Psi^+(F,x,a,v)$ 
is continuous, the map    $u^+$ is well defined.

\begin{proposition}\label{pro:xddn} Fix $x\in\R^n$. Then there exists a unique element $v_x^+\in \Lambda_x$ such that 
$$u^+(x)=\inf\limits_{a\in\Omega} \Psi^+(F,x,a,v_x^+).$$
\end{proposition}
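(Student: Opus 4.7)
The plan is to show that the map $\phi: v \mapsto \inf_{a\in\Omega}\Psi^+(F,x,a,v)$ is strictly concave and upper semi-continuous on the nonempty, convex, compact set $\Lambda_x$, which will immediately yield both existence and uniqueness of the maximizer $v_x^+$.

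First I would establish the topological properties of $\Lambda_x$. Non-emptiness is already granted by the proof of Theorem 2.6 in \cite{ELG1}, as noted in the paragraph preceding the statement. Convexity and closedness follow at once from the definition
\bqq
\Lambda_x=\bigcap_{a,b\in\Omega}\overline{B}\bigl(v_{a,b},\, r_{a,b}(x)\bigr),
\eqq
as an intersection of closed balls in $\R^n$. Boundedness is automatic since $\Lambda_x$ is contained in any single such ball (choose $a=b$ or any pair). Thus $\Lambda_x$ is compact and convex.

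Second, I would expose the quadratic structure of $\Psi^+$ in $v$. Expanding $-\frac{1}{4\kappa}\|D_af-v\|^2$ and collecting terms gives
\bqq
\Psi^+(F,x,a,v) = -\frac{1}{4\kappa}\|v\|^2+\ell_a(v)+c_a,
\eqq
where $\ell_a(v)=\tfrac12\langle v,x-a\rangle+\tfrac{1}{2\kappa}\langle D_af,v\rangle$ is affine in $v$ and $c_a$ is independent of $v$. The decisive point is that the quadratic part $-\tfrac{1}{4\kappa}\|v\|^2$ is the \emph{same} for every $a\in\Omega$ (the case $\kappa=0$ is trivial: $F$ extends uniquely to an affine function, and both $u^+$ and $v_x^+$ are determined uniquely). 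Consequently,
\bqq
\phi(v)=\inf_{a\in\Omega}\Psi^+(F,x,a,v)=-\frac{1}{4\kappa}\|v\|^2+\inf_{a\in\Omega}\bigl(\ell_a(v)+c_a\bigr).
\eqq
The second summand is concave as a pointwise infimum of affine functions, and adding the strictly concave quadratic $-\tfrac{1}{4\kappa}\|v\|^2$ yields that $\phi$ is strictly concave on $\R^n$. Moreover, $\phi$ is upper semi-continuous on $\R^n$, being an infimum of a family of continuous functions of $v$.

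Finally, I would conclude: an upper semi-continuous function on a nonempty compact set attains its supremum, so there exists $v_x^+\in\Lambda_x$ with $\phi(v_x^+)=\sup_{v\in\Lambda_x}\phi(v)=u^+(x)$. Strict concavity of $\phi$ combined with convexity of $\Lambda_x$ then forces uniqueness: if $v_1\ne v_2$ were two maximizers, the midpoint $\tfrac12(v_1+v_2)\in\Lambda_x$ would give $\phi(\tfrac12(v_1+v_2))>\tfrac12\phi(v_1)+\tfrac12\phi(v_2)=u^+(x)$, a contradiction. There is no real obstacle here; the entire argument hinges on the single structural observation that the quadratic term in $\Psi^+(F,x,a,v)$ with respect to $v$ does not depend on $a$, which is what promotes the infimum from merely concave to strictly concave.
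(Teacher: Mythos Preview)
Your proof is correct and follows essentially the same route as the paper's: both establish existence from compactness and nonemptiness of $\Lambda_x$, and uniqueness from the strict concavity of $v\mapsto\inf_{a\in\Omega}\Psi^+(F,x,a,v)$, the key point being that the quadratic part $-\tfrac{1}{4\kappa}\|v\|^2$ is independent of $a$. The paper computes the concavity modulus directly via $g_a(tv_1+(1-t)v_2)=tg_a(v_1)+(1-t)g_a(v_2)+\tfrac{1}{4\kappa}t(1-t)\|v_1-v_2\|^2$, whereas you make the decomposition $\Psi^+=-\tfrac{1}{4\kappa}\|v\|^2+\ell_a(v)+c_a$ explicit and invoke upper semi-continuity; these are cosmetic differences only.
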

\begin{proof}
Since $\Lambda_{x}$  is compact and nonempty, there exists  $v_x^+\in \Lambda_{x}$ such that 
\bqq
u^+(x)=\inf\limits_{a\in\Omega} \Psi^+(F,x,a,v_x^+).
\eqq
We will prove that $v_x^+$  is uniquely determined. Indeed, for any $a\in\Omega$ we define
\bqq
g_a(v)=\Psi^+(F,x,a,v),\text{ for }v\in\Lambda_x.
\eqq
Then for any $t\in(0,1)$ and  
 $(v_1,v_2) \in \R^n\times\R^n$ 
\bqq
g_a(tv_1+(1-t)v_2)= tg_a(v_1)+(1-t)g_a(v_2)+\frac{1}{4\kappa}t(1-t)\|v_1-v_2\|^2.
\eqq
Thus $g_a$ is strictly concave.
If we define $g(v)=\inf\limits_{a\in\Omega}g_a(v)$ for $v\in\Lambda_x$ then for any $t\in(0,1)$ and  $(v_1,v_2) \in \R^n\times\R^n$ 
\bqq
g(tv_1+(1-t)v_2) \geq tg(v_1)+(1-t)g(v_2)+\frac{1}{4\kappa}t(1-t)\|v_1-v_2\|^2.
\eqq
Thus $g$ is also strictly concave. \\To prove $v_x^+$ is uniquely determined, we need to prove that if $g(v)=g(v_x^+)$  then $v=v_x^+$, where $v \in\Lambda_x$. Assume by contradiction there exists $v\in \Lambda_x$ such that $v\ne v_x^+$ and $g(v)=g(v_x^+)$. Since $\Lambda_x$ is a convex set we have $tv+(1-t)v_x^+\in\Lambda_x$ for $t\in(0,1)$. Thus $g(tv+(1-t)v_x^+)> tg(v)+(1-t)g(v_x^+)=g(v_x^+)$, which is a contradiction to $g(v_x^+)=\sup\limits_{v\in\Lambda_x}g(v)$.
\end{proof}

The previous proposition  allows to define   the following one-field

\begin{definition}\label{uplus}
\begin{align}
U^+: \,  x \in \R^n  \mapsto U^+(x)(a) \triangleq u^+(x) + \langle D_xu^+ ; a - x\rangle, a \in \R^n.
\end{align}
with

$$
u^+(x)  \triangleq \max\limits_{v\in\Lambda_x}\inf\limits_{a\in\Omega} \Psi^+(F,x,a,v),~~~ D_xu^+\triangleq \mbox{arg}\displaystyle\max_{v\in\Lambda_x}\inf\limits_{a\in\Omega} \Psi^+(F,x,a,v).
$$
\end{definition}

Using the proof of [Theorem 2.6, \cite{ELG1}], we can easily show the following proposition

\begin{proposition}\label{pro:the2.61}
Let $x_0\in\R^n$ and define $\Omega_1=\Omega\cup \{x_0\}$. Let $U$ 
an extension of $F$ on $\Omega_1$. 
 Then the following conditions are equivalent\\
$(i)$ $U$ is a MLE of $F$ on $\Omega_1.$ \\
$(ii)$ $$\sup\limits_{a\in\Omega}\Psi^-(F,x,a,D_{x}u)\le u(x) \le \inf\limits_{a\in\Omega}\Psi^+(F,x,a,D_{x}u),~~ \forall x\in\Omega_1.$$

Furthermore
$$\left[\sup\limits_{a\in\Omega}\Psi^-(F,x,a,D_{x}u)\le \inf\limits_{a\in\Omega}\Psi^+(F,x,a,D_{x}u)\right]\Leftrightarrow \left[D_{x}u\in\Lambda_{x}\right],~~ \forall x\in\Omega_1.$$
\end{proposition}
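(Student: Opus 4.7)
The plan is to reduce the MLE condition to a family of pairwise inequalities $\Gamma^1(U;x,a) \le \kappa$, indexed by $(x,a) \in \Omega_1 \times \Omega$, and to rewrite each such inequality in the $\Psi^\pm$ format by direct algebraic manipulation starting from Proposition \ref{pro:halAB}. First I would observe that, since $U$ extends $F$, one automatically has $\Gamma^1(U;\Omega_1) \ge \Gamma^1(F;\Omega) = \kappa$, so $U$ is a MLE iff $\Gamma^1(U;x,a) \le \kappa$ for every pair $x \ne a$ in $\Omega_1$; for $x \in \Omega$ this is automatic from $\Gamma^1(F;\Omega)=\kappa$, and condition $(ii)$ at $x \in \Omega$ is likewise automatic, so the real content of both sides lies at $x = x_0$.

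By Proposition \ref{pro:halAB}, the inequality $\Gamma^1(U;x,a) \le \kappa$ is equivalent to $2\kappa|A_{x,a}(U)| + B_{x,a}(U)^2 \le \kappa^2$, which splits into the two one-sided bounds $\pm\kappa A_{x,a}(U) \le (\kappa^2 - B_{x,a}(U)^2)/2$. Multiplying each by $\|x-a\|^2/(2\kappa)$ and substituting the explicit formulas for $A_{x,a}(U)$ and $B_{x,a}(U)$, a routine rearrangement turns them into
\[
\Psi^-(F,x,a,D_xu) \le u_x \le \Psi^+(F,x,a,D_xu).
\]
Taking the supremum on the left and the infimum on the right over $a \in \Omega$ produces exactly condition $(ii)$, which establishes $(i) \Leftrightarrow (ii)$.

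For the supplementary equivalence, fix $x \in \Omega_1$, set $v = D_xu$, and observe that $\sup_a \Psi^-(F,x,a,v) \le \inf_a \Psi^+(F,x,a,v)$ is equivalent to $\Psi^+(F,x,b,v) \ge \Psi^-(F,x,a,v)$ for every ordered pair $(a,b) \in \Omega \times \Omega$. The key identity, which I would verify by direct expansion and completion of the square in $v$, is
\[
2\kappa\bigl[\Psi^+(F,x,b,v) - \Psi^-(F,x,a,v)\bigr] = \alpha_{b,a} + \beta_{b,a}(x) - \|v - v_{b,a}\|^2,
\]
where all $v$-dependent cross-terms on the left collapse into the perfect square $-\|v - v_{b,a}\|^2$ and the residual scalar reassembles into $\alpha_{b,a} + \beta_{b,a}(x)$. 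Hence the inequality holds for every $(a,b)$ iff $\|v - v_{b,a}\|^2 \le \alpha_{b,a} + \beta_{b,a}(x)$ for every ordered pair, and after relabeling $(a,b) \leftrightarrow (b,a)$ this is exactly the defining condition $v \in \Lambda_x$.

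The main obstacle is precisely the algebraic identity above: one must track signs carefully so that the $v$-dependence collapses into a perfect square centered at $v_{b,a}$ rather than $v_{a,b}$, and recognize the residual scalar as $\alpha_{b,a} + \beta_{b,a}(x)$. Everything else is routine bookkeeping that follows from Proposition \ref{pro:halAB} together with the observation that adjoining a single new point to $\Omega$ introduces exactly one new family of pairwise constraints.
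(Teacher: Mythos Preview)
Your proposal is correct and is essentially the direct verification that the paper alludes to: the paper itself gives no proof beyond the sentence ``Using the proof of [Theorem 2.6, \cite{ELG1}], we can easily show the following proposition,'' and your argument reconstructs that content. The reduction of $\Gamma^1(U;x,a)\le\kappa$ to the $\Psi^\pm$ sandwich via $\sqrt{A^2+B^2}+|A|\le\kappa \iff 2\kappa|A|+B^2\le\kappa^2$ is in fact carried out verbatim in the paper's proof of Theorem~\ref{theo:W+}, and your completion-of-the-square identity for $2\kappa[\Psi^+(F,x,b,v)-\Psi^-(F,x,a,v)]$ is exactly the computation underlying the nonemptiness of $\Lambda_x$ in \cite{ELG1}.
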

\begin{corollary} \label{co:dlambda}Let $\Omega_1$ be a subset of $\R^n$ such that $\Omega\subset\Omega_1$. Let $G$ be a MLE of $F$ on $\Omega_1$. For all $x\in\Omega_1$, we have $D_xg\in\Lambda_x$ and 
$$\sup\limits_{a\in\Omega}\Psi^-(F,x,a,D_{x}g)\le g(x) \le \inf\limits_{a\in\Omega}\Psi^+(F,x,a,D_{x}g)\le u^+(x), ~~\forall x\in\Omega_1.$$
\begin{proof}
The proof is immediate from Proposition \ref{pro:the2.61}.
\end{proof}

\end{corollary}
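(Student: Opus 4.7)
The plan is to reduce this to a single application of Proposition \ref{pro:the2.61} by restricting $G$ to $\Omega\cup\{x\}$ for each $x\in\Omega_1$. The key observation is that such a restriction is itself a MLE of $F$, so Proposition \ref{pro:the2.61} applies pointwise.

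First, fix $x\in\Omega_1$ and set $\Omega_1^x \triangleq \Omega\cup\{x\}\subset\Omega_1$. I would consider the restriction $G_x \triangleq G|_{\Omega_1^x}$. Clearly $G_x$ extends $F$ on $\Omega_1^x$, and the sandwich
\[
\kappa = \Gamma^1(F;\Omega) \le \Gamma^1(G_x;\Omega_1^x) \le \Gamma^1(G;\Omega_1) = \kappa
\]
(the first inequality because $G_x$ extends $F$, the second because $\Omega_1^x\subset\Omega_1$, the last because $G$ is assumed to be a MLE of $F$ on $\Omega_1$) forces $\Gamma^1(G_x;\Omega_1^x)=\kappa=\Gamma^1(F;\Omega)$. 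Hence $G_x$ is a MLE of $F$ on $\Omega_1^x=\Omega\cup\{x\}$.

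Next, I would apply Proposition \ref{pro:the2.61} to $G_x$ (which fits the setup $x_0 = x$, $\Omega_1 = \Omega\cup\{x\}$, $U = G_x$). The equivalence $(i)\Leftrightarrow(ii)$ immediately gives
\[
\sup_{a\in\Omega}\Psi^-(F,x,a,D_xg) \le g(x) \le \inf_{a\in\Omega}\Psi^+(F,x,a,D_xg),
\]
and the final clause of the proposition yields $D_xg\in\Lambda_x$.

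Finally, since $D_xg\in\Lambda_x$, the quantity $\inf_{a\in\Omega}\Psi^+(F,x,a,D_xg)$ is one of the values over which the supremum defining $u^+(x)$ is taken, so
\[
\inf_{a\in\Omega}\Psi^+(F,x,a,D_xg) \le \sup_{v\in\Lambda_x}\inf_{a\in\Omega}\Psi^+(F,x,a,v) = u^+(x).
\]
Concatenating the two displayed chains completes the proof. There is no real obstacle here; the only thing to be careful about is the monotonicity $\Gamma^1(F;\Omega)\le\Gamma^1(G_x;\Omega\cup\{x\})\le\Gamma^1(G;\Omega_1)$ used to transfer the MLE property from $G$ to its restriction $G_x$, after which Proposition \ref{pro:the2.61} does all the work.
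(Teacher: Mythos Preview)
Your proof is correct and is precisely the natural unpacking of the paper's one-line justification ``immediate from Proposition~\ref{pro:the2.61}'': you restrict $G$ to $\Omega\cup\{x\}$, verify via the obvious sandwich that this restriction is still a MLE, invoke Proposition~\ref{pro:the2.61}, and read off the final inequality from the definition of $u^+$. There is nothing to add.
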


\begin{theorem}\label{overUplus}
 The $1$-field $U^+$ is the unique over extremal extension of $F$. 
\end{theorem}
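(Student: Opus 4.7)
The statement splits into three assertions: $(i)$ the $1$-field $U^+$ extends $F$; $(ii)$ $U^+$ is itself a minimal Lipschitz extension of $F$ on $\R^n$; and $(iii)$ $U^+$ pointwise dominates every MLE of $F$ and is the only MLE with this property. I would prove them in this order, and the bulk of the work lies in $(ii)$.

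For $(i)$, fix $x \in \Omega$ and observe that $\Psi^+(F,x,x,v) = f_x - \frac{1}{4\kappa}\|D_x f - v\|^2$, so specializing $a = x$ in the infimum defining $u^+(x)$ yields $\inf_{a\in\Omega}\Psi^+(F,x,a,v) \le f_x - \frac{1}{4\kappa}\|D_x f - v\|^2$ for every $v \in \Lambda_x$. Applying Proposition \ref{pro:the2.61} to the trivial MLE $U = F$ on $\Omega$ supplies the matching lower bound $\inf_{a\in\Omega}\Psi^+(F,x,a,D_x f) \ge f_x$, so $u^+(x) = f_x$ and the maximum is attained only at $v = D_x f$; Proposition \ref{pro:xddn} therefore identifies $D_x u^+$ with $D_x f$.

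The crux is $(ii)$. By Proposition \ref{pro:halAB} it suffices to show $\Gamma^1(U^+;x,y) \le \kappa$ for each pair $x \neq y$ in $\R^n$, which is equivalent to the sandwich
\[
\Psi^-\bigl(U^+|_{\{x\}},\, y, x, D_y u^+\bigr) \;\le\; u^+(y) \;\le\; \Psi^+\bigl(U^+|_{\{x\}},\, y, x, D_y u^+\bigr)
\]
together with its $x \leftrightarrow y$ counterpart. My plan is to introduce the $1$-field $F_x$ on $\Omega \cup \{x\}$ obtained by adjoining the datum $(u^+(x), D_x u^+)$ to $F$. Since $D_x u^+ \in \Lambda_x$ and $u^+(x) = \inf_a \Psi^+(F,x,a,D_x u^+)$, Proposition \ref{pro:the2.61} applied at $x_0 = x$ shows that $F_x$ is an MLE of $F$ on $\Omega \cup \{x\}$, so $\Gamma^1(F_x;\Omega \cup \{x\}) = \kappa$ and the sets $\Lambda_y^{F_x}$ and functions $\Psi^\pm(F_x,\cdot)$ are defined at the same constant. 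The key compatibility I then need is
\[
D_y u^+ \in \Lambda_y^{F_x} \qquad \text{and} \qquad \inf_{a\in \Omega \cup \{x\}} \Psi^+(F_x,y,a,D_y u^+) = u^+(y),
\]
which, compared with the definition of $u^+(y)$ from $F$, reduces to showing $\Psi^+(F_x,y,x,D_y u^+) \ge u^+(y)$ and the single new ball constraint $\|D_y u^+ - v_{x,a}^{F_x}\| \le r_{x,a}^{F_x}(y)$ for every $a \in \Omega$. I expect these to follow by unfolding the $\sup$--$\inf$ definitions of $u^+(x), D_x u^+$ and invoking $D_x u^+ \in \Lambda_x$ through the formulas in Definition \ref{Def.14}; this is where the technical burden of the theorem concentrates. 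Once compatibility is in place, a second application of Proposition \ref{pro:the2.61} at $x_0 = y$, with starting domain $\Omega \cup \{x\}$, yields the desired $\Psi^\pm$ sandwich for the pair $(x,y)$, and symmetry in $x,y$ closes the argument.

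Part $(iii)$ then follows quickly. Corollary \ref{co:dlambda} gives $g(x) \le u^+(x)$ for every MLE $G$ of $F$ on $\R^n$, so $U^+$ is over extremal. If $\widetilde G$ is another over extremal MLE, then $\widetilde g$ and $u^+$ pointwise dominate each other, hence coincide as functions; since $U^+, \widetilde G \in \cF^1(\R^n) \subset \mathcal{C}^{1,1}(\R^n,\R)$ with $Du^+ = \nabla u^+$ and $D\widetilde g = \nabla \widetilde g$, equality of the functions forces equality of the gradients, whence $\widetilde G = U^+$.
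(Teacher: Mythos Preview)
Your treatment of (i) and (iii) is fine and matches the paper's. The gap is in (ii).

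What you call the ``key compatibility'' includes the inequality $\Psi^+(F_x,y,x,D_y u^+) \ge u^+(y)$. Unwinding the definition of $\Psi^+$, this is exactly the inequality
\[
u^+(y) \;\le\; u^+(x) + \tfrac12\langle D_x u^+ + D_y u^+, y-x\rangle + \tfrac{\kappa}{4}\|x-y\|^2 - \tfrac{1}{4\kappa}\|D_x u^+ - D_y u^+\|^2,
\]
which is one of the two inequalities equivalent to $\Gamma^1(U^+;x,y)\le\kappa$. Likewise, the ball constraint you need for $D_y u^+ \in \Lambda_y^{F_x}$ coming from the pair $(x,x)$ is $\|D_y u^+ - D_x u^+\|\le \kappa\|y-x\|$, again part of what is to be proved. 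So the ``compatibility'' step is not a technical computation to be unfolded from Definition~\ref{Def.14}; it \emph{is} the statement $\Gamma^1(U^+;x,y)\le\kappa$, and your argument as written is circular. There is no evident reason why the $\sup$--$\inf$ formulas alone, without further input, should force these inequalities between two points $x,y\notin\Omega$.

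The paper avoids this entirely by bringing in outside information: it invokes Theorem~\ref{theo:tqwells}, which produces (via Wells' construction) an over extremal MLE $W^+$ of $F$ on $\R^n$, and then shows $U^+=W^+$. The inequality $w^+(x)\le u^+(x)$ is immediate from Corollary~\ref{co:dlambda} since $W^+$ is an MLE. For the reverse inequality the paper uses exactly the piece of your argument that \emph{does} work: the $1$-field $G=F_x$ on $\Omega\cup\{x\}$ with $G(x)=U^+(x)$ is an MLE of $F$ (your Proposition~\ref{pro:the2.61} argument), so by Theorem~\ref{theo:moronggafa} it extends to an MLE $\tilde G$ on $\R^n$, and the over extremality of $W^+$ gives $u^+(x)=\tilde g(x)\le w^+(x)$. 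Equality of gradients then follows from the uniqueness in Proposition~\ref{pro:xddn}. Thus $U^+$ inherits the MLE and over extremal properties from $W^+$ rather than being shown to have them directly. The existence theorem for $W^+$ is the missing ingredient in your approach.
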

\begin{proof}
 Applying Theorem \ref{theo:tqwells}, there exists $W^+$ an over extremal extension of $F$ on $\R^n$.  Let $w^+$  be the canonical associates to $W^+$. 
We will prove $U^+=W^+$ on $\R^n$.

*Step 1: Let $x\in\Omega$. Since  $W^+$ is an extension of $F$ we have $W^+(x)=F(x)$.
Noting that $\Lambda_x$ has a unique element to be $D_xf$ (since $x\in\Omega$ and from the definition of $\Lambda_x$).  So that $D_xu^+ = D_xf$ and
\bqq
u^+(x)&=&\inf\limits_{a\in\Omega} \Psi^+(F,x,a,D_xf).
\eqq
From Proposition \ref{pro:the2.61} we have 
\bqq
\Psi^+(F,x,a,D_xf)\ge f(x), \text{ for any }a\in\Omega.
\eqq
Furthermore, when $a=x$ we have $\Psi^+(F,x,x,D_xf)=f(x)$. Therefore $u^+(x)= f(x)$. \\
Conclusion for all $x \in \Omega$,   $U^+(x) =W^+(x)$.

*Step 2:  Let  $x\in \R^n\backslash\Omega$. We first prove that $u^+(x)\ge w^+(x)$. Since $W^+$ is a MLE of $F$ on $\R^n$, we can apply Proposition \ref{pro:the2.61} to obtain $D_{x}w\in\Lambda_{x}$ and 
\bqq
w^+(x)\le\inf\limits_{a\in\Omega} \Psi^+(F,x,a,D_{x}w)\le u^+(x).
\eqq

Conversely, we will prove that $u^+(x)\le w^+(x)$. Applying Proposition \ref{pro:xddn}, $D_xu^+$ is the unique element in $\Lambda_{x}$ such that $u^+(x)=\inf\limits_{a\in\Omega} \Psi^+(F,x,a,D_{x}u^+)$. 

 We define the 1-field $G$ of domain  $\Omega\cup\{x\}$ as $$G(y)  \triangleq F(y),~ y \in \Omega ~{\rm  and }~ G(x) \triangleq  U^+(x)$$
Since $D_{x}g=D_{x}u^+\in\Lambda_{x}$, we can apply Proposition \ref{pro:the2.61}  
 to have

\bq\label{eq:dkcpr}
g(x)= u^+(x)=\inf\limits_{a\in\Omega} \Psi^+(F,x,a,D_{x}g)\ge \sup\limits_{a\in\Omega}\Psi^-(F,x,a,D_{x}g).
\eq
From (\ref{eq:dkcpr}) and Proposition \ref{pro:the2.61}, we have $G$ to be a MLE of $F$ on $\Omega\cup\{x\}$.

By applying [Theorem 2.6, \cite{ELG1}] 
 there exists $\tilde{G}$ to be a MLE of $G$ on $\R^n$.

 Since ${\rm dom}(F) \subset  {\rm dom}(G)$,    $\tilde{G}$ is also a MLE of $F$ on $\R^n$. Since $W^+$ is over extremal extension of $F$ on $\R^n$, we have $\tilde{g}(x)\le w^+(x)$ for all $x\in\R^n$. Thus $u^+(x)=g(x)=\tilde{g}(x)\le w^+(x)$. 
Combining this with $w^+(x)\le u^+(x)$ we have $u^+(x)= w^+(x)$.

Finally, using Proposition \ref{pro:the2.61} and the previous equality we have
$$
u^+(x) =  w^+(x)\le\inf\limits_{a\in\Omega}\Psi^+(F,x,a,D_x w^+)\le u^+(x)=\inf\limits_{a\in\Omega}\Psi^+(F,x,a,D_{x} u^+).
$$
Thus we obtain the following equality
$$
\inf\limits_{a\in\Omega}\Psi^+(F,x,a,D_x w^+)=\inf\limits_{a\in\Omega}\Psi^+(F,x,a,D_{x} u^+).
$$
Therefore $D_x w^+=D_{x} u^+$ by Proposition \ref{pro:xddn}.

Conclusion for all $x \in \R^n$,   $U^+(x) =W^+(x)$.

The uniqueness of an over extremal extension of $F$  arises because  $W^+$ be to any   over extremal extension of $F$.

\end{proof}

Thanks to result of Theorem \ref{theo:tqwells}. Indeed, this result allows to define an under extremal extension of $F$. That is

\begin{definition} For any $a\in\Omega$, $x\in\R^n$ and $v\in \Lambda_x$ we define 
\bqq
u^-(x)&\triangleq&\inf\limits_{v\in\Lambda_x}\sup\limits_{a\in\Omega} \Psi^-(F,x,a,v).
\eqq
\end{definition}
Using the strict convexity of the map $v \longmapsto \Psi^-(F,x,a,v)$ and the compacity of $\Lambda_x$ as in the proof of proposition \ref{pro:xddn} (replacing concavity by convexity) we obtain the following proposition

\begin{proposition}\label{promoins:xddn} Let $x\in\R^n$. Then there exists a unique element $v_x^-\in \Lambda_x$ such that $u^-(x)=\sup\limits_{a\in\Omega} \Psi^-(F,x,a,v_x^-)$.
\end{proposition}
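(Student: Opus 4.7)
The plan is to mirror the proof of Proposition \ref{pro:xddn} step by step, swapping $\Psi^+$ with $\Psi^-$, concavity with convexity, and $\sup$ with $\inf$. Setting $h_a(v) \triangleq \Psi^-(F,x,a,v)$, the essential observation is that the only nonlinear dependence on $v$ in $\Psi^-$ is the term $\frac{1}{4\kappa}\|D_af - v\|^2$, whose sign is the opposite of the corresponding term in $\Psi^+$. A direct expansion via the identity
\[
\|D_af - (tv_1 + (1-t)v_2)\|^2 = t\|D_af - v_1\|^2 + (1-t)\|D_af - v_2\|^2 - t(1-t)\|v_1 - v_2\|^2,
\]
combined with the affineness in $v$ of the remaining part of $\Psi^-$, yields
\[
h_a(tv_1 + (1-t)v_2) = t\, h_a(v_1) + (1-t)\, h_a(v_2) - \frac{1}{4\kappa}\, t(1-t)\, \|v_1 - v_2\|^2,
\]
for all $t \in (0,1)$ and $v_1, v_2 \in \R^n$. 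Hence each $h_a$ is strictly convex with a defect that is \emph{independent of $a$}.

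Taking the supremum over $a\in\Omega$ preserves this quantitative convexity: for $h(v) \triangleq \sup_{a\in\Omega} h_a(v)$ one gets
\[
h(tv_1 + (1-t)v_2) \le t\, h(v_1) + (1-t)\, h(v_2) - \frac{1}{4\kappa}\, t(1-t)\, \|v_1 - v_2\|^2,
\]
which shows that $h$ is strictly convex on $\R^n$, hence on the convex set $\Lambda_x$ (see Definition \ref{Def.14}). For existence of a minimizer, I would note that $\Lambda_x$ is nonempty (from the proof of \cite[Theorem 2.6]{ELG1}) and compact (closed and bounded as an intersection of closed balls), while $h$ is lower semicontinuous as a supremum of the continuous maps $v \mapsto h_a(v)$; therefore $h$ attains its infimum on $\Lambda_x$ at some $v_x^- \in \Lambda_x$.

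Uniqueness then follows from strict convexity by the same argument used at the end of the proof of Proposition \ref{pro:xddn}: if some $v \in \Lambda_x$ with $v \neq v_x^-$ also satisfied $h(v) = h(v_x^-)$, then the midpoint $\frac{1}{2}(v + v_x^-) \in \Lambda_x$ would satisfy $h\bigl(\frac{1}{2}(v+v_x^-)\bigr) < \frac{1}{2}h(v) + \frac{1}{2}h(v_x^-) = h(v_x^-)$, contradicting the minimality of $v_x^-$. There is no genuine obstacle here; the only point requiring attention is the sign bookkeeping that confirms the sign flip between $\Psi^+$ and $\Psi^-$ produces convexity in place of concavity, after which the entire argument of Proposition \ref{pro:xddn} transfers verbatim.
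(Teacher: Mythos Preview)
Your proposal is correct and follows exactly the approach indicated in the paper, which simply says to repeat the proof of Proposition~\ref{pro:xddn} with convexity in place of concavity. If anything, you are slightly more careful than the paper: you explicitly justify existence via lower semicontinuity of $h=\sup_{a\in\Omega}h_a$ on the compact convex set $\Lambda_x$, whereas the paper leaves this step implicit.
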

This allows us to define the following $1$-field

\begin{definition}\label{u_moins}
\begin{align}
U^-: \,  x \in \R^n  \mapsto U^-(x)(a) \triangleq u^-(x) + \langle D_xu^- ; a - x\rangle,~ a \in \R^n.
\end{align}
with

$$
u^-(x)  \triangleq \min\limits_{v\in\Lambda_x}\sup \limits_{a\in\Omega} \Psi^-(F,x,a,v),~~~ D_xu^-\triangleq \mbox{arg}\displaystyle\min_{v\in\Lambda_x}\sup\limits_{a\in\Omega} \Psi^-(F,x,a,v).
$$
\end{definition}

\begin{theorem}
 The $1$-field $U^-$ is the unique under extremal extension of $F$. 
\end{theorem}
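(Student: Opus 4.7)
The plan is to mirror the proof of Theorem \ref{overUplus} with $\sup$ and $\inf$ exchanged and $\Psi^+$ replaced by $\Psi^-$. By Theorem \ref{theo:tqwells} there exists an under extremal extension $W^-$ of $F$ on $\R^n$; it will then suffice to prove $U^- = W^-$ pointwise, since this simultaneously shows that $U^-$ is an under extremal extension and implies uniqueness (as $W^-$ was taken to be an arbitrary under extremal extension).

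For $x \in \Omega$, the constraints defining $\Lambda_x$ (applied with $a = b = x$) force $\Lambda_x = \{D_xf\}$, so $D_xu^- = D_xf$ and $u^-(x) = \sup_{a \in \Omega} \Psi^-(F,x,a,D_xf)$. Since $F$ is trivially a MLE of itself on $\Omega$, Proposition \ref{pro:the2.61} yields $\Psi^-(F,x,a,D_xf) \le f(x)$ for every $a \in \Omega$, with equality at $a = x$. Hence $u^-(x) = f(x) = w^-(x)$ and $D_xu^- = D_xf = D_xw^-$.

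For $x \in \R^n \setminus \Omega$, I would first obtain $u^-(x) \le w^-(x)$ from Corollary \ref{co:dlambda} applied to the MLE $W^-$: one has $D_xw^- \in \Lambda_x$ and
\[
w^-(x) \ge \sup_{a \in \Omega} \Psi^-(F,x,a,D_xw^-) \ge \inf_{v \in \Lambda_x} \sup_{a \in \Omega} \Psi^-(F,x,a,v) = u^-(x).
\]
For the reverse inequality, use Proposition \ref{promoins:xddn} to fix the unique $D_xu^- \in \Lambda_x$ realising the infimum, and define a 1-field $G$ on $\Omega \cup \{x\}$ by $G(y) = F(y)$ for $y \in \Omega$ and $G(x) = U^-(x)$. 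Since $D_xg = D_xu^- \in \Lambda_x$, the equivalence in Proposition \ref{pro:the2.61} gives $\sup_{a \in \Omega} \Psi^-(F,x,a,D_xg) \le \inf_{a \in \Omega} \Psi^+(F,x,a,D_xg)$; combined with $g(x) = u^-(x) = \sup_{a \in \Omega} \Psi^-(F,x,a,D_xg)$, the full bracketing of Proposition \ref{pro:the2.61} is satisfied, so $G$ is a MLE of $F$ on $\Omega \cup \{x\}$. By Theorem \ref{theo:moronggafa}, $G$ extends to a MLE $\tilde G$ of $F$ on $\R^n$, and the under extremality of $W^-$ forces $\tilde g(x) \ge w^-(x)$, i.e.\ $u^-(x) \ge w^-(x)$.

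Equality of the gradients off $\Omega$ then follows from the chain
\[
u^-(x) = w^-(x) \ge \sup_{a \in \Omega} \Psi^-(F,x,a,D_xw^-) \ge u^-(x) = \sup_{a \in \Omega} \Psi^-(F,x,a,D_xu^-),
\]
which shows that $D_xw^-$ also attains the infimum in the definition of $u^-(x)$; the uniqueness clause of Proposition \ref{promoins:xddn} then gives $D_xw^- = D_xu^-$. The argument is entirely symmetric to the over extremal case, and the principal point requiring care is purely bookkeeping: the strict concavity used in Proposition \ref{pro:xddn} is replaced by the strict convexity of $v \mapsto \Psi^-(F,x,a,v)$ (already encoded in Proposition \ref{promoins:xddn}), and the inequalities provided by Proposition \ref{pro:the2.61} must be invoked through both their $\Psi^+$ ceiling and $\Psi^-$ floor so that the candidate $G(x) = U^-(x)$ remains admissible as an MLE step.
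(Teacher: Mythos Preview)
Your proof is correct and follows exactly the approach the paper intends: the paper's own proof is the one-line remark that the argument is obtained from the proof of Theorem \ref{overUplus} by the symmetric substitutions, invoking Theorem \ref{theo:tqwells} and Proposition \ref{promoins:xddn}, and you have faithfully carried out that mirror argument in full detail.
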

\begin{proof}
 Using theorem \ref{theo:tqwells} and the Proposition (\ref{promoins:xddn}) the proof using similar arguments as in the proof of theorem \ref{overUplus}
\end{proof}

In conclusion we have the following corollary

\begin{corollary} For all minimal Lischitz extension $G$ of $F$ we have
$$
  u^-(x) \leq g(x) \leq u^+(x),  ~\forall x \in  \R^n.
$$
 
\end{corollary}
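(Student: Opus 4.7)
The statement is essentially a restatement of the content of the two extremality theorems that immediately precede it, so the plan is to invoke them directly and check that the definition of over/under extremal extension is exactly what is being asserted.

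First, I would recall Definition~\ref{def:quantrongnhat}: by convention, if $G_1$ is an \emph{over extremal} MLE of $F$, then for every MLE $K$ of $F$ one has $k(x)\le g_1(x)$ for all $x$; symmetrically, if $G_2$ is an \emph{under extremal} MLE of $F$, then $g_2(x)\le k(x)$ for all $x$ and every MLE $K$. Thus the corollary is nothing but a combination of these two inequalities applied to the specific extremal extensions produced earlier.

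Next, I would invoke Theorem~\ref{overUplus}, which asserts that $U^+$ is \emph{the} (unique) over extremal extension of $F$ on $\R^n$. By the definition above, this immediately yields $g(x)\le u^+(x)$ for every $x\in\R^n$ and every MLE $G$ of $F$. In the same way, the theorem stated just before the corollary asserts that $U^-$ is the unique under extremal extension of $F$, which by definition gives $u^-(x)\le g(x)$ for every $x\in\R^n$ and every MLE $G$.

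Putting the two bounds together yields $u^-(x)\le g(x)\le u^+(x)$ for all $x\in\R^n$, which is the claim. There is no genuine obstacle here: all of the technical work (existence of the MLEs $U^\pm$ via the Wells-type construction, the sup-inf representation, and the strict concavity/convexity argument that selects a unique gradient $D_xu^\pm$ through Propositions~\ref{pro:xddn} and~\ref{promoins:xddn}) has already been carried out in the proofs of the two extremality theorems. The only thing worth double-checking is that the proof of the under extremal counterpart really does go through by the stated symmetry, i.e.\ by replacing $\Psi^+$ by $\Psi^-$, $\sup$ by $\inf$ and concavity by convexity in the argument of Theorem~\ref{overUplus}; once that is accepted, the present corollary is a one-line consequence.
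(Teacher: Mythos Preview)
Your proposal is correct and matches the paper's approach exactly: the paper states the corollary with no proof, treating it as an immediate consequence of the two extremality theorems just proved together with Definition~\ref{def:quantrongnhat}. Your explicit unpacking of the definition and the invocation of Theorem~\ref{overUplus} and its $U^-$ analogue is precisely what the paper leaves implicit.
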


\section{Sup-Inf Explicit Formulas for Minimal Lipschitz Extensions for  function from  $\mathbb{R}^m$ to   $\mathbb{R}^n$  }\label{chap 4}

Now, we propose to use the previous results to produce formulas comparable to those Bauschke and Wang have found see \cite{Ba}.

Let us define first $\mathcal{Q}_0$  as the problem of the minimum extension for Lipschitzian functions second  $\mathcal{Q}_1$  as the problem of the minimum extension for $1$-fields. Curiously, we will show that the problem  $\mathcal{Q}_0$ is a sub-problem of the problem   $\mathcal{Q}_1$. As a consequence, we obtain two explicit formulas that solve the problem  $\mathcal{Q}_0$. \\
More specifically, fix $n,m \in  \N^*$ and $\omega \subset \R^m$. Let $u$ be a function  from  $\omega $ maps to  $\mathbb{R}^n$. Suppose ${\rm Lip}(u;\omega) < +\infty$ and define $l \triangleq {\rm Lip}(u;\omega)$.

Let us define $$\Omega \triangleq \{(x,0)\in \R^m \times \R^n : x \in \omega\}$$
 A current element  of $\R^{m+n}$ will denote by  $x:=(x^{(m)}, x^{(n)}) \in \R^{m+n}$, with $x^{(m)} \in \R^m$ and $x^{(n)} \in \R^n$.

Now we will define a $1$-field associated with $u$ denote by $F$ from $\Omega \subset \R^{n+m}$ maps to $\mathcal{P}^1(\R^{n+m},\R)$ as the following

\begin{equation}\label{KirsTheo}
f(x,0) \triangleq 0,~~{\rm and }~~D_{(x,0)}f \triangleq   (0,u(x)),~~{\rm for~all~} x \in \omega.
\end{equation}

Let $a,b \in \omega$, with $a \neq b$. Observing that
$$
f(a,0)=f(b,0)=0, ~{\rm and }~\langle D_{(a,0)}f+D_{(b,0)}f , (b-a,0)\rangle=0,
$$
 and applying Proposition \ref{pro:halAB} we have

$$
 \Gamma^1(F,(a,0),(b,0)) = \frac{\|D_{(a,0)}f-D_{(b,0)}f  \|}{\|b-a\|}.
$$
Therefore
\begin{equation}\label{Kir-1}
\Gamma^1(F,\Omega ) = \Lip(u,\omega).
\end{equation}

Let $G$ be an minimal Lipschitz extension of $F$. We have  $G   \in\cF^1(\R^{m+n}) $ and 
\begin{equation}\label{Kir-2}
  \Gamma^1(F,\Omega ) = \Gamma^1(G,\R^{m+n} ).
\end{equation}

Using \cite[Proposition 2.4]{ELG1} we have 
\begin{equation}\label{Kir-3}
\Gamma^1(G;\R^{m+n}) = {\rm Lip} (Dg;\R^{m+n}).
\end{equation}

Now we define the map $\tilde{u}$ from $\R^m$ to $\R^n$ as following
\begin{equation}\label{Kir-4}
 \tilde{u}(x) \triangleq  (D_{(x,0)}g)^{(n)},~ x \in \R^m.
\end{equation}
We will show that $\tilde{u}$ is a minimal Lipschitz extension of $ u $.

First, let $x \in \omega$. Since $G$ is an extension of $F$ and by construction of $F$ we have
$$
  \tilde{u}(x) = (D_{(x,0)}g)^{(n)} = u(x).
$$
Thus $ \tilde{u}$ is an extension of $u$.\\
Second, let $x,y \in \R^m$ with $x \neq y$. Using (\ref{Kir-1}), (\ref{Kir-2})  and (\ref{Kir-3}) we have
\begin{equation}\label{26}
 \left. \begin{array}{llll}
{\rm Lip}(\tilde{u};x,y)     &   = & \frac{  \|(D_{(x,0)}g)^{(n)}-(D_{(y,0)}g)^{(n)}\|}{\|x - y\|},\\
                                       & \leq &    \frac{ \| D_{(x,0)}g  -D_{(y,0)}g \| }{\|x - y\|}, \\
                                       & \leq   &     \Gamma^1(G;\R^{m+n}),  \\
                                     &   =      &   \Lip(u,\omega).
\end{array}\right. 
\end{equation}
Conclusion,  $\tilde{u}$ is an minimal Lipchitz extension of $u$. Therefore,  we obtain another proof of Kirsbraun's theorem (see \cite{Kirs}).

\begin{theorem}\label{TheoKirs} 
 Let $u$ be a function  from  $\omega \subset \R^m$  to  $\mathbb{R}^n$. Suppose $u$ Lipschitzian. Let $F$ be the $1$-field defined by the formula (\ref{KirsTheo}) . Let $G$ be  any minimal Lipschitz extension of $F$. Then the extension $\tilde{u}$ define by the formula (\ref{Kir-4})   is a minimal Lipschitz extension of $u$.
\end{theorem}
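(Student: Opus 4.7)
The plan is to verify the two defining properties of a minimal Lipschitz extension: $\tilde u$ agrees with $u$ on $\omega$, and $\Lip(\tilde u,\R^m)\le \Lip(u,\omega)$. The reverse inequality $\Lip(\tilde u,\R^m)\ge\Lip(u,\omega)$ is then automatic from the extension property.

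First, I would check that $\tilde u$ extends $u$. For $x\in\omega$, since $G$ is an extension of $F$ on $\R^{m+n}$, we have $D_{(x,0)}g=D_{(x,0)}f=(0,u(x))$ by construction of $F$; projecting on the last $n$ coordinates yields $\tilde u(x)=(D_{(x,0)}g)^{(n)}=u(x)$.

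Next, I would assemble the chain of identities already foreshadowed in the paragraphs preceding the theorem. For any $a',b'\in\omega$ with $a'\ne b'$, writing $a=(a',0)$, $b=(b',0)$, the definition of $F$ gives $f_a=f_b=0$ and $\langle D_af+D_bf,\,b-a\rangle=\langle(0,u(a'))+(0,u(b')),\,(b'-a',0)\rangle=0$. Hence by Proposition \ref{pro:halAB}, $A_{a,b}(F)=0$ and $\Gamma^1(F;a,b)=B_{a,b}(F)=\|u(a')-u(b')\|/\|a'-b'\|$, so $\Gamma^1(F,\Omega)=\Lip(u,\omega)$. Because $G$ is a MLE (whose existence is assured by Theorem \ref{theo:moronggafa}), $\Gamma^1(G,\R^{m+n})=\Gamma^1(F,\Omega)$, and by \cite[Proposition 2.4]{ELG1} applied on the whole space, $\Gamma^1(G,\R^{m+n})=\Lip(Dg,\R^{m+n})$.

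Finally, combining these identities with the $1$-Lipschitz continuity of the projection $w\mapsto w^{(n)}$ on $\R^{m+n}$, for all $x\ne y$ in $\R^m$ I would estimate
$$\|\tilde u(x)-\tilde u(y)\|=\|(D_{(x,0)}g-D_{(y,0)}g)^{(n)}\|\le \|D_{(x,0)}g-D_{(y,0)}g\|\le \Lip(Dg,\R^{m+n})\,\|x-y\|=\Lip(u,\omega)\,\|x-y\|,$$
which gives $\Lip(\tilde u,\R^m)\le\Lip(u,\omega)$ and closes the argument. There is no real obstacle: the crucial idea—embedding the Kirszbraun problem into a $1$-field extension problem via the formula (\ref{KirsTheo})—is already built into the statement; the only care required is to verify that $A_{a,b}(F)=0$ on the embedded pairs so that $\Gamma^1(F,\Omega)$ collapses exactly to $\Lip(u,\omega)$, a routine check done above.
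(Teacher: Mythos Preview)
Your proof is correct and follows essentially the same route as the paper: you verify $\tilde u|_\omega=u$, compute $A_{a,b}(F)=0$ to get $\Gamma^1(F,\Omega)=\Lip(u,\omega)$, then combine the MLE identity $\Gamma^1(G,\R^{m+n})=\Gamma^1(F,\Omega)$ with \cite[Proposition~2.4]{ELG1} and the $1$-Lipschitz projection to bound $\Lip(\tilde u,\R^m)$. This is exactly the argument the paper gives in the discussion preceding the theorem.
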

By replacing $G$ in Theorem \ref{TheoKirs}, by $U^-$ and $U^+$, we have two formulas which resolve the problem $\mathcal{Q}_0$.
Now describe these formulas which resolve the problem  $\mathcal{Q}_0$.



Let $a,b\in \omega$ and $x\in \cR^{m}$ we define
\bqq
v_{a,b}& \triangleq &\left(\frac{l}{2}(b-a),\frac{1}{2}(u(a)+u(b))\right),\\
\alpha_{a,b}& \triangleq &-\frac{1}{2}\|u(a)-u(b)\|^2+\frac{l^2}{2}\|a-b\|,\\
\beta_{a,b}(z)& \triangleq &\|lx-a-b\|^2+\|\frac{1}{2}(u(a)-u(b))\|^2,\\
r_{a,b}(x)& \triangleq &\sqrt{\alpha_{a,b}+\beta_{a,b}(x)},\\
\Lambda_x& \triangleq &\{v\in\cR^{m+n}: \|v-v_{a,b}\|\le r_{a,b}(x),\forall a,b\in \omega\}.
\eqq

 For  $v\in \R^{m+n}$, we define 
\bq  \label{Over-Kir1}
\Phi^+(u,x,a,v)& \triangleq &\frac{1}{2}\langle v^{(m)},x-a\rangle+\frac{l}{4}\|a-x\|^2-\frac{1}{4l}(\|v^{(m)}\|^2+\|u(a)-v^{(n)}\|^2),
\eq
\bq  \label{Under-Kir1}
\Phi^-(u,x,a,v)& \triangleq &\frac{1}{2}\langle v^{(m)},x-a\rangle-\frac{l}{4}\|a-x\|^2+\frac{1}{4l}(\|v^{(m)}\|^2+\|u(a)-v^{(n)}\|^2).
\eq
Now using the previous notation, we define two maps from $\cR^m$ to $\cR^n$ as following
\begin{equation} \label{Over-Kir2}
 k^+(x)  \triangleq   (\mbox{arg}\displaystyle\max_{v\in\Lambda_x}\inf\limits_{a\in\omega} \Phi^+(u,x,a,v))^{(n)},~ x\in \cR^m,
\end{equation}
and
\begin{equation} \label{Under-Kir2}
  k^-(x) \triangleq    (\mbox{arg}\displaystyle\min_{v\in\Lambda_x}\sup\limits_{a\in\omega} \Phi^-(u,x,a,v))^{(n)},~x\in \cR^m.
\end{equation}

\begin{theorem} 
The maps $k^+$ and $k^-$ define by the formulas   (\ref{Over-Kir2}) and  (\ref{Under-Kir2}) are minimal Lipschitz extensions of $u$.
\end{theorem}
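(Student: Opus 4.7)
The approach is to recognize that the two explicit formulas $k^+$ and $k^-$ are precisely what the extremal extensions $U^+$ and $U^-$ of Section \ref{chap 3} produce when applied to the auxiliary 1-field $F$ from (\ref{KirsTheo}), read off via the projection in Theorem \ref{TheoKirs}. Once this identification is made, the conclusion is immediate: $U^+$ and $U^-$ are MLEs of $F$ on $\R^{m+n}$ (by Theorem \ref{overUplus} and its $U^-$ analogue), so Theorem \ref{TheoKirs} applied with $G = U^\pm$ yields that $x \mapsto (D_{(x,0)}u^\pm)^{(n)}$ is a minimal Lipschitz extension of $u$.

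The concrete steps are: first, recall (as already established in the paragraph preceding Theorem \ref{TheoKirs}) that $\kappa := \Gamma^1(F,\Omega) = l$. Second, compute $\Psi^+(F,(x,0),(a,0),v)$ using $f_{(a,0)} = 0$, $D_{(a,0)}f = (0,u(a))$, $(x,0)-(a,0) = (x-a,0)$, and the decomposition $v = (v^{(m)},v^{(n)})$. The inner product $\langle (0,u(a))+v,(x-a,0)\rangle$ collapses to $\langle v^{(m)}, x-a\rangle$, and $\|(0,u(a))-v\|^2 = \|v^{(m)}\|^2 + \|u(a)-v^{(n)}\|^2$, reproducing exactly the right-hand side of (\ref{Over-Kir1}); the same computation for $\Psi^-$ gives (\ref{Under-Kir1}).

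Third, verify that the set $\Lambda_{(x,0)}$ of Definition \ref{Def.14} (applied to the field $F$ at the point $(x,0)$) coincides with the set $\Lambda_x \subset \R^{m+n}$ introduced in the statement of the theorem for $k^\pm$. This reduces to three direct substitutions showing $v_{(a,0),(b,0)} = v_{a,b}$, $\alpha_{(a,0),(b,0)} = \alpha_{a,b}$, and $\beta_{(a,0),(b,0)}((x,0)) = \beta_{a,b}(x)$, using $D_{(a,0)}f - D_{(b,0)}f = (0,u(a)-u(b))$ and the vanishing $\R^n$-component of $(a,0)$, $(b,0)$, $(x,0)$. Nonemptiness of this common set is guaranteed by Theorem \ref{overUplus}, so the $\max$ and $\min$ in Definitions \ref{uplus} and \ref{u_moins} are attained.

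Combining the two identifications, Definitions \ref{uplus} and \ref{u_moins} specialize at $(x,0)$ to
\[
D_{(x,0)}u^+ = \argmax_{v \in \Lambda_x} \inf_{a \in \omega} \Phi^+(u,x,a,v), \qquad D_{(x,0)}u^- = \argmin_{v \in \Lambda_x} \sup_{a \in \omega} \Phi^-(u,x,a,v),
\]
so taking the $(n)$-projection gives $k^\pm(x) = (D_{(x,0)}u^\pm)^{(n)}$. Applying Theorem \ref{TheoKirs} with $G = U^+$ and $G = U^-$ then yields that $k^+$ and $k^-$ are minimal Lipschitz extensions of $u$. The only real obstacle is the purely algebraic bookkeeping in matching $\Psi^\pm$ with $\Phi^\pm$ and $\Lambda_{(x,0)}$ with $\Lambda_x$; there is no new analytic content since all the work has been done in Sections \ref{chap 3} and the discussion leading up to Theorem \ref{TheoKirs}.
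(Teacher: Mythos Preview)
Your proposal is correct and follows exactly the approach the paper intends: the paper does not even write out a separate proof for this theorem, merely noting beforehand that ``by replacing $G$ in Theorem \ref{TheoKirs} by $U^-$ and $U^+$, we have two formulas which resolve the problem $\mathcal{Q}_0$'', and then recording what those formulas become after substituting the data of $F$ from (\ref{KirsTheo}). Your write-up simply fills in the algebraic verifications (that $\Psi^\pm$ specializes to $\Phi^\pm$ and $\Lambda_{(x,0)}$ to $\Lambda_x$) that the paper leaves implicit.
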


\begin{comment}
If $\omega$ is finite using the previous transformation $u \longrightarrow F$ the Wells explicite construction of $u^+$ or $u^-$ allows to compute $u^+$ and $u^-$.


We know that the proof of Kirszbraun-Valentine's theorem and the proof of [Theorem 2.6, \cite{ELG1}] use  Zorn's lemma.
Noticing that, like \cite{Ba}, the proof showing that  $ \Lambda_x$  is not empty and the proof of Wells' Theorem  \ref{theo:W+} do not use  Zorn's lemma. 



\end{comment}






\section{Absolutely Minimal Lipschitz Extensions}\label{chap 5}
\subsection{Finite domain}\label{sec:trfini}
Let $A=\{p_1,...,p_m\}$ be a finite subset of $\mathbb{R}^n$ ($n\ge 2$) and let $F\in\cF^1(A)$. Fix $\kappa=\Gamma^1(F;A)$. In this section, we  prove that the functions $W^+(F,A,\kappa)$ and $W^-(F,A,\kappa)$ defined in Appendix \ref{con:well+} and \ref{con:well-} are AMLEs of $F$ on $\R^n$.

\begin{theorem}\label{theo: wwamle} $W^+(F,A,\kappa)$ and $W^-(F,A,\kappa)$ are AMLEs of $F$ on $\R^n$.
\end{theorem}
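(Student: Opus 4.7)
The MLE half of the AMLE definition is already in hand: by Theorem~\ref{overUplus} and its under-extremal counterpart we have $W^+ = U^+$ and $W^- = U^-$, so $\Gamma^1(W^{\pm};\R^n) = \kappa$ and $W^{\pm}$ extend $F$. The task therefore reduces to verifying, for every bounded open $V$ with $\overline V \subset \R^n\backslash A$, the local equality $\Gamma^1(W^{\pm};V) = \Gamma^1(W^{\pm};\partial V)$. The easy inequality $\Gamma^1(W^{\pm};V) \ge \Gamma^1(W^{\pm};\partial V)$ is immediate from $\partial V \subset \overline V$ together with Proposition~2.10 of \cite{ELG1}, which gives $\Gamma^1(W^{\pm};V) = \Gamma^1(W^{\pm};\overline V)$.

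For the reverse inequality I would argue by contradiction. Set $\mu \triangleq \Gamma^1(W^{+};\partial V) \le \kappa$ and suppose $\Gamma^1(W^{+};V) > \mu$. Using Theorem~\ref{theo:moronggafa} choose $H \in \cF^1(\overline V)$ to be the over-extremal MLE of $W^{+}|_{\partial V}$ on $\overline V$; in particular $H = W^{+}$ on $\partial V$ and $\Gamma^1(H;\overline V) = \mu$. Define the glued field
\begin{equation*}
\widetilde G(x) \triangleq \begin{cases} H(x), & x \in \overline V,\\ W^{+}(x), & x \in \R^n\backslash V.\end{cases}
\end{equation*}
The boundary match ensures well-definedness. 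A three-case comparison (both points in $\overline V$, both in $\R^n\backslash V$, or one in each, the mixed case being handled by selecting an intermediate $z \in ([x,c]\cup[y,c]) \cap \partial V$ via Proposition~2 of \cite{ELG2}, exactly as in the final theorem of Section~\ref{chap 2}) bounds $\Gamma^1(\widetilde G;x,y)$ by $\max(\mu,\kappa) = \kappa$ for every $x \neq y$. Hence $\widetilde G \in \cF^1(\R^n)$ extends $F$ with $\Gamma^1(\widetilde G;\R^n) \le \kappa$; since any extension of $F$ satisfies $\Gamma^1 \ge \kappa$, $\widetilde G$ is in fact an MLE of $F$ on $\R^n$.

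It remains to force $H = W^{+}|_{\overline V}$, which will give $\Gamma^1(W^{+};\overline V) = \mu$ and the desired contradiction. Here I would invoke the explicit iterative structure of Wells' construction in Appendix~\ref{con:well+}: since $A$ is finite, the value $w^{+}(x)$ at each $x \in V$ is realized by a tangent paraboloid anchored at a specific active point $a^{\ast}(x) \in A$, and by transporting the construction along the segment $[x,a^{\ast}(x)]$ up to its first crossing point with $\partial V$, one identifies $w^{+}(x)$ with the over-extremal value built directly from the boundary trace $W^{+}|_{\partial V}$, which is exactly $H(x)$. Coupled with the inequality $\widetilde g \le w^{+}$ coming from the over-extremality of $W^{+}$ as MLE of $F$ (Theorem~\ref{overUplus}), this forces $H = W^{+}|_{\overline V}$. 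The dual argument, using under-extremal MLEs and downward paraboloids, disposes of $W^{-}$. The principal obstacle is precisely this last identification: the locality assertion that the globally defined $W^{+}$ coincides on an arbitrary subdomain $\overline V$ disjoint from $A$ with the over-extremal MLE of its own boundary trace cannot be deduced from the abstract extremality results of Section~\ref{chap 3} alone -- there are many MLEs of $F$ sandwiched strictly between $W^{-}$ and $W^{+}$ -- and genuinely requires the explicit paraboloid/ridge structure available only when the domain $A$ is finite.
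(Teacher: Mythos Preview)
Your approach has a genuine gap at exactly the place you flag as ``the principal obstacle.'' The identification $H = W^+|_{\overline V}$ is not established by the sketch you give. The over-extremal MLE $H$ of $W^+|_{\partial V}$ is built with Lipschitz constant $\mu$ from data on the \emph{infinite} set $\partial V$, whereas Wells' $w^+$ is built with constant $\kappa$ from data on $A$; transporting along $[x,a^\ast(x)]$ to a boundary crossing point gives no mechanism to match the two constructions, because the paraboloids involved have different curvatures ($\kappa$ versus $\mu$) and different anchor sets. The over-extremality of $W^+$ among MLEs of $F$ yields only $h \le w^+$ on $\overline V$; the reverse inequality $w^+ \le h$ would require knowing that $W^+|_{\overline V}$ lies below \emph{every} MLE of its own boundary trace, which is a statement about a different extension problem (data on $\partial V$, constant $\mu$) and does not follow from anything proved in the paper.

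The paper's proof sidesteps this entirely by showing directly that both sides equal $\kappa$. Since $\Gamma^1(W^+;V) \le \Gamma^1(W^+;\R^n) = \kappa$ and $\Gamma^1(W^+;\partial V) \ge \Lip(\nabla w^+;\partial V)$, it suffices to find $x_0 \neq y_0$ in $\partial V$ with $\|\nabla w^+(x_0) - \nabla w^+(y_0)\| = \kappa\|x_0-y_0\|$. Here the finiteness of $A$ enters in a single clean stroke: the Wells decomposition $\R^n = \bigcup_{S\in K^+} T_S^+$ has only finitely many pieces (as $K^+ \subset 2^A$), whereas $\partial V$ is infinite (this is where the standing hypothesis $n\ge 2$ is used), so by pigeonhole some $T_S^+$ contains two distinct boundary points $x_0,y_0$. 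Wells' Lemma~21 then gives $\|\nabla w^+(x_0)-\nabla w^+(y_0)\| = \kappa\|x_0-y_0\|$ for any two points in the same piece, and the proof is complete. No gluing, no contradiction, no locality lemma is needed.
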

\begin{proof}
For brevity let us denote $W^+(F,A,\kappa)$ by $W^+$. We prove that $W^+$ is an AMLE of $F$ on $\R^n$.  \\
Since Corollary \ref{cor:well+}, we have $W^+$ to be an MLE of $F$ on $\R^n$. \\
Let $V$ be a bounded open satisfying $\bar{V} \subset \R^n\setminus A$. We need to prove that $\Gamma^1(W^+;V)=\Gamma^1(W^+;\partial V)$. Indeed, the inequality $\Gamma^1(W^+;V)\ge\Gamma^1(W^+;\partial V)$ is clearly true, so that we only need to prove that $\Gamma^1(W^+;\partial V)\ge\Gamma^1(W^+;V)$. 
Because $\kappa=\Gamma^1(W^+;\cR^n)\ge \Gamma^1(W^+;V)$ and $\Gamma^1(W^+;\partial V)\ge \Lip(\nabla w^+;\partial V)$ (from Proposition \ref{pro:halAB}), it is enough to show that $\Lip(\nabla w^+,\partial V)\ge \kappa$.\\
Applying Proposition \ref{pr:cw+2} we have $\bigcup\limits_{S \in K^+} {{T^+_S}}  = {\mathbb{R}^n}$ ($K^+$ and $T^+_S$ are defined in Appendix \ref{con:well+}). Since $A$ has finite elements, we have $K^+$ has finite elements. On the other hand, $\partial V$ has infinite elements. Therefore there exist $S\in K^+$ and $x_0,y_0\in \partial V$ such that $x_0,y_0\in T^+_S$. Applying \cite[Lemma 21]{Well1} we have
\bqq
\|\nabla w^+(x_0)-\nabla w^+(y_0)\|=\kappa\|x_0-y_0\|.
\eqq
Thus
\bqq
\Lip(\nabla w^+,\partial V)\ge \frac{\|\nabla w^+(x_0)-\nabla w^+(y_0)\|}{\|x_0-y_0\|}=\kappa.
\eqq
Therefore  $W^+$ is an AMLE of $F$ on $\R^n$.\\
The proof for $W^-$ is similar.
\end{proof}

\begin{corollary}\label{Coro.infinity} There exist infinity solutions AMLE of $F$ on $\cR^n$.
\end{corollary}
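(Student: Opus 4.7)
The plan is to exploit non-uniqueness at a single point: if $u^+(x_0) > u^-(x_0)$ for some $x_0 \in \R^n \setminus A$ (otherwise $W^+ = W^-$ is the unique AMLE and the construction is vacuous), then a one-parameter family of valid extensions at $x_0$ produces pairwise distinct AMLEs of $F$, each obtained by applying Theorem \ref{theo: wwamle} to the still-finite augmented domain $A \cup \{x_0\}$.

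Concretely, I fix such an $x_0$ and for $t \in [0,1]$ I set $\lambda_t = (1-t) u^-(x_0) + t u^+(x_0)$ and $v_t = (1-t) D_{x_0} u^- + t D_{x_0} u^+$. Since $\Lambda_{x_0}$ is convex (Definition \ref{Def.14}) and contains $D_{x_0} u^\pm$ (Corollary \ref{co:dlambda}), I get $v_t \in \Lambda_{x_0}$. I then define $F_t$ on $A \cup \{x_0\}$ by $f_t(x_0) = \lambda_t$ and $D_{x_0} f_t = v_t$, and I show $F_t$ is a MLE of $F$ via Proposition \ref{pro:the2.61}, which reduces to the sandwich $\phi^-(v_t) \le \lambda_t \le \phi^+(v_t)$, where $\phi^+(v) = \inf_a \Psi^+(F, x_0, a, v)$ and $\phi^-(v) = \sup_a \Psi^-(F, x_0, a, v)$. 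The map $v \mapsto \Psi^+(F, x_0, a, v)$ is concave in $v$ (it is affine in $v$ minus $\|D_a f - v\|^2/(4\kappa)$), so $\phi^+$ is concave on $\Lambda_{x_0}$; symmetrically $\phi^-$ is convex. Corollary \ref{co:dlambda} combined with Definitions \ref{uplus} and \ref{u_moins} yields $\phi^+(D_{x_0} u^\pm) \ge u^\pm(x_0)$ and $\phi^-(D_{x_0} u^\pm) \le u^\pm(x_0)$, so concavity/convexity along the segment $\{v_t\}$ gives $\phi^-(v_t) \le \lambda_t \le \phi^+(v_t)$.

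Since $A \cup \{x_0\}$ is still finite, Theorem \ref{theo: wwamle} supplies $G_t := W^+(F_t, A \cup \{x_0\}, \kappa)$, an AMLE of $F_t$ on $\R^n$. I verify that $G_t$ is also an AMLE of $F$: the MLE property is immediate from $\Gamma^1(G_t; \R^n) = \kappa = \Gamma^1(F; A)$, and for any bounded open $V$ with $\overline V \subset \R^n \setminus A$ the argument of Theorem \ref{theo: wwamle} applies verbatim, since the index set $K^+$ attached to the finite set $A \cup \{x_0\}$ remains finite while $\partial V$ is infinite, forcing two boundary points into a common cell $T^+_S$ and hence $\Lip(\nabla g_t; \partial V) = \kappa$, so that $\Gamma^1(G_t; \partial V) = \Gamma^1(G_t; V)$. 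As $G_t(x_0) = \lambda_t$ is strictly monotone in $t$, the family $\{G_t\}_{t \in [0,1]}$ consists of pairwise distinct AMLEs of $F$, giving uncountably many. The main obstacle is the concavity/convexity sandwich for $\phi^\pm$ in Step 2; the rest is essentially a reapplication of Theorem \ref{theo: wwamle} to the enlarged finite field.
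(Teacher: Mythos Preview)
Your proof is correct and follows essentially the same strategy as the paper: pick $x_0\in\R^n\setminus A$ with $w^+(x_0)\ne w^-(x_0)$, build a one-parameter family of MLEs of $F$ on the finite set $A\cup\{x_0\}$, and then feed each into the Wells construction, invoking the finite-cell argument of Theorem~\ref{theo: wwamle} (which only uses $|K^+|<\infty$ and $|\partial V|=\infty$, so it applies to any bounded open $V$ with $\overline V\subset\R^n\setminus A$, not merely $\R^n\setminus(A\cup\{x_0\})$). The only difference is in how the MLE property of the augmented field $F_t$ is checked: you go through the concavity/convexity of $\phi^\pm$ and Proposition~\ref{pro:the2.61}, whereas the paper simply observes that $W_\tau=\tau W^++(1-\tau)W^-$ is an MLE of $F$ on all of $\R^n$ (by subadditivity of $\Gamma^1$ under convex combinations) and restricts it to $A\cup\{x_0\}$ --- a shorter route to the same intermediate family.
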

\begin{proof}
From the definition of $W^+(F,A,\kappa)$ and $W^-(F,A,\kappa)$, we have $W^+(F,A,\kappa)\ne W^-(F,A,\kappa)$. 

Let $x_0\in\cR^n\backslash A$ such that $W^+(F,A,\kappa)(x_0)\ne W^-(F,A,\kappa)(x_0)$. Noting that  $$W_\tau=\tau W^+(F,A,\kappa)+(1-\tau) W^-(F,A,\kappa)$$ is MLE of $F$ on $\cR^n$, for any $\tau\in[0,1]$. 

Thus there exist infinity solution MLE of $F$ on $A\cup\{x_0\}$.

Let G be a MLE of $F$ on $A\cup\{x_0\}$. By the same argument as the proof of Theorem \ref{theo: wwamle}, we have $W^+(G,A\cup\{x_0\},\kappa)$ is the AMLE of $F$ on $\cR^n$. 

Therefore, we have infinity solutions AMLE of $F$ on $\cR^n$.
\end{proof}

\subsection{Infinite domain }\label{sec:wellctq}
Let $\Omega$ be a nonempty subset of $\cR^n$ and let $F\in\cF^1(\Omega)$. Fix $\kappa=\Gamma^1(F;\Omega)$. From Section \ref{sec:trfini}, we know that if $\Omega$ is a finite set then the functions $W^+(F,\Omega,\kappa)$ and $W^-(F,\Omega,\kappa)$ defined in Appendix \ref{con:well+} and \ref{con:well-} are AMLEs of $F$ on $\R^n$ ($n\ge 2$). So that, we hope that in general case when $\Omega$ is an infinite set, the functions $W^+(F,\Omega)$ and $W^-(F,\Omega)$ defined in Appendix \ref{sec:casedoinf} are also AMLEs of $F$ on $\R^n$. Unfortunately, this is not true. We give an example that $\Omega$ is an infinite set and neither $W^+(F,\Omega)$ nor $W^-(F,\Omega)$ is AMLE of $F$ on $\R^n$.
\begin{proposition} \label{pro:mdpvdqt} We consider in $\cR^2$. Let $\Omega_1=\partial B(0;1)$ and $\Omega_2=\partial B(0;2)$. Let $\Omega=\Omega_1\cup\Omega_2$. Let $F\in\cF^1(\Omega)$ such that $f_x=0$ for all $x$ in $\Omega_1$, $f_x=1$ for all $x$ in $\Omega_2$, and $D_xf=0$ for all $x$ in $\Omega$.
Then neither $W^+(F,\Omega)$ nor $W^-(F,\Omega)$ is AMLE of $F$ on $\R^2$.
\end{proposition}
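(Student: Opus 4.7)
The plan is to exploit the $O(2)$-symmetry of the data in order to reduce $W^{\pm}$ to scalar radial profiles, and then to exhibit a ball $V\subset B(0;1)$ on which the absolutely-minimal identity fails.

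\textbf{Step 1 (rotational symmetry).} Both $\Omega$ and $F$ are invariant under every $R\in O(2)$, since $f_x$ depends only on $\|x\|$ and $D_xf\equiv 0$. By the uniqueness of the over-extremal (resp.\ under-extremal) extension in Theorem~\ref{overUplus} and its analog, $W^{\pm}(F,\Omega)\circ R$ is again the extremal extension of $F\circ R=F$ and so equals $W^{\pm}(F,\Omega)$. Therefore $w^{\pm}(x)=\varphi^{\pm}(\|x\|)$ for profiles $\varphi^{\pm}$; since $w^{\pm}\in\mathcal{C}^{1,1}(\R^2,\R)$ and $\nabla w^{\pm}(0)=0$ by symmetry, the maps $\varphi^{\pm}$ are Lipschitz-differentiable on $[0,\infty)$ with $\varphi^{\pm}(1)=0$, $\varphi^{\pm}(2)=1$ and $(\varphi^{\pm})'(0)=(\varphi^{\pm})'(1)=(\varphi^{\pm})'(2)=0$.

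\textbf{Step 2 ($\varphi^{\pm}$ are not constant on $[0,1]$).} A short calculation with Proposition~\ref{pro:halAB} gives $\kappa=\Gamma^1(F;\Omega)=4$, the supremum being attained in the limit by pairs $(a,2a)$ with $a\in\Omega_1$. Checking the three cases $(a,b)\in\Omega_i\times\Omega_j$ for $i,j\in\{1,2\}$, one verifies that $v=0$ belongs to $\Lambda_0$ (this reduces to the one-line inequality $\|v_{a,b}\|^{2}\le\alpha_{a,b}+\beta_{a,b}(0)$). Plugging $v=0$ into Definition~\ref{uplus} and its minus analog,
$$u^+(0)\ \ge\ \inf_{a\in\Omega}\Psi^+(F,0,a,0)=\inf_{a\in\Omega}\bigl(f_a+\tfrac{\kappa}{4}\|a\|^2\bigr)=\min(1,5)=1,$$
and symmetrically $u^-(0)\le -1$. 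So $\varphi^+(0)\ge 1>0=\varphi^+(1)$ and $\varphi^-(0)\le -1<0=\varphi^-(1)$.

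\textbf{Step 3 (failure of AMLE).} Fix $\rho\in(0,1)$ to be chosen and set $V=B(0;\rho)$, so $\overline V\subset B(0;1)\subset\R^2\setminus\Omega$. For any pair $x=s\hat u$, $y=s\hat v$ on a sphere of radius $s\le\rho$, the identity $\langle\hat u+\hat v,\,y-x\rangle=0$ yields $A_{x,y}(W^+)=0$, while $B_{x,y}(W^+)=|\varphi^{+\prime}(s)|/s$. Proposition~\ref{pro:halAB} therefore gives $\Gamma^1(W^+;x,y)=|\varphi^{+\prime}(s)|/s$, so
$$\Gamma^1(W^+;\partial V)=\frac{|\varphi^{+\prime}(\rho)|}{\rho},\qquad \Gamma^1(W^+;V)\ \ge\ \sup_{0<s<\rho}\frac{|\varphi^{+\prime}(s)|}{s}.$$
The function $g(s):=|\varphi^{+\prime}(s)|/s$ is continuous on $(0,1]$ and vanishes at $s=1$ by the boundary condition $(\varphi^+)'(1)=0$; by Step~2 it cannot vanish identically on $(0,1)$, otherwise $\varphi^+$ would be constant there and we would have $\varphi^+(0)=\varphi^+(1)=0$, contradicting $\varphi^+(0)\ge 1$. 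Hence there is $s_0\in(0,1)$ with $g(s_0)>0$, and by continuity and $g(1)=0$ some $\rho\in(s_0,1)$ with $g(\rho)<g(s_0)$. For this $\rho$, $\Gamma^1(W^+;V)\ge g(s_0)>g(\rho)=\Gamma^1(W^+;\partial V)$, so $W^+$ is not AMLE. The same argument with $\varphi^-$ in place of $\varphi^+$ disposes of $W^-$.

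\textbf{Main obstacle.} The only slightly technical point is the membership $0\in\Lambda_0$, which requires checking the three case-by-case quadratic inequalities of Step~2. Everything else is a consequence of rotational symmetry together with the boundary condition $D_xf=0$ on $\Omega_1$, which forces the radial ratio $|\varphi^{+\prime}(s)|/s$ to vanish at $s=1$ and hence to be strictly larger somewhere in the interior, yielding the failure of AMLE on a suitable inner ball.
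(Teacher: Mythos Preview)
Your argument is correct and takes a genuinely different route from the paper. The paper computes $w^{+}$ explicitly on the annulus $\{1/2\le\|x\|\le 3/2\}$ via Wells's finite-set construction, obtaining $w^{+}(x)=\tfrac{\kappa}{2}(\|x\|-1)^{2}$, and then evaluates $\Gamma^{1}(W^{+};V)=4$ and $\Gamma^{1}(W^{+};\partial V)=4/3$ for the specific choice $V=B(0;3/4)$. You instead bypass the Wells machinery entirely: rotational symmetry plus the uniqueness in Theorem~\ref{overUplus} gives a radial profile $\varphi^{+}$; the elementary check $0\in\Lambda_{0}$ forces $\varphi^{+}(0)\ge 1>0=\varphi^{+}(1)$, so $\varphi^{+\prime}$ cannot vanish identically on $(0,1)$; and since the boundary condition $D_xf=0$ on $\Omega_1$ gives $\varphi^{+\prime}(1)=0$, an intermediate-value argument produces a radius $\rho$ where the spherical ratio $g(\rho)=|\varphi^{+\prime}(\rho)|/\rho$ is strictly smaller than some $g(s_{0})$ with $s_{0}<\rho$. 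The paper's approach yields more information (the exact form of $w^{\pm}$ on several annuli, as recorded in the Remark following the proposition), while yours is shorter, avoids any appeal to the combinatorics of Wells's $T^{+}_{S}$ cells, and would adapt to other radially symmetric data with $Df=0$ on an inner sphere. One minor remark: the supremum $\kappa=4$ is actually attained at pairs $(a,2a)$ with $a\in\Omega_{1}$ (since $\|a-2a\|=1$), not merely in the limit.
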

\begin{proof}
For brevity let us denote $W^+(F,\Omega)$ by $W^+$. We will prove that $W^+$ is not an AMLE of $F$ on $\R^n$. To do this, we need to find an open set $V\subset\subset\cR^2/\Omega$ such that $\Gamma^1(W^+;V)\ne \Gamma^1(W^+;\partial V)$.\\
Let $V=\{x\in\R^2:\|x\|< 3/4\}\subset\subset\cR^2/\Omega$, we will prove $\Gamma^1(W^+;V)\ne \Gamma^1(W^+;\partial V)$. 
\begin{figure}[ht]
\begin{center}
\includegraphics[scale=0.4]{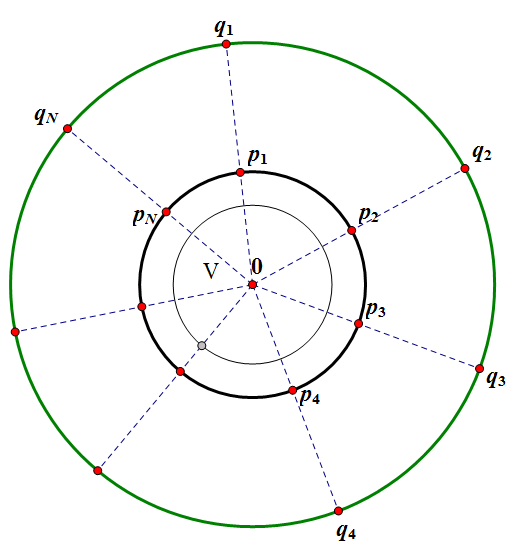}\\
\end{center}
\caption{}
\end{figure}\\
Applying Proposition \ref{pro:halAB} we have $\kappa=\Gamma^1(F;\Omega)=4$. \\
Let $\mathcal{A}$ be the set of all finite sets $A_N$, where $A_N=\{p_1,...,p_N\}\cup\{q_1,...,q_N\}$ satisfies $\{p_1,p_2,...p_N\}\subset\Omega_1$, $\{q_1,q_2,...,q_N\}\subset\Omega_2$ and $p_i\in [0,q_i]$ for all $i\in {1,...,N}$.\\
Let $A_N\in \mathcal{A}$. For brevity let us denote the functions $W^+(F,A_N,\kappa)$ defined in Appendix \ref{con:well+} by $W_{A_N}^+$.
\\ Applying Proposition \ref{pro:halAB} we have $\kappa_N=\Gamma^1(W_{A_N}^+;A_N)=4=\kappa$.\\ We have $[\frac{0+p_i}{2},\frac{p_i+q_i}{2}]\subset T^+_{p_i}$, for all $i\in \{1,...,N\}$, where $T^+_{p_i}$ is defined in Appendix \ref{con:well+} with the corresponding definition for the finite set $A=A_N$. From the definition of $w^+_{A_N}$, we have: $$w_{A_N}^+(x)=\frac{\kappa}{2}(x-p_i)^2=\frac{\kappa}{2}d^2(x;\partial \Omega_1),$$ for all $x\in[\frac{p_i}{2},\frac{p_i+q_i}{2}]$ and $i\in \{1,...,N\}$.\\
We will prove $w^+(x)=\frac{\kappa}{2}d^2(x;\partial \Omega_1)$ for all $x\in\{1/2\le \|x\|\le 3/2\}$. 
Indeed, for any $x_0\in\{x:1/2\le \|x\|\le 3/2\}$, there exist $A_N\in \mathcal{A}$ such that $x_0\in[0,q_1]$. From the definition of $w^+$, we have $w^+(x_0)\le w_{A_N}^+(x_0)=\frac{\kappa}{2}d^2(x_0;\partial \Omega_1)$. Conversely, we call $\mathcal{P}$ to be the set of all finite subsets of $\Omega$. Then for any $P\in \mathcal{P}$, there exist $A_N\in \mathcal{A}$ such that $P\subset A_N$ and $x_0\in [0,q_1]$. Applying Theorem \ref{theo:W+}, we have $w^+_P(x_0)\ge w^+_{A_N}(x_0)=\frac{\kappa}{2}d^2(x_0;\partial \Omega_1)$. Hence $w^+(x_0)=\inf\limits_{P\in\mathcal{P}}w^+_P(x_0)\ge \frac{\kappa}{2}d^2(x_0,\partial \Omega_1)$. Therefore $w^+(x)=\frac{\kappa}{2}d^2(x;\partial \Omega_1)$ for all $x\in\{1/2\le \|x\|\le 3/2\}$.\\
Thus $$w^+(x)=\frac{\kappa}{2}d^2(x;\partial \Omega_1)=\frac{\kappa}{2}(\|x\|-1)^2,$$ for all $x\in V\cap\{\frac{1}{2}< \|x\|< \frac{3}{2}\}=\{\frac{1}{2}< \|x\|< \frac{3}{4}\}$. \\
Let $x_0,y_0\in \{\frac{1}{2}< \|x\|< \frac{3}{4}\}$ such that $x_0\in [0,y_0]$, we have 
\bqq
\|\nabla w^+(x_0)-\nabla w^+(y_0)\|=\left\|\kappa(\|x_0\|-1)\frac{x_0}{\|x_0\|}-\kappa(\|y_0\|-1)\frac{y_0}{\|y_0\|}\right\|=\kappa\|x_0-y_0\|.
\eqq
Hence $\kappa\ge \Gamma^1(W^+;\cR^n)\ge \Gamma^1(W^+;V)\ge \Lip(\nabla w^+;V)\ge \kappa$, and hence
\bq\label{pt:uvd}
\Gamma^1(W^+;V)=\kappa=4.
\eq
On the other hand, for all $x,y\in \partial V$ we have $\|x\|=\|y\|=3/4$, so that 
\bqq
\|\nabla w^+(x)-\nabla w^+(y)\|=\left\|\kappa(\|x\|-1)\frac{x}{\|x\|}-\kappa(\|y\|-1)\frac{y}{\|y\|}\right\|=\frac{\kappa}{3}\|x-y\|.
\eqq
Hence $$B_{x,y}(W^+)= \frac{\left\| {\nabla w^+(x)-\nabla w^+(y)} \right\|}{\left\| {x-y} \right\|}=\frac{\kappa}{3}.$$\\
Moreover, since $w^+(x)=w^+(y)$ and ($\nabla w^+(x)+\nabla w^+(y)$) is perpendicular to $(y-x)$, we have
\bqq
A_{x,y}(W^+) =\frac{2(w^+(x)-w^+(y))+\left\langle{\nabla w^+(x)+\nabla w^+(y),y-x}\right\rangle}{\left\| {x-y} \right\|^2}=0.
\eqq
Applying Proposition \ref{pro:halAB}, we have 
\bq\label{pt:uvd1}
\Gamma^1(W^+;\partial V)=\sup\limits_{x,y\in\partial V}\left(\sqrt{A_{a,b}^2+B_{a,b}^2}+\left| A_{a,b}\right|\right)=\frac{\kappa}{3}=\frac{4}{3}.
\eq
From (\ref{pt:uvd}) and (\ref{pt:uvd1}) we have $\Gamma^1(W^+;V)\ne\Gamma^1(W^+;\partial V)$. And therefore $W^+$ is not an AMLE of $F$ on $\R^n$.\\
The proof for $W^-$ is similar.
\end{proof}
\begin{remark}
With the same notation as in Proposition  \ref{pro:mdpvdqt}. For brevity let us denote $w^+=w^+(F,\Omega)$ and $w^-=w^-(F,\Omega)$, by computing directly from the definition, we have 
\bqq
w^+(x)&=&-w^-(x)=1-\frac{\kappa}{2}x^2,\text{  }\forall x\in\{x\in\cR^2:0\le\|x\|\le\frac{1}{2}\},\\ 
w^+(x)&=&-w^-(x)=\frac{\kappa}{2}d^2(x,\partial \Omega_1),\text{  }\forall x\in\{x\in\cR^2:\frac{1}{2}\le\|x\|\le 1\},\\ 
w^+(x)&=&w^-(x)=\frac{\kappa}{2}d^2(x,\partial \Omega_1),\text{  }\forall x\in\{x\in\cR^2:1\le\|x\|\le \frac{3}{2}\},\\ 
w^+(x)&=&w^-(x)=1-\frac{\kappa}{2}d^2(x,\partial \Omega_2),\text{  }\forall x\in\{x\in\cR^2:\frac{3}{2}\le\|x\|\le 2\},\\ 
w^+(x)&=&1+\frac{\kappa}{2}d^2(x,\partial \Omega_2) ,\text{  }\forall x\in\{x\in\cR^2:2\le\|x\|\},\\ 
w^-(x)&=& 1-\frac{\kappa}{2}d^2(x,\partial \Omega_2),\text{  }\forall x\in\{x\in\cR^2:2\le\|x\|\},
\eqq
where $\kappa=\Gamma^1(F;\Omega)=4$.
\\
We see that $w=\frac{w^+ + w^-}{2}$ is an AMLE of $F$ on $\cR^2$ (although $w^+$ and $w^-$ are not AMLEs of $F$ on $\R^2$) and $w\notin \mathcal{C}^2(\cR^2,\R)$. Moreover, all MLEs of $F$ coincide on $\{x\in\cR^2:1\le\|x\|\le 2\}$ (because $w^+=w^-$ on $\{x\in\cR^2:1\le\|x\|\le 2\}$). 
\end{remark}

From Proposition \ref{pro:mdpvdqt}, we know that $W^+(F,\Omega)$ is not an AMLE of $F$ on $\R^n$ in general case. But in some case, we have $W^+(F,\Omega)$ to be an AMLE of $F$ on $\R^n$. We give an example:

\begin{proposition}Let $\Omega$ be a nonempty subset of $\cR^n$ ($n\ge 2$). Let $F\in\cF^1(\Omega)$ such that $\widetilde \Omega=\{p-\frac{D_pf}{\kappa}:p\in \Omega\}$ is a subset of an $(n-1)$-dimensional hyperplane $H$, where $\kappa=\Gamma^1(F,\Omega)$. Then $W^+(F,\Omega)$ is an AMLE of $F$ on $\R^n$.
\end{proposition}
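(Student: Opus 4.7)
The plan exploits the geometric meaning of $\widetilde\Omega$. Each point $\widetilde p \triangleq p - D_pf/\kappa \in \widetilde\Omega$ is the apex of the Wells paraboloid attached to $p$, so the hypothesis places every such apex on a common hyperplane $H$. Fix a unit normal $\nu$ to $H$ and decompose $x = \pi_H(x) + t(x)\,\nu$ with $t(x) \triangleq \langle x-\pi_H(x),\nu\rangle$. A direct computation gives, for every $p\in\Omega$,
\[
 f_p + \langle D_pf,\, x-p\rangle + \tfrac{\kappa}{2}\|x-p\|^2 \;=\; c_p + \tfrac{\kappa}{2}\|x-\widetilde p\|^2
\]
with $c_p \triangleq f_p - \|D_pf\|^2/(2\kappa)$, and $\widetilde p\in H$ together with Pythagoras yield
\[
 c_p + \tfrac{\kappa}{2}\|x-\widetilde p\|^2 \;=\; \tfrac{\kappa}{2}\,d(x,H)^2 \;+\; \Bigl[c_p + \tfrac{\kappa}{2}\|\pi_H(x)-\widetilde p\|^2\Bigr],
\]
so the normal-direction contribution is the \emph{same} $\tfrac{\kappa}{2}d(x,H)^2$ for every $p$.

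The decisive step, which I expect to be the main technical obstacle, is to lift this pointwise factorization to the full Wells extension, namely to establish
\[
 w^+(x) \;=\; \phi\bigl(\pi_H(x)\bigr) \;+\; \tfrac{\kappa}{2}\,d(x,H)^2, \qquad x\in\R^n, \qquad (\star)
\]
for some $\phi : H \to \R$. Two routes suggest themselves. (a) \emph{Dimensional reduction}: prescribe $\phi$ on $\pi_H(\Omega)$ by $\phi(\pi_H p) \triangleq f_p - \|(D_pf)^{\perp}\|^2/(2\kappa)$ with tangential gradient $\nabla_H\phi(\pi_H p)\triangleq(D_pf)^{\|}$, verify that the induced 1-field $\Phi$ on $\pi_H(\Omega)\subset H$ satisfies $\Gamma^1(\Phi;\pi_H(\Omega))\le \kappa$, extend $\phi$ by the over-extremal MLE of $\Phi$ on the $(n-1)$-dimensional space $H$, and show that the candidate defined by $(\star)$ coincides with the over-extremal MLE of $F$ by uniqueness (Theorem~\ref{overUplus}). (b) \emph{Via the tile construction}: use the definition of $W^+(F,\Omega)$ from Appendix~\ref{sec:casedoinf} as the infimum $\inf_{P\subset\Omega\text{ finite}} w^+(F,P,\kappa)$, and verify $(\star)$ for each finite $P$ by inspecting Wells' tile decomposition in Appendix~\ref{con:well+}; the infimum then preserves the form, exactly as in the proof of Proposition~\ref{pro:mdpvdqt}. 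Once $(\star)$ is established, everything below is formal.

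Granted $(\star)$, since $W^+ \in \cF^1(\R^n)$ we have $w^+\in\mathcal{C}^{1,1}(\R^n,\R)$ and hence $\phi\in\mathcal{C}^{1,1}(H,\R)$, so
\[
 \nabla w^+(x) \;=\; \nabla_H\phi\bigl(\pi_H(x)\bigr) \;+\; \kappa\bigl(x - \pi_H(x)\bigr).
\]
In particular, whenever $x_1,x_2\in\R^n$ satisfy $\pi_H(x_1) = \pi_H(x_2)$, one has $\nabla w^+(x_1) - \nabla w^+(x_2) = \kappa(x_1 - x_2)$ and thus $\|\nabla w^+(x_1) - \nabla w^+(x_2)\| = \kappa\|x_1 - x_2\|$.

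To conclude, let $V$ be a bounded open set with $\overline V\subset \R^n\setminus\Omega$, and fix any $x_0\in V$. The perpendicular line $L\triangleq\{x_0+s\nu:s\in\R\}$ meets $V$ in a nonempty bounded open subset of $L$, so the connected component of $L\cap V$ containing $x_0$ is an open interval whose two distinct endpoints $x_1,x_2$ lie in $L\cap\partial V$ and share the same $\pi_H$-projection. By Proposition~\ref{pro:halAB},
\[
 \Gamma^1(W^+;\partial V) \;\ge\; B_{x_1,x_2}(W^+) \;=\; \frac{\|\nabla w^+(x_1) - \nabla w^+(x_2)\|}{\|x_1 - x_2\|} \;=\; \kappa.
\]
Combined with $\Gamma^1(W^+;V) \le \Gamma^1(W^+;\R^n) = \kappa$ and $\Gamma^1(W^+;\partial V) \le \Gamma^1(W^+;\overline V) = \Gamma^1(W^+;V)$ (via \cite[Proposition~2.10]{ELG1}), all four quantities coincide, yielding $\Gamma^1(W^+;V) = \Gamma^1(W^+;\partial V)$, i.e.\ the AMLE property.
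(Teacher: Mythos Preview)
Your overall strategy coincides with the paper's: pick two points $x_1,x_2\in\partial V$ with $x_1-x_2\perp H$ and show that $\Lip(\nabla w^+;x_1,x_2)=\kappa$, which forces $\Gamma^1(W^+;\partial V)=\kappa=\Gamma^1(W^+;V)$. Your justification for the existence of such $x_1,x_2$ (via the connected component of the perpendicular line in $V$) is in fact more careful than the paper's, which simply asserts that such points exist.

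The difference lies in how the key gradient identity is obtained. You route everything through the global factorization $(\star)$, which you rightly flag as the main obstacle. The paper bypasses $(\star)$ entirely. For each finite $P\subset\Omega$ it observes that, because every $\widetilde p\in H$, the set $S_*^+$ is invariant under translations in the direction $\nu$; hence so is each tile $T_S^+=\tfrac12(\widehat S^+ + S_*^+)$, and therefore $x_1,x_2$ lie in one and the same tile $T_S^+$. Wells' \cite[Lemma~21]{Well1} then gives $\|\nabla w_P^+(x_1)-\nabla w_P^+(x_2)\|=\kappa\|x_1-x_2\|$ immediately. The passage from finite $P$ to $\Omega$ is handled by the gradient convergence in \cite[Proposition~3]{Well1}, together with $\kappa_P\to\kappa$. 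Your route~(b) would succeed and essentially contains this argument (indeed, the tile formula $w_S^+(x)=d_S(S_C^+)+\tfrac{\kappa}{2}d^2(x,S_H^+)-\tfrac{\kappa}{2}d^2(x,S_E^+)$ with $S_H^+\subset H$ and $S_E^+$ a $\nu$-cylinder yields $(\star)$ for finite $P$, and the infimum preserves the form), but the paper's version is more economical since it only needs the gradient relation at the two specific boundary points and never the full factorization. What your formulation buys is a cleaner conceptual picture of $w^+$ under the hypothesis, at the cost of proving more than is needed.
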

\begin{proof}
For brevity let us denote $W^+(F,\Omega)$ by $W^+$. We prove that $W^+$ is an AMLE of $F$ on $\R^n$. Put $\kappa=\Gamma^1(F;\Omega)$. \\
From Theorem \ref{theo:tqwells}, we have $W^+$ to be an MLE of $F$ on $\R^n$. \\
Let $V\subset \R^n\backslash A$. We need to prove  $\Gamma^1(W^+;V)=\Gamma^1(W^+;\partial V)$. Indeed, the inequality $\Gamma^1(W^+;V)\ge\Gamma^1(W^+;\partial V)$ is clear, so that we only need to prove that $\Gamma^1(W^+;\partial V)\ge\Gamma^1(W^+;V)$. We have $\kappa=\Gamma^1(W^+;\cR^n)\ge \Gamma^1(W^+;V)\ge\Gamma^1(W^+;\partial V)\ge \Lip(\nabla w^+,\partial V)$, so that it suffices to show that $\Lip(\nabla w^+,\partial V)\ge \kappa$. \\
Let $x_0,y_0\in \partial V$, $x_0\ne y_0$ such that $(x_0-y_0)$ is perpendicular to the hyperplane $H$. 
\begin{figure}[ht]
\begin{center}
\includegraphics[scale=0.5]{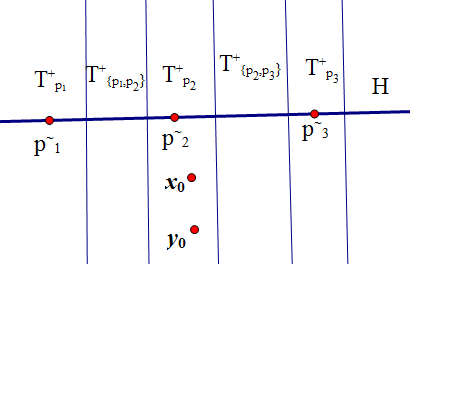}\\
\end{center}
\caption{}
\end{figure}\\
Let $\mathcal{P}$ be the set of all finite subsets of $\Omega$. For any $P\in\cP$, we have the corresponding function $W^+(F,P,\kappa)$ (defined in Appendix \ref{con:well+}) or $W^+_P$ for short. We define $K^+$ and $T^+_S$ for $S\in K$ as in Appendix \ref{con:well+} with the corresponding definition for the finite set $A=P$. Put $\kappa_P=\Gamma^1(W^+_P;\cR^n)$. Since $\widetilde \Omega=\{p-\frac{D_pf}{\kappa}:p\in \Omega\}$ is a subset of $H$ and $(x_0-y_0)$ is perpendicular to the hyperplane $H$, there exist $S\in K^+$ such that $x_0,y_0\in T^+_S$. Applying \cite[Lemma 21]{Well1}, we have
\bqq
\|\nabla w_P^+(x_0)-\nabla w_P^+(y_0)\|=\kappa_P\|x_0-y_0\|.
\eqq
Moreover, for any $\eps>0$, there exists $P\in\cP$ such that 
$\|\nabla w_P^+(x_0)-\nabla w^+(x_0)\|\le \eps$, $\|\nabla w_P^+(y_0)-\nabla w^+(y_0)\|\le \eps$ (by [\cite{Well1}, Proposition 3]) and $\kappa_P>\kappa-\eps$. Therefore 
\bqq
\Lip(\nabla w,\partial V)&\ge& \frac{\|\nabla w^+(x_0)-\nabla w^+(y_0)\|}{\|x_0-y_0\|}\\
&\ge & -\frac{\|\nabla w^+(x_0)-\nabla w_P^+(x_0)\|}{\|x_0-y_0\|}-\frac{\|\nabla w^+(y_0)-\nabla w_P^+(y_0)\|}{\|x_0-y_0\|}\\
&+&\frac{\|\nabla w_P^+(x_0)-\nabla w_P^+(y_0)\|}{\|x_0-y_0\|}\\
&\ge& \frac{-2\eps}{\|x_0-y_0\|}+\kappa-\eps.
\eqq
Hence $\Lip(\nabla w,\partial V)\ge \kappa$.
\end{proof}

\section{Appendix}\label{chap 6}
\subsection{Recall the constructions of $w^+$.}\label{con:well+}
Let $A=\{p_1,...,p_m\}$ be a finite subset of $\mathbb{R}^n$, let $F\in\cF^1(A)$ and let $\kappa\ge\Gamma^1 (F;A)$.\\
For $p\in A$ we define:
\bqq
\widetilde p^+ &\triangleq& p-D_pf/\kappa,\hfill\\
d_p^+(x)&\triangleq& f_p-\frac{1}{2}(D_pf)^2/\kappa+\frac{1}{4}\kappa\|x-\widetilde p^+\|^2.
\eqq
When $S\subset A$ we define
\bqq
{d_S}^+(x) &\triangleq& \mathop {\inf }\limits_{p \in S} {d_p}^+(x),\\
\widetilde S^+ &\triangleq& \{\widetilde p^+:p \in S\},\\
\widehat  S^+&\triangleq&\text{ convex hull of } \widetilde S^+,\\
S_H^+&\triangleq&\text{smallest hyperplane containing } \widetilde S^+,\\
S_E^+&\triangleq&\{x:d_p^+(x)=d_{p'}^+(x) \text{ for all } p,p' \in S\},\\
S_{*}^+&\triangleq&\{x:d_p^+(x)=d_{p'}^+(x)\le d_{p''}^+(x) \text{ for all }p,p'\in  S,p''\in A\},\\
K^+&\triangleq&\{S:S\subset A \text{ and for some } x\in S_{*}^+,d_S^+(x)<d_{A-S}^+(x)\}.
\eqq
\begin{proposition}[\cite{Well1}, Lemma 3]\label{pr:cw+1} Let
$S^+_C=S^+_E\cap S^+_H$ for $S\in K^+$ then $S^+_C$ is a point. 
\end{proposition}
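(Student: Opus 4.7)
The plan is to show that $S_E^+$ and $S_H^+$ are non-empty affine subspaces whose direction subspaces are orthogonal complements in $\mathbb{R}^n$, so that they meet in exactly one point. First I would expand, for any two $p, p' \in S$,
\begin{align*}
d_p^+(x) - d_{p'}^+(x) = (f_p - f_{p'}) - \tfrac{1}{2\kappa}\bigl((D_pf)^2 - (D_{p'}f)^2\bigr) + \tfrac{\kappa}{4}\bigl(\|x-\widetilde{p}^+\|^2 - \|x-\widetilde{p'}^+\|^2\bigr),
\end{align*}
and observe that the quadratic $\tfrac{\kappa}{4}\|x\|^2$ contributions cancel, so the right-hand side is affine in $x$ with gradient $-\tfrac{\kappa}{2}(\widetilde{p}^+ - \widetilde{p'}^+)$. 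Hence the condition $d_p^+(x) = d_{p'}^+(x)$ cuts out an affine hyperplane whose normal is $\widetilde{p}^+ - \widetilde{p'}^+$.

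Next, I would introduce the linear subspace $W := \mathrm{span}\{\widetilde{p}^+ - \widetilde{p'}^+ : p, p' \in S\}$ and fix some $p_0 \in S$. By definition of the affine hull, $S_H^+ = \widetilde{p_0}^+ + W$. From the previous step, the linear conditions defining $S_E^+$ annihilate exactly the direction $W^\perp$, so once a single point $x_0 \in S_E^+$ is produced, we obtain $S_E^+ = x_0 + W^\perp$. This is where the hypothesis $S \in K^+$ enters: by definition it gives $S_*^+ \neq \emptyset$, and the inclusion $S_*^+ \subset S_E^+$ supplies such an $x_0$.

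Finally, I would invoke the orthogonal decomposition $\mathbb{R}^n = W \oplus W^\perp$. Writing $x_0 - \widetilde{p_0}^+ = u + u^\perp$ with $u \in W$ and $u^\perp \in W^\perp$, the vector $\widetilde{p_0}^+ + u = x_0 - u^\perp$ lies in both $S_H^+$ and $S_E^+$, so $S_C^+$ is non-empty; uniqueness follows because the difference of any two points of $S_C^+$ lies in $W \cap W^\perp = \{0\}$. The main obstacle is really only bookkeeping: one must verify that the $\tfrac{\kappa}{4}\|x\|^2$ terms in the $d_p^+$ truly cancel in all pairwise differences (so that $S_E^+$ is genuinely affine rather than a quadric) and that $S_E^+$ is non-empty, which is precisely the content of $S \in K^+$; once these are in place, the argument reduces to the elementary fact that two complementary orthogonal affine subspaces of $\mathbb{R}^n$ meet in a unique point.
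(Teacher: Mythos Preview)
Your argument is correct and is exactly the natural one: the pairwise differences $d_p^+-d_{p'}^+$ are affine with gradient proportional to $\widetilde{p}^+-\widetilde{p'}^+$, so $S_E^+$ is an affine subspace with direction $W^\perp$, while $S_H^+$ has direction $W$, and non-emptiness of $S_E^+$ follows from $S\in K^+$ via $S_*^+\subset S_E^+$. The paper does not give its own proof of this proposition but simply cites \cite[Lemma~3]{Well1}; your reconstruction matches the standard argument used there.
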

\begin{definition}\label{def:T+}
For all $S\in K^+$, set 
\bqq
T^+_S\triangleq\{x:x=\frac{1}{2}(y+z) \text {for some }y\in \widehat S^+ \text { and }z\in S^+_* \}
\eqq
\end{definition}
\begin{proposition}[\cite{Well1}, Lemma 15,17]\label{pr:cw+2} We have
$\bigcup\limits_{S \in K^+} {{T^+_S}}  = {\mathbb{R}^n}$ and $({{T^+_S}}\cap { {T^+_{S'}}})^0=\emptyset$ if $S\ne S'$.
\end{proposition}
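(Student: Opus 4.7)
My plan is to leverage the orthogonal decomposition of $\R^n$ implicit in Proposition~\ref{pr:cw+1}. For distinct $p,p'\in S$, a direct computation using the explicit formula for $d_p^+$ yields $d_p^+(x)-d_{p'}^+(x)=\mathrm{const}+\tfrac{\kappa}{2}\langle x,\widetilde{p'}^+-\widetilde{p}^+\rangle$, so $S^+_E$ is the affine subspace whose normal directions span exactly the direction of $S^+_H$. Hence $S^+_H$ and $S^+_E$ are orthogonal affine subspaces meeting only at $S^+_C$, with $\dim S^+_H + \dim S^+_E = n$. Writing $P_H,P_E$ for the orthogonal affine projections onto $S^+_H,S^+_E$, every $x\in\R^n$ admits a unique decomposition $x=\tfrac{1}{2}(y+z)$ with $y\in S^+_H$ and $z\in S^+_E$, namely $y=2P_H(x)-S^+_C$ and $z=2P_E(x)-S^+_C$. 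In particular, $x\in T^+_S$ if and only if $y\in\widehat{S}^+$ and $z\in S^+_*$, and each $T^+_S$ is a full-dimensional convex body.

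For the covering claim, I would start from $x\in\R^n$ and consider the initial candidate $S_0\triangleq\{p\in A : d_p^+(x)=d_A^+(x)\}$, which lies in $K^+$ because $x\in S^+_{0*}$ and $d^+_{S_0}(x)<d^+_{A-S_0}(x)$ by construction. Using the orthogonal splitting I compute $y_0,z_0$ relative to $S_0$ and test whether $y_0\in\widehat{S_0}^+$ and $z_0\in S^+_{0*}$; if both hold we are done. Otherwise a standard separation argument in the relevant affine subspace produces a supporting hyperplane of $\widehat{S_0}^+$ or of $S^+_{0*}$, from which one reads off a site of $A$ whose addition to (resp.\ removal from) $S_0$ yields a new $S_1\in K^+$ for which the corresponding decomposition of $x$ is strictly closer to satisfying both containment constraints. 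Finiteness of $A$, combined with strict monotonicity of a suitably chosen combinatorial invariant, ensures termination at an $S\in K^+$ with $x\in T^+_S$.

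For the disjoint-interior claim, suppose $x$ lies in the interior of both $T^+_S$ and $T^+_{S'}$ with $S\neq S'$. By the uniqueness of the orthogonal splitting in the first paragraph, the two decompositions of $x$ must be expressed in two different orthogonal frames centred at $S^+_C$ and $S^+_{C'}$ respectively. Requiring the containments $y\in\widehat{S}^+$, $z\in S^+_*$ together with their analogues for $S'$ to persist on an open neighbourhood of $x$ forces the two systems of defining linear equations (those for $S^+_E$ and $S^+_{E'}$) to coincide on an open set, which in turn forces $\widetilde{S}^+=\widetilde{S'}^+$; combined with the uniqueness in $K^+$ of the strict-minimiser condition $d^+_S<d^+_{A-S}$ on a witness point, this yields $S=S'$, contradicting the hypothesis.

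The main obstacle I expect is the termination of the descent in the covering argument: without additional structure one might, in principle, cycle through candidate sets $S_k\in K^+$ without ever reaching a decomposition satisfying both containments simultaneously. Ruling this out is precisely where Wells's original combinatorial accounting of the face structure of the power diagram generated by $\widetilde{A}^+$ does most of the work.
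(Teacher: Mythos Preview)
The paper does not prove this proposition at all: it is stated as a direct citation of Wells~\cite{Well1}, Lemmas~15 and~17, with no accompanying argument. So there is no ``paper's own proof'' to compare against; any self-contained argument you give is necessarily beyond what the paper does.

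As to the content of your sketch: your first paragraph is correct and is exactly the geometric picture underlying Wells's construction---$S^+_H$ and $S^+_E$ are complementary orthogonal affine subspaces through $S^+_C$, and the map $x\mapsto(y,z)$ is an affine bijection. This is the right starting point.

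However, both of your main arguments have genuine gaps. For the covering, you yourself flag termination as the obstacle, and indeed your ``strict monotonicity of a suitably chosen combinatorial invariant'' is doing all the work without being specified; Wells's actual Lemma~15 proceeds quite differently, via an explicit inductive construction rather than a descent on candidate sets. For the disjoint-interior claim, the step ``requiring the containments \ldots\ to persist on an open neighbourhood of $x$ forces the two systems of defining linear equations to coincide'' is not justified: an interior point of $T^+_S$ gives that $y$ lies in the relative interior of $\widehat{S}^+$ and $z$ in the relative interior of $S^+_*$, but this says nothing directly about $S'^+_E$ or $S'^+_H$, which live in a different orthogonal frame centred at a different point $S'^+_C$. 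One cannot conclude $\widetilde{S}^+=\widetilde{S'}^+$ from openness alone; Wells's Lemma~17 uses additional structural lemmas about how the regions $T^+_S$ fit together along shared faces. Your outline captures the right geometry but would need substantial further work to become a proof.
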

\begin{definition}
$w^+_S(x)\triangleq d_S(S^+_C)+\frac{1}{2}\kappa d^2(x,S^+_H)-\frac{1}{2}\kappa d^2(x,S^+_E)$ for $S\in K^+$ and $x\in T^+_S$.
\end{definition}
\begin{definition}
$w^+(F,A,\kappa)(x)\triangleq w_S^+(x)$ if $x\in T^+_S$. 
\end{definition}
From \cite{Well1} we know that $w^+(F,A,\kappa)$ is well defined in $\cR^n$, $w^+(F,A,\kappa)\in \mathcal{C}^1(\cR^n,\R)$ and $\Lip(\nabla w^+(F,A,\kappa),\cR^n)\le \kappa$ .
\begin{theorem}\label{theo:W+} We have $w^+(F,A,\kappa)\in \mathcal{C}^{1,1}(\cR^n,\R)$ with $w^+(F,A,\kappa)(p)=f_p$, $\nabla w^+(F,A,\kappa)(p)=D_pf$ for all $p\in A$ and $\Lip(\nabla w^+(F,A,\kappa),\cR^n)\le \kappa$ .  

Further, if $g\in  \mathcal{C}^{1,1}(\cR^n,\R)$ with $g(p)=f_p$, $\nabla g(p)=D_pf$ when $p\in A$ and $\Lip(\nabla g,\cR^n)\le \kappa$, then $g(x)\le w^+(F,A,\kappa)(x)$ for all $x\in\R^n$.
\end{theorem}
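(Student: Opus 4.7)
Since the theorem essentially restates Wells's classical extension result \cite{Well1} for finite domains, my proof would follow Wells's argument. The paragraph preceding the theorem already records that $w^+ := w^+(F, A, \kappa)$ is well defined on $\R^n$, belongs to $\mathcal{C}^1(\R^n, \R)$, and satisfies $\Lip(\nabla w^+, \R^n) \le \kappa$; the first and third together give $w^+ \in \mathcal{C}^{1,1}(\R^n, \R)$. Only two items therefore remain: the interpolation conditions on $A$ and the maximality statement.

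For the interpolation, fix $p \in A$. The plan is to locate $p$ in a specific stratum $T^+_S$ and read off the value and gradient from the explicit formula for $w^+_S$. Taking the singleton $S = \{p\}$, one has $\widetilde{\{p\}}^+ = \widehat{\{p\}}^+ = \{p\}^+_H = \{p\}^+_C = \{\tilde p^+\}$ and $\{p\}^+_E = \R^n$, so the explicit formula reduces to
\[
w^+_{\{p\}}(x) = f_p - \frac{(D_pf)^2}{2\kappa} + \frac{\kappa}{2}\|x - \tilde p^+\|^2.
\]
Writing $p = \tfrac12(\tilde p^+ + (p + D_pf/\kappa))$, the inclusion $p + D_pf/\kappa \in \{p\}^+_*$ follows from expanding $d_p^+(p + D_pf/\kappa) - d_{p''}^+(p + D_pf/\kappa)$: after cancellation one obtains precisely the Wells inequality
\[
f_p - f_{p''} \le \frac{1}{2}\langle D_pf + D_{p''}f, p - p''\rangle + \frac{\kappa}{4}\|p - p''\|^2 - \frac{1}{4\kappa}\|D_pf - D_{p''}f\|^2,
\]
which is equivalent to $\Gamma^1(F; p, p'') \le \kappa$ and so holds by hypothesis. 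Evaluating at $x = p$ gives $w^+_{\{p\}}(p) = f_p$ and $\nabla w^+_{\{p\}}(p) = \kappa(p - \tilde p^+) = D_pf$. If $\{p\} \notin K^+$, I would instead pick any $S \in K^+$ with $p \in T^+_S$ (which exists by Proposition \ref{pr:cw+2}) and repeat the analogous computation.

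For the maximality, let $g \in \mathcal{C}^{1,1}(\R^n, \R)$ interpolate $F$ on $A$ with $\Lip(\nabla g, \R^n) \le \kappa$. The standard Wells inequality for $\mathcal{C}^{1,1}$ functions (quoted in the introduction) gives, for every $p \in A$ and every $y \in \R^n$,
\[
g(y) \le f_p + \frac{1}{2}\langle D_pf + \nabla g(y), y - p\rangle + \frac{\kappa}{4}\|y - p\|^2 - \frac{1}{4\kappa}\|D_pf - \nabla g(y)\|^2 = \Psi^+(F, y, p, \nabla g(y)).
\]
The remaining task is to aggregate these pointwise bounds, taken over $p$ in the unique $S \in K^+$ with $y \in T^+_S$, into $g(y) \le w^+_S(y)$. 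This identification uses the geometric meaning of $S^+_H$, $S^+_E$, and $S^+_*$ and is the combinatorial-geometric core of Wells's construction.

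The main obstacle is precisely this last identification: the piecewise definition of $w^+$ through the partition $\{T^+_S\}_{S \in K^+}$ obscures its pointwise envelope character, and the algebraic identities reconstructing $w^+_S$ from Wells's inequality over $S$ are delicate. I would defer to \cite{Well1} for the full technical core of the argument.
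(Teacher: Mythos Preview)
Your proposal is correct but takes a longer route than the paper. The paper's proof is purely a hypothesis-translation: it shows via Proposition~\ref{pro:halAB} that $\kappa\ge\Gamma^1(F;A)$ is equivalent to Wells's inequality
\[
f_y\le f_x+\tfrac{1}{2}\langle D_xf+D_yf,y-x\rangle+\tfrac{\kappa}{4}\|y-x\|^2-\tfrac{1}{4\kappa}\|D_xf-D_yf\|^2\quad(x,y\in A),
\]
and then cites \cite[Theorem~1]{Well1} in full for \emph{all} the conclusions (interpolation, $\mathcal{C}^{1,1}$, Lipschitz bound, and maximality). You instead re-derive pieces of Wells's argument---locating $p$ in $T^+_{\{p\}}$ and reading off values and gradients, then setting up the maximality bound via $\Psi^+$---before also deferring to \cite{Well1} for the combinatorial core. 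Both approaches are valid and both ultimately rest on Wells; yours has the merit of making the interpolation step self-contained and of exposing the link to the $\Psi^+$ machinery of Section~\ref{chap 3}, while the paper's buys brevity by treating Wells's theorem as a black box once the hypothesis is matched.
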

\begin{proof}
Applying Proposition \ref{pro:halAB}, we have $\Gamma^1(F;\Omega)\le \kappa$ if and only if 
\bqq
\sqrt{A^2_{x,y}(F)+B^2_{x,y}(F)}+A_{x,y}(F)\le \kappa,\text{ }\forall x,y\in \Omega ,
\eqq
This inequality is equivalent to
\bqq
\frac{B_{x,y}^2(F)}{2M_1}+A_{x,y}(F)\le \frac{\kappa}{2},
\eqq
and hence it is equivalent to 
\bqq
f_y\le f_x+\frac{1}{2}\langle D_x f +D_y f,y-x\rangle +\frac{1}{4}\kappa\|y-x\|^2-\frac{1}{4\kappa}\|D_xf-D_yf\|^2,\text{ for any }x,y\in\Omega.
\eqq
Using \cite[Theorem 1]{Well1}, we finish the proof of this theorem.
\end{proof}

\begin{corollary}\label{cor:well+}
In the case $\kappa=\Gamma^1(F;A)$, let $W^+(F,A,\kappa)$ be the 1-field associated to $w^+(F,A,\kappa)$ then $W^+(F,A,\kappa)$ is an over extremal extension of $F$ on $\R^n$.
\end{corollary}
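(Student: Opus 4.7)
The plan is to verify the two defining properties: that $W^+(F,A,\kappa)$ is a minimal Lipschitz extension of $F$ to $\R^n$, and that it dominates every other MLE pointwise. Everything required is already packaged in Theorem \ref{theo:W+}, so the proof will essentially be a repackaging using the canonical correspondence between $1$-fields on $\R^n$ and $\mathcal{C}^{1,1}$ functions.

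First I would check that $W^+$ is an extension. This is immediate from Theorem \ref{theo:W+}: since $w^+(p)=f_p$ and $\nabla w^+(p)=D_p f$ for every $p\in A$, the $1$-field $W^+$ canonically associated with $w^+$ satisfies $W^+(p)=F(p)$ for all $p\in A$.

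Second, I would show the Lipschitz constant is preserved, i.e.\ $\Gamma^1(W^+;\R^n)=\kappa$. The easy direction is $\Gamma^1(W^+;\R^n)\ge \Gamma^1(W^+;A)=\Gamma^1(F;A)=\kappa$. For the reverse inequality I would invoke the identity $\Gamma^1(W^+;\R^n)=\Lip(\nabla w^+;\R^n)$ (this is \cite[Proposition 2.4]{ELG1}, recalled at the start of Section \ref{chap 2}), combined with the bound $\Lip(\nabla w^+;\R^n)\le \kappa$ from Theorem \ref{theo:W+}. Hence $\Gamma^1(W^+;\R^n)=\kappa$ and $W^+$ is an MLE.

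Third, I would verify over-extremality. Let $G$ be any MLE of $F$ on $\R^n$ and let $g$ be its canonical associate. By the information/precision paragraph in Section \emph{Preliminaries}, $g\in\mathcal{C}^{1,1}(\R^n,\R)$ with $g(p)=f_p$ and $\nabla g(p)=D_p f$ for every $p\in A$, and again by \cite[Proposition 2.4]{ELG1}, $\Lip(\nabla g;\R^n)=\Gamma^1(G;\R^n)=\kappa$. The hypotheses of the second half of Theorem \ref{theo:W+} are therefore met, yielding $g(x)\le w^+(F,A,\kappa)(x)$ for all $x\in\R^n$. Since $G$ was an arbitrary MLE of $F$, this exhibits $W^+(F,A,\kappa)$ as the (unique) over extremal extension of $F$ on $\R^n$.

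There is no substantive obstacle; the corollary is really a translation of Theorem \ref{theo:W+} into the language of Definition \ref{def:quantrongnhat}. The only subtle point is being careful that the assumption $\kappa=\Gamma^1(F;A)$ is exactly what is needed so that any MLE $G$ of $F$ has $\Lip(\nabla g;\R^n)\le \kappa$, which in turn is precisely the hypothesis required to apply the maximality clause of Theorem \ref{theo:W+}.
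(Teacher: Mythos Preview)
Your proof is correct and follows exactly the same approach as the paper: invoke \cite[Proposition 2.4]{ELG1} to identify $\Gamma^1(W^+;\R^n)$ with $\Lip(\nabla w^+;\R^n)$, then read off both the MLE property and the over-extremality from the two clauses of Theorem \ref{theo:W+} and Definition \ref{def:quantrongnhat}. The paper compresses this into a single sentence, but the content is identical.
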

\begin{proof}
From  \cite[Proposition 2.4]{ELG1} we have $\Lip(\nabla w^+(F,A,\kappa),\cR^n)=\Gamma^1(W^+(F,A,\kappa),\cR^n)$. And so that the proof is immediate from Definition \ref{def:quantrongnhat} and Theorem \ref{theo:W+}. 
\end{proof}
\subsection{The constructions of $w^-$.}\label{con:well-}
By the same way, we can construct the function $w^-$ as follows.\\
Let $A=\{p_1,...,p_m\}$ be a finite subset of $\mathbb{R}^n$ and let $F\in\cF^1(A)$. Let $\kappa\ge\Gamma^1 (F;A)$.\\
For $p\in A$ we define:
\bqq
\widetilde p^- &\triangleq&p+D_pf/\kappa,\hfill\\
d_p^-(x)&\triangleq& f_p+\frac{1}{2}D_pf^2/\kappa-\frac{1}{4}\kappa\|x-\widetilde p^-\|^2.
\eqq
When $S\subset A$ we define
\bqq
{d_S^-}(x) &\triangleq& \mathop {\sup }\limits_{p \in S} {d_p^-}(x),\\
\widetilde S^- &\triangleq& \{\widetilde p^-:p \in S\},\\
\widehat  S^-&\triangleq&\text{ convex hull of } \widetilde S,\\
S_H^-&\triangleq&\text{smallest hyperplane containing } \widetilde S^-,\\
S_E^-&\triangleq&\{x:d_p^-(x)=d_{p'}^-(x) \text{ for all } p,p' \in S\},\\
S_{*}^-&\triangleq&\{x:d_p^-(x)=d_{p'}^-(x)\ge d_{p''}^-(x) \text{ for all }p,p'\in  S,p''\in A\},\\
K^-&\triangleq&\{S:S\subset A \text{ and for some } x\in S_{*}^-,d_S^-(x)>d_{A-S}^-(x)\}.
\eqq
\begin{proposition}\label{pr:cw-1} Set
$S^-_C\triangleq S^-_E\cap S^-_H$ for $S\in K^-$ then $S^-_C$ is a point. 
\end{proposition}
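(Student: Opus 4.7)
The plan is to deduce Proposition \ref{pr:cw-1} from the already-cited Proposition \ref{pr:cw+1} by means of a sign-reversal transformation on the $1$-field, which maps the ``$-$''-construction of Appendix \ref{con:well-} to the ``$+$''-construction of Appendix \ref{con:well+}.

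Concretely, I would define $\hat{F}$ on $A$ by $\hat{f}_p \triangleq -f_p$ and $D_p\hat{f} \triangleq -D_pf$ for each $p\in A$, so that $\hat{F}(x)(a) = -F(x)(a)$ for all $x \in A$, $a\in\R^n$. From the definition \eqref{dncuaGamma} one immediately gets $\Gamma^1(\hat{F};A)=\Gamma^1(F;A)\le \kappa$, so the hypotheses needed to run the ``$+$''-construction of Appendix \ref{con:well+} for $\hat{F}$ with the same constant $\kappa$ are satisfied.

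Next I would verify, by direct inspection of the formulas, that the ``$+$''-objects associated to $\hat{F}$ coincide with the ``$-$''-objects associated to $F$. Namely, $\widetilde{p}^+(\hat F) = p - D_p\hat f/\kappa = p + D_pf/\kappa = \widetilde{p}^-(F)$, and $d_p^+(\hat F)(x) = -d_p^-(F)(x)$. The first equality gives $\widetilde{S}^+(\hat F)=\widetilde{S}^-(F)$, hence $S_H^+(\hat F)=S_H^-(F)$ and $\widehat{S}^+(\hat F)=\widehat{S}^-(F)$. The second equality preserves the relation ``$=$'' while reversing ``$\le$'' and turning $\inf$ into $-\sup$, which yields $S_E^+(\hat F)=S_E^-(F)$, $S_*^+(\hat F)=S_*^-(F)$, and $K^+(\hat F)=K^-(F)$.

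Granting these identifications, for $S \in K^-(F)=K^+(\hat F)$ we obtain $S_C^-(F) = S_E^-(F)\cap S_H^-(F) = S_E^+(\hat F)\cap S_H^+(\hat F) = S_C^+(\hat F)$, which is a single point by Proposition \ref{pr:cw+1} applied to $\hat F$. I do not anticipate a real obstacle: the only items to double-check are the sign tracking in the definitions of $S_*^-$ and $K^-$ (to see that reversing $\le$ to $\ge$ and strict inequality direction line up correctly after negation), which is a routine bookkeeping step.
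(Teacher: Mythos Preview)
Your proposal is correct. The paper itself gives no explicit proof of Proposition~\ref{pr:cw-1}; Appendix~\ref{con:well-} simply prefaces the $w^-$ construction with ``By the same way'' and then states Propositions~\ref{pr:cw-1} and~\ref{pr:cw-2} and Theorem~\ref{theo:W-} without argument, leaving the reader to rerun Wells' proofs (cited for Proposition~\ref{pr:cw+1} as \cite[Lemma~3]{Well1}) with the appropriate sign changes.

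Your sign-reversal $F\mapsto \hat F$ makes this analogy into a one-line reduction rather than a repeated argument: it transports the entire ``$-$''-construction for $F$ to the ``$+$''-construction for $\hat F$, so that Proposition~\ref{pr:cw+1} applied to $\hat F$ yields the result directly. The bookkeeping you outline is correct; in particular $d_p^+(\hat F)=-d_p^-(F)$ turns $\inf$ into $-\sup$ and reverses the inequalities in the definitions of $S_*$ and $K$ exactly as needed. This is a slightly more economical route than the paper's implicit ``repeat the proof'', since it avoids reproving anything from \cite{Well1}.
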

\begin{definition}\label{def:T-}
For all $S\in K^-$, set
\bqq
T^-_S\triangleq\{x:x=\frac{1}{2}(y+z) \text { for some }y\in \widehat S^- \text { and }z\in S^-_* \}
\eqq
\end{definition}
\begin{proposition}\label{pr:cw-2} We have
$\bigcup\limits_{S \in K^-} {{T^-_S}}  = {\mathbb{R}^n}$ and $({{T^-_S}}\cap { {T^-_{S'}}})^0=\emptyset$ if $S\ne S'$.
\end{proposition}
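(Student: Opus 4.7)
The plan is to reduce Proposition \ref{pr:cw-2} directly to the already cited Proposition \ref{pr:cw+2} by a sign-flip duality between the $+$ and $-$ constructions.

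First, I would introduce the auxiliary $1$-field $\tilde F \in \cF^1(A)$ defined by $\tilde f_p \triangleq -f_p$ and $D_p\tilde f \triangleq -D_pf$ for $p \in A$. Using Proposition \ref{pro:halAB}, one checks $A_{a,b}(\tilde F) = -A_{a,b}(F)$ and $B_{a,b}(\tilde F) = B_{a,b}(F)$, so that
\[
\Gamma^1(\tilde F;a,b) = \sqrt{A_{a,b}(\tilde F)^2+B_{a,b}(\tilde F)^2} + |A_{a,b}(\tilde F)| = \Gamma^1(F;a,b),
\]
hence $\Gamma^1(\tilde F;A) = \Gamma^1(F;A) \leq \kappa$, so the same $\kappa$ is admissible for the $+$ construction applied to $\tilde F$.

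Next, I would verify that all the objects built from the $+$ recipe applied to $\tilde F$ coincide with the corresponding objects built from the $-$ recipe applied to $F$. A direct substitution gives
\[
\widetilde p^+(\tilde F) = p - D_p\tilde f/\kappa = p + D_pf/\kappa = \widetilde p^-(F),
\qquad
d_p^+(\tilde F)(x) = -d_p^-(F)(x),
\]
so that $d_S^+(\tilde F) = \inf_{p \in S} d_p^+(\tilde F) = -\sup_{p \in S} d_p^-(F) = -d_S^-(F)$. Consequently, $\widetilde S^+(\tilde F) = \widetilde S^-(F)$, $\widehat S^+(\tilde F) = \widehat S^-(F)$, $S_H^+(\tilde F) = S_H^-(F)$, and $S_E^+(\tilde F) = S_E^-(F)$. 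The condition $d_p^+ = d_{p'}^+ \leq d_{p''}^+$ for $\tilde F$ rewrites as $d_p^- = d_{p'}^- \geq d_{p''}^-$ for $F$, hence $S_*^+(\tilde F) = S_*^-(F)$; similarly $d_S^+(x) < d_{A-S}^+(x)$ for $\tilde F$ becomes $d_S^-(x) > d_{A-S}^-(x)$ for $F$, so $K^+(\tilde F) = K^-(F)$. Since $T_S$ depends only on $\widehat S$ and $S_*$, we also get $T_S^+(\tilde F) = T_S^-(F)$ for every $S$.

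Finally, invoking Proposition \ref{pr:cw+2} for $\tilde F$ yields
\[
\bigcup_{S \in K^+(\tilde F)} T_S^+(\tilde F) = \R^n
\quad \text{and} \quad
\bigl(T_S^+(\tilde F) \cap T_{S'}^+(\tilde F)\bigr)^0 = \emptyset \text{ for } S \neq S',
\]
which after the identifications above is exactly the claim for $F$. The only place requiring attention is the book-keeping of inequality reversals in the definitions of $S_*$ and $K$; once those are handled the proposition is automatic, so there is no substantive obstacle beyond making the dictionary precise.
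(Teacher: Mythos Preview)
Your proposal is correct and matches the paper's approach: the paper states Proposition~\ref{pr:cw-2} without proof, the entire $w^-$ construction being introduced with the phrase ``By the same way'', so the sign-flip duality $F\mapsto -F$ that you spell out is precisely the implicit argument the paper relies on. Your dictionary between the $+$ objects for $\tilde F$ and the $-$ objects for $F$ is accurate, and once it is in place Proposition~\ref{pr:cw+2} (itself quoted from Wells) transfers verbatim.
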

\begin{definition}
$w^-_S(x)\triangleq d_S(S^-_C)-\frac{1}{2}\kappa d^2(x,S^-_H)+\frac{1}{2}\kappa d^2(x,S^-_E)$ for $S\in K$ and $x\in T^-_S$.
\end{definition}
\begin{definition}
$w^-(F,A,\kappa)(x)\triangleq w_S^-(x)$ if $x\in T^-_S$. 
\end{definition}
\begin{theorem}\label{theo:W-} We have $w^-(F,A,\kappa)\in \mathcal{C}^{1,1}(\cR^n,\R)$ with $w^-(F,A,\kappa)(p)=f_p$, $\nabla w^-(F,A,\kappa)(p)=D_pf$ for all $p\in A$ and $\Lip(\nabla w^-(F,A,\kappa),\cR^n)\le \kappa$ .  

Further, if $g\in  \mathcal{C}^{1,1}(\cR^n,\R)$ with $g(p)=f_p$, $\nabla g(p)=D_pf$ when $p\in A$ and $\Lip(\nabla g,\cR^n)\le \kappa$, then $w^-(F,A,\kappa)(x)\le g(x)$ for all $x\in\R^n$.
\end{theorem}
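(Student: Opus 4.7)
The plan is to deduce Theorem \ref{theo:W-} directly from Theorem \ref{theo:W+} by applying the latter to the $1$-field $-F$. Define $-F$ on $A$ by $(-F)(p)(a) \triangleq -f_p + \langle -D_pf, a - p\rangle$, so that its associated data are $(-f)_p = -f_p$ and $D_p(-f) = -D_pf$. By Proposition \ref{pro:halAB} the quantity $\Gamma^1(\,\cdot\,;A)$ is invariant under the sign change $F \mapsto -F$, hence $\Gamma^1(-F;A) = \Gamma^1(F;A) \le \kappa$, and Theorem \ref{theo:W+} applies to $-F$ with the same constant $\kappa$.

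First I would verify the term-by-term correspondence between the two constructions. Since $D_p(-f) = -D_pf$, the tilde-points satisfy $\widetilde p^{+}_{-F} = p - D_p(-f)/\kappa = p + D_pf/\kappa = \widetilde p^{-}_F$; consequently $\widetilde S^{+}(-F) = \widetilde S^{-}(F)$, $\widehat S^{+}(-F) = \widehat S^{-}(F)$ and the smallest affine hulls coincide: $S_H^{+}(-F) = S_H^{-}(F)$. The quadratics obey $d_p^{+}(x;-F) = -d_p^{-}(x;F)$, so infima turn into suprema: $S_E^{+}(-F) = S_E^{-}(F)$, $S_*^{+}(-F) = S_*^{-}(F)$, and the strict inequality defining $K^{+}$ flips to the one defining $K^{-}$, giving $K^{+}(-F) = K^{-}(F)$. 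The cells $T_S^{+}(-F) = T_S^{-}(F)$ match because they depend only on $\widehat S$ and $S_*$. Finally the piecewise formula yields $w_S^{+}(x;-F) = -w_S^{-}(x;F)$ on each $T_S$, and patching gives the pointwise identity
\begin{equation*}
w^{-}(F,A,\kappa)(x) \;=\; -\,w^{+}(-F,A,\kappa)(x), \qquad x \in \R^n.
\end{equation*}

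Then I would simply negate the three conclusions of Theorem \ref{theo:W+} applied to $-F$. The $\mathcal{C}^{1,1}$ regularity transfers immediately, and so do the interpolation conditions: $w^{-}(F,A,\kappa)(p) = -(-f_p) = f_p$ and $\nabla w^{-}(F,A,\kappa)(p) = -(-D_pf) = D_pf$ for $p \in A$. The Lipschitz bound is unaffected by a global sign change, so $\Lip(\nabla w^{-}(F,A,\kappa),\R^n) \le \kappa$. For the extremality, given any competitor $g \in \mathcal{C}^{1,1}(\R^n,\R)$ satisfying the hypotheses of the theorem, the function $h \triangleq -g$ matches $-F$ on $A$ with $\Lip(\nabla h,\R^n) \le \kappa$; Theorem \ref{theo:W+} applied to $-F$ then gives $h(x) \le w^{+}(-F,A,\kappa)(x) = -w^{-}(F,A,\kappa)(x)$ for all $x \in \R^n$, which is precisely $w^{-}(F,A,\kappa)(x) \le g(x)$.

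The only real obstacle is the bookkeeping of the first step: one has to verify that each ingredient of the $w^{-}$ construction, namely $\widetilde S^{-}$, $\widehat S^{-}$, $S_H^{-}$, $S_E^{-}$, $S_*^{-}$, $K^{-}$, $T_S^{-}$ and $w_S^{-}$, is the literal image of its $w^{+}$-counterpart under $F \mapsto -F$. Each item depends only on the quadratics $d_p$ or on the points $\widetilde p$, so every check reduces to a one-line observation about how $\inf / \sup$ and the strict inequalities flip under $d_p \mapsto -d_p$; nevertheless the identifications must be carried out in the order above because later items reuse the earlier ones.
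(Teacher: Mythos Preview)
Your proof is correct. The duality $F\mapsto -F$ does exactly what you claim: each ingredient of the $w^{-}$ construction is the image of its $w^{+}$ counterpart under this sign change, the key relations being $\widetilde p^{+}_{-F}=\widetilde p^{-}_{F}$ and $d_p^{+}(x;-F)=-d_p^{-}(x;F)$, from which all the set equalities and the identity $w^{-}(F,A,\kappa)=-w^{+}(-F,A,\kappa)$ follow mechanically. Negating the conclusions of Theorem~\ref{theo:W+} for $-F$ then gives the statement.

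The paper takes a different, less explicit route: Section~\ref{con:well-} simply introduces the $w^{-}$ construction with the phrase ``By the same way'' and states Theorem~\ref{theo:W-} without proof, the implication being that one should rerun the argument of Theorem~\ref{theo:W+} (i.e.\ translate the hypothesis $\kappa\ge\Gamma^1(F;A)$ into the appropriate pointwise inequality and invoke Wells's original theorem) with all signs flipped. Your approach is cleaner in that it \emph{reduces} to the already--proved Theorem~\ref{theo:W+} rather than reproving it in parallel; the price is the bookkeeping paragraph matching the two constructions term by term, which you have correctly outlined. Both methods ultimately rest on Wells's result, so neither is more general, but yours makes the symmetry between $w^{+}$ and $w^{-}$ completely transparent.
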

\begin{corollary}
In the case $\kappa=\Gamma^1(F;A)$, let $W^-(F,A,\kappa)$ be the 1-field associated to $w^-(F,A,\kappa)$ then $W^-(F,A,\kappa)$ is an under extremal extension of $F$ on $\R^n$.
\end{corollary}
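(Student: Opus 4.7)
The plan is to mirror the proof of Corollary \ref{cor:well+} with the sign/direction of inequalities reversed, using Theorem \ref{theo:W-} in place of Theorem \ref{theo:W+}. First I would invoke \cite[Proposition 2.4]{ELG1} to identify the scalar $\Gamma^1$-constant of the canonically associated $1$-field $W^-(F,A,\kappa)$ on $\R^n$ with the Lipschitz constant of $\nabla w^-(F,A,\kappa)$:
\[
\Gamma^1(W^-(F,A,\kappa);\R^n) = \Lip(\nabla w^-(F,A,\kappa);\R^n).
\]
Combining this with the first conclusion of Theorem \ref{theo:W-}, namely $\Lip(\nabla w^-(F,A,\kappa);\R^n)\le \kappa = \Gamma^1(F;A)$, together with the trivial reverse inequality $\Gamma^1(W^-;\R^n)\ge \Gamma^1(F;A)$ (since $W^-$ extends $F$), gives $\Gamma^1(W^-;\R^n) = \Gamma^1(F;A)$, so $W^-(F,A,\kappa)$ is a MLE of $F$ on $\R^n$.

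Next I would verify the extremality inequality from Definition \ref{def:quantrongnhat}. Let $G$ be any MLE of $F$ on $\R^n$ and let $g$ be the canonically associated $\mathcal{C}^{1,1}$ map on $\R^n$ (as recalled in the ``Information and precision for the reader'' paragraph). Then $g(p)=f_p$ and $\nabla g(p)=D_pf$ for $p\in A$, and again by \cite[Proposition 2.4]{ELG1} we have $\Lip(\nabla g;\R^n) = \Gamma^1(G;\R^n) = \kappa$. The second part of Theorem \ref{theo:W-} applies verbatim and yields
\[
w^-(F,A,\kappa)(x) \le g(x), \qquad \forall x\in\R^n,
\]
which is precisely the under-extremal condition of Definition \ref{def:quantrongnhat}.

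There is no real obstacle here; the statement is the direct dual of Corollary \ref{cor:well+}, and both the Lipschitz/$\Gamma^1$ identification and the Wells-type comparison principle have been set up for the ``$-$'' side in Theorem \ref{theo:W-}. The only thing to be a bit careful about is that the comparison inequality in Theorem \ref{theo:W-} must be applied to an arbitrary MLE $G$ (not only to a $\mathcal{C}^{1,1}$ extension of prescribed values/gradients), which is legitimate because every MLE of $F$ on $\R^n$ gives rise to such a $\mathcal{C}^{1,1}$ extension with $\Lip(\nabla g;\R^n)\le \kappa$ by the canonical association discussed in the Preliminaries.
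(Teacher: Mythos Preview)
Your proof is correct and is exactly the dual of the paper's proof of Corollary \ref{cor:well+}: invoke \cite[Proposition 2.4]{ELG1} to identify $\Gamma^1(W^-;\R^n)$ with $\Lip(\nabla w^-;\R^n)$, and then read off both the MLE property and the under-extremal inequality directly from Definition \ref{def:quantrongnhat} and Theorem \ref{theo:W-}. The paper itself leaves this corollary unproved (it is stated immediately after Theorem \ref{theo:W-} as the obvious analogue of Corollary \ref{cor:well+}), so your write-up is in fact more detailed than what appears there.
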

\subsection{Domain infinite.}\label{sec:casedoinf}
Let $\Omega$ be a nonempty subset of $\cR^n$ and $F\in\cF^1(\Omega)$. Fix $\kappa=\Gamma^1(F;\Omega)$.\\
We call $\mathcal{P}$ to be the set of all finite subsets of $\Omega$.  Applying Theorem \ref{theo:W+} anh Theorem \ref{theo:W-}, for any $x\in \cR^n$, and for any $P,P'\in\mathcal{P}$ satisfying $P\subset P'$ we have  
$$w^-(F,P,\kappa)(x)\le w^-(F,P',\kappa)(x)\le w^+(F,P',\kappa)(x)\le w^+(F,P,\kappa)(x).$$
So that we can define $$w^+(F,\Omega)(x)=\inf\limits_{P\in \mathcal{P}}w^+(F,P,\kappa)(x),$$ and $$w^-(F,\Omega)(x)=\sup\limits_{P\in \mathcal{P}}w^-(F,P,\kappa)(x).$$
\begin{theorem}\label{theo:tqwells}Let $W^+(F,\Omega)$ be the 1-field associated to $w^+(F,\Omega)$ and let $W^-(F,\Omega)$ be the 1-field associated to $w^-(F,\Omega)$. Then $W^+(F,\Omega)$ is an over extremal extension of $F$ on $\R^n$ and $W^-(F,\Omega)$ is an under extremal extension of $F$ on $\R^n$.
\end{theorem}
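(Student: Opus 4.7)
The plan is to reduce the infinite-domain case to the finite-domain case handled in Theorem \ref{theo:W+}, by realizing $w^+(F,\Omega)$ as a monotone limit of the Wells constructions $w^+(F,P,\kappa)$ over finite $P\subset\Omega$. I will carry out the argument for $W^+(F,\Omega)$; the case of $W^-(F,\Omega)$ is symmetric.

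First I would justify the monotonicity stated just before the theorem: for finite $P\subset P'\subset\Omega$, the function $w^+(F,P',\kappa)$ is $C^{1,1}(\R^n,\R)$, equals $f_p$ at every $p\in P$ with gradient $D_pf$ (because $P\subset P'$), and has $\Lip(\nabla w^+(F,P',\kappa),\R^n)\le\kappa\ge\Gamma^1(F;P)$. So Theorem \ref{theo:W+} applied to the data $F|_P$ forces $w^+(F,P',\kappa)\le w^+(F,P,\kappa)$, and symmetrically $w^-(F,P,\kappa)\le w^-(F,P',\kappa)\le w^+(F,P',\kappa)$. This gives the pointwise chain used in the definitions and shows $w^+(F,\Omega)(x)$ is a well-defined infimum bounded below by $w^-(F,\{p_0\},\kappa)(x)$ for any fixed $p_0\in\Omega$.

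Second I would establish the regularity $w^+(F,\Omega)\in\mathcal{C}^{1,1}(\R^n,\R)$ with $\Lip(\nabla w^+(F,\Omega),\R^n)\le\kappa$. Since $\Omega\subset\R^n$ is separable, pick an increasing sequence $(P_n)$ of finite subsets whose union is dense in $\Omega$, and set $w_n\triangleq w^+(F,P_n,\kappa)$. The sequence $(w_n)$ is decreasing and bounded below, with each $\nabla w_n$ $\kappa$-Lipschitz on $\R^n$. Consequently the family $(w_n)$ is equicontinuous and locally equibounded; by Arzel\`a--Ascoli applied simultaneously to $(w_n)$ and $(\nabla w_n)$, every subsequence has a further subsequence converging in $C^1_{\mathrm{loc}}(\R^n,\R)$, necessarily to the monotone pointwise limit; hence the whole sequence converges in $C^1_{\mathrm{loc}}$ to a function $\tilde w$ with $\Lip(\nabla \tilde w,\R^n)\le\kappa$. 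A short density argument (using monotonicity and the fact that $\bigcup_n P_n$ is cofinal among finite subsets of $\Omega$ for the inf) identifies $\tilde w=w^+(F,\Omega)$.

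Third, for any $p\in\Omega$, once $p\in P_n$ we have $w_n(p)=f_p$ and $\nabla w_n(p)=D_pf$ by Theorem \ref{theo:W+}, so passing to the $C^1_{\mathrm{loc}}$ limit gives $w^+(F,\Omega)(p)=f_p$ and $\nabla w^+(F,\Omega)(p)=D_pf$. Thus $W^+(F,\Omega)$ is an extension of $F$, and Proposition~2.4 of \cite{ELG1} combined with $\Lip(\nabla w^+(F,\Omega),\R^n)\le\kappa$ yields $\Gamma^1(W^+(F,\Omega);\R^n)\le\kappa=\Gamma^1(F;\Omega)$, with the reverse inequality automatic since $W^+(F,\Omega)$ extends $F$. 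So $W^+(F,\Omega)$ is a MLE. For over-extremality, let $G$ be any MLE of $F$ on $\R^n$; for each finite $P\subset\Omega$ the canonical associate $g$ lies in $\mathcal{C}^{1,1}(\R^n,\R)$, agrees with $(f,Df)$ on $P$, and has $\Lip(\nabla g,\R^n)=\Gamma^1(G,\R^n)=\kappa$, so the extremality clause of Theorem \ref{theo:W+} gives $g(x)\le w^+(F,P,\kappa)(x)$. Taking the infimum over $P$ yields $g(x)\le w^+(F,\Omega)(x)$ on $\R^n$, which is the required extremality.

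The main obstacle is the second step: controlling the monotone limit of Wells's piecewise-quadratic patches to obtain a genuine $C^{1,1}$ function and recovering its gradient at the limit. The Arzel\`a--Ascoli argument coupled with the uniqueness of the pointwise (monotone) limit is what makes the whole net converge in $C^1_{\mathrm{loc}}$, and this is what transfers both the Lipschitz bound on $\nabla w^+(F,\Omega)$ and the interpolation identities $\nabla w^+(F,\Omega)(p)=D_pf$ from the finite stages to the limit.
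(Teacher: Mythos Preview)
The paper's own proof is a one-line citation to \cite[Theorem~2]{Well1}, where Wells already carries out the passage from finite to infinite data; so your proposal is not a different route so much as an attempt to unpack what Wells does, using only the finite-domain statement (Theorem~\ref{theo:W+}) as input. That is a reasonable and useful exercise, and your overall architecture---monotonicity, Arzel\`a--Ascoli along an increasing exhaustion, then the extremality clause of Theorem~\ref{theo:W+}---is the right one.

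There is, however, a genuine gap in your density step. When $\Omega$ is uncountable, $\bigcup_n P_n$ is merely dense in $\Omega$, so the finite subsets $P_n$ are \emph{not} cofinal in $\mathcal P$, and your sentence ``once $p\in P_n$'' fails for the generic $p\in\Omega$. Two things need to be repaired. First, to get the interpolation identities at \emph{every} $p\in\Omega$: you have them on the dense set $\bigcup_n P_n$ by construction, and since $\tilde w\in C^{1}$ while $x\mapsto f_x$ and $x\mapsto D_xf$ are continuous on $\Omega$ (both follow from $\Gamma^1(F;\Omega)<\infty$; in particular $\Lip(Df;\Omega)\le\Gamma^1(F;\Omega)$), the identities pass to the closure of $\bigcup_n P_n$ in $\Omega$, i.e.\ to all of $\Omega$. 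Second, to identify $\tilde w$ with $w^+(F,\Omega)$: the inequality $\tilde w\ge w^+(F,\Omega)$ is immediate, but the reverse is not a ``cofinality'' statement. Once you know $\tilde w\in C^{1,1}(\R^n)$ with $\Lip(\nabla\tilde w)\le\kappa$ and $\tilde w$ interpolates $(f,Df)$ on \emph{all} of $\Omega$, apply the extremality clause of Theorem~\ref{theo:W+} to each finite $P\subset\Omega$ to get $\tilde w\le w^+(F,P,\kappa)$, and then take the infimum over $P$. With these two fixes your argument goes through; the Arzel\`a--Ascoli step is fine (anchor equiboundedness of $\nabla w_n$ at a fixed $p_0\in P_1$, where $\nabla w_n(p_0)=D_{p_0}f$ for all $n$).
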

\begin{proof}
Using [\cite{Well1}, Theorem 2], the proof is similar as Theorem \ref{theo:W+} and Corollary \ref{cor:well+}. 
\end{proof}

\section{Acknowledgements}

E.L.G. and  T.V.P. are partially supported by the ANR (Agence Nationale de la
Recherche) through HJnet projet ANR-12-BS01-0008-01.

\end{document}